\documentclass{amsart}

\usepackage[main=english]{babel}
\usepackage[T1]{fontenc}

\usepackage{hyperref}

\usepackage[dvipsnames]{xcolor}
\hypersetup{
    colorlinks,
    linkcolor={blue!80!black},
    citecolor={green!50!black},
    urlcolor={blue!80!black}
  }

\usepackage{csquotes}
\usepackage{comment}
\usepackage{enumitem}
\setlist{  
  listparindent=\parindent,
  parsep=0pt,
}

\usepackage{graphicx}
\usepackage{caption}
\graphicspath{{../}{images/}{../images/}{../../images/}} 
\usepackage{wrapfig}

\usepackage{booktabs}
\usepackage{tabularx}
\usepackage{comment}

\usepackage{amsmath,amssymb,amstext,amsthm,bm,dsfont,mathtools}

\newtheorem{theorem}{Theorem}
\newtheorem{question}{Question}
\newtheorem{lemma}[theorem]{Lemma}
\newtheorem{corollary}[theorem]{Corollary}
\newtheorem{mprop}[theorem]{Proposition}
\newtheorem{mdef}[theorem]{Definition}
\newtheorem{mnot}[theorem]{Notation}
\newtheorem{mass}[theorem]{Assumption}

\theoremstyle{definition}
\newtheorem{remark}[theorem]{Remark}
\newtheorem{example}[theorem]{Example}

\newcommand{\LL}{\mathbb{L}^\infty}
\newcommand{\LLp}{\mathbb{L}^p}

\newcommand{\ind}{1\!\kern-1pt \mathrm{I}}

\newcommand{\newinf}{\mathop{\mathrm{inf}\vphantom{\mathrm{sup}}}}

\DeclareMathOperator*{\sconv}{\mathrm{co}_{\sigma}}
\DeclareMathOperator*{\co}{\mathrm{co}}

\newcommand{\pp}[0]{\ensuremath \mathcal{P}}
\newcommand{\pq}[0]{\ensuremath \mathcal{P}^{\mathcal{Q}}}
\newcommand{\q}[0]{\ensuremath \mathcal{Q}}
\newcommand{\rrr}[0]{\ensuremath \mathcal{R}}
\newcommand{\rr}[0]{\ensuremath \mathcal{R}}
\newcommand{\qq}[0]{\ensuremath \mathcal{Q}^{\mathcal{Q}}}
\newcommand{\f}[0]{\ensuremath \mathcal{F}}
\newcommand{\fq}[0]{\ensuremath \mathcal{F}^{\q}}
\newcommand{\he}[0]{\ensuremath \mathcal{H}^{\q}}
\newcommand{\hef}[0]{\ensuremath \mathcal{H}^{\q}_{\f}}

\newcommand{\compl}[1]{\ensuremath #1^{\mathsf{c}}}

\definecolor{mypurple}{rgb}{0.5,0,98}

\title[Can the $L^{1}$-$L^{\infty}$ duality be restored?]{Can the $L^1$-$L^\infty$ duality be restored for non-dominated families of probability measures?}

\author{Irene Klein, Georg Köstenberger}

\begin{document}
\begin{abstract}
The duality $L^{\infty}\simeq (L^{1})'$ frequently breaks down in the presence of model uncertainty, where a single reference measure $P$ is replaced by a non-dominated family of probability measures $\mathcal{P}$.
The unavailability of classical measure-theoretic and functional-analytic tools in this regime poses a significant obstacle to developing robust probabilistic frameworks.
We show that this duality can be restored for a broad class of robust statistical models by extending the underlying probability space. 
Specifically, on the extended model, the space $\mathbb{L}^{\infty}(\mathcal{P})$ of $\mathcal{P}$-quasi-surely bounded functions is isometrically isomorphic to the dual of the space of finite signed measures absolutely continuous with respect to at least one element of $\mathcal{P}$.
The proposed extension is canonical: it is the smallest $\mathcal{P}$-complete extension of the original $\sigma$-algebra for which $\mathbb{L}^{\infty}(\mathcal{P})$ is the dual of any normed space. 
Our assumptions encompass several prominent non-dominated settings, including infinite product measures, Gaussian processes, the Black-Scholes model with uncertain constant volatility and drift, robust binomial models, and, more generally, infinite sequences from any parametric model with almost surely estimable parameters.
Furthermore, we unify the existing frameworks of Cohen (2012) and Liebrich et al. (2022), demonstrating that our construction is equivalent to the capacity-based approach under mild assumptions satisfied by the aforementioned examples. 
Finally, we apply our theory to extend Kraft's (1955) characterization of strictly unbiased hypothesis tests to non-dominated cases.
\end{abstract}
\thanks{Georg Köstenberger is part of Research Unit 5381 of the German Research Foundation and is supported by the Austrian Science Fund (FWF) I 5485-N}
\keywords{$L^1$-$L^\infty$ duality, non-dominated model uncertainty, quasi-sure analysis, extension of $\sigma$-algebras, localization}
\subjclass[2000]{28A12, 60A10, 46E30, 91G80, 62G35}
\maketitle

\frenchspacing
\raggedbottom

\section{Introduction}
Let $(\Omega, \mathcal{F}, P)$ be a probability space. 
Many results in probability theory depend on the fact that $L^{\infty}(P)$ is the dual space of $L^{1}(P)$. 
If we pass to a robust statistical model, i.e., if we replace the single probability measure $P$ with a family of probability measures $\pp$, this duality can fail. 
In this case, $L^{\infty}(P)$ is replaced by $\LL(\pp)$, the space of $\pp$-\emph{quasi-surely} bounded measurable functions, and it is neither obvious, nor always true, that $\LL(\pp)$ is the dual space of a normed space. 
Experience tells us that some desirable properties of probability spaces can be enforced post-hoc by passing to a canonical extension of the original space. 
Hence, we ask:
\begin{question}\label{q}
  Is there a canonical way to extend or modify a statistical model $(\Omega,\mathcal F,\pp)$ so that the associated robust $\LL(\pp)$ space admits a 
  meaningful duality theory?

\end{question}
The main contribution of this paper is to provide a positive answer to this question for a large class of stochastic models (see Section \ref{sec:examples} for a list of examples). 
For this purpose, we introduce Hahn-localizability, a sufficient condition that allows one to reconstruct a robust analogue of the classical $L^1$-$L^\infty$ duality.
More precisely, we show that Hahn-localizable families of probability measures admit an isometric identification
\begin{equation}\label{eq:goal1}
  \mathrm{ca}(\pp)' \simeq \LL(\pp),
\end{equation}
where $\mathrm{ca}(\pp)$ denotes the space of finite signed measures, which are absolutely continuous with respect to at least one element of $\pp$ (equipped with the total variation norm), and $\mathrm{ca}(\pp)'$ denotes the dual space of $\mathrm{ca}(\pp)$. 
In the case of a single measure, $L^{1}(P)$ is isometrically isomorphic to $\mathrm{ca}(P)$, and hence \eqref{eq:goal1} should be thought of as a robust version of the $L^{1}$-$L^{\infty}$ duality. 

While Hahn-localizability itself seems to be a quite strong condition to impose, it turns out that a much weaker condition, which we call pre-Hahn-localizability, is satisfied by a large class of models (see Section \ref{sec:examples}).
Roughly speaking, pre-Hahn-localizability ensures that the model can be decomposed into measurable regions on which the measures are ``locally dominated''.
The answer to Question \ref{q} is provided by the so-called Hahn-extension, a canonical way of extending any pre-Hahn-localizable model into a Hahn-localizable one, which then in turn satisfies \eqref{eq:goal1}. 
 We show that this extension is compatible with the usual $\mathcal{P}$-completion of a model, in the sense that the $\mathcal{P}$-completion of a Hahn-localizable model is again Hahn-localizable (see Section \ref{sec:hahn-loc}). 
  Together with the minimality results established in Section \ref{sec:hahn-ext:min}, this implies that the $\mathcal{P}$-completion of the Hahn-extension is the smallest $\mathcal{P}$-complete extension of the original model for which $\LL(\pp)$ is the dual space of a normed space.

\subsection{Related Work and Contribution}
The study of robust statistical models dates back at least to Knight's famous notion of model uncertainty \cite{knight1921risk}. 
Such frameworks arise in a wide range of areas including mathematical finance \cite{Den-Mar}, \cite{Peng-gexp},   \cite{Den-Ker}, \cite{Hu-Peng}, \cite{Tev-Tor},  \cite{BN:15},  \cite{Bia-Bou},  \cite{Lar-Acc}, \cite{Neu-Nut}, \cite{blanch_carassus}, \cite{carassus2022pricing}, 
statistics \cite{huber-strassen}, \cite{hampel-book}, \cite{rieder-book}, \cite{huber-book}, decision theory \cite{gilboa1989141}, \cite{maccheroni}, stochastic control \cite{petersen}, \cite{parys}, \cite{yang} and robust optimization \cite{bertsimas-book} (see also \cite{rahimian_2022} for an extensive review).
The problem of robustness has been addressed by a large number of different approaches. 
For example, there are approaches based on non-linear expectations \cite{peng-2004}, \cite{chen-chen-davison-2005}, \cite{peng_2007}, \cite{denis_hu_peng_2010}, \cite{guangyan-peng-2010}, \cite{peng_Gaussian_2011}, \cite{peng_song_zhang_2014}, \cite{peng_nonlinearExpectation_book}, approaches relying on techniques of stochastic control and stochastic backward differential equations such as \cite{neufeld2017nonlinear, Peng2019, fns_2019, geuchen2022affine, CriensNiemann},  and \cite{criens2023jumps}, pathwise approaches such as \cite{davis2007range} or \cite{acciaio2016model}.

In \cite{liebrich-model-uncertainty:reverse-approach} Liebrich et al. have identified another approach that has been used somewhat implicitly in the literature: supported models.
This approach includes \cite{chau2024robust}, \cite{chau_2022_super_replication}, \cite{soner_touzi_zhang_2011}, \cite{amarante-2027-contracting}, 
\cite{bartl_kupper_neufeld_2020_pathwise_superhedging}, \cite{burzoni_frittelli_hou_pointwise_arbitrage_2019}, \cite{hou2018RobustPD}, \cite{mykland_2003}, 
\cite{blanch_carassus}. 
Related ideas have also been explored in \cite{cohen}. 
For applications in distributionally robust optimization, we refer to the extensive survey of Rahimian et al. \cite{rahimian_2022}, and the references therein. 
Robustness is also a core topic in modern statistics and data science.
We refer to the books \cite{hampel-book}, \cite{rieder-book}, and \cite{huber-book}, and the references therein. 
Moreover, some authors give up $\sigma$-additivity and work with additive set functions instead, see \cite{dubins-savage}, \cite{larsson2026completecharacterizationtestablehypotheses}, or \cite{bingham2010} for a history of this debate. 
A full survey of the vast literature on robust statistical models is beyond the scope of this paper, and we refer to the listed survey articles and books for more literature.

Our work is most closely related to works of Liebrich et al. \cite{liebrich-model-uncertainty:reverse-approach}, and Cohen \cite{cohen}. 
In fact, the name ``Hahn-localizability'' is an homage to Cohen's Hahn property. 
We show that pre-Hahn-localizability, the class (S) property of \cite{liebrich-model-uncertainty:reverse-approach}, and the Hahn property of \cite{cohen} are equivalent up to (mild) additional assumptions. 
These additional assumptions are satisfied in all positive examples considered in Section \ref{sec:examples}. 
We also provide an example of a non-pre-Hahn-localizable family of probability measures. 
In this case, our theory can still be used to compute the dual space of the proposed robust $L^{1}$ space.

While a full characterization of the robust $L^{1}$-$L^{\infty}$ duality has been derived in the seminal work of Liebrich et al. \cite{liebrich-model-uncertainty:reverse-approach} for the supported case, it is still an open question, whether many prominent models possess this property, yet alone, how to enforce it post-hoc.
This is the main contribution of this work. 
We show that any pre-Hahn-localizable model $(\Omega, \mathcal{F}, \pp)$ admits a canonical extension, the Hahn-extension, which satisfies \eqref{eq:goal1}. 
Moreover, we show that this extension is compatible with the $\mathcal{P}$-completion of the model, i.e., the $\pp$-completion of a pre-Hahn-localizable model is again pre-Hahn-localizable, and the Hahn-extension of a complete model is again complete. 
As a by-product of our theory we arrive at a measure theoretic characterization of the Dedekind completeness of $\LL$, similar to the classical case (see \cite{fremlin-3}).
Moreover, we provide direct, constructive proofs for some of the results in \cite{liebrich-model-uncertainty:reverse-approach}. 

As an application of our theory, we extend the characterization of strictly unbiased hypothesis test due to Kraft \cite{kraft} to non-dominated settings. 
This problem was already studied by Le Cam \cite{lecam}, who gave a characterization in terms of so called ``generalized tests'', which can in general not be explicitly computed in practice.
More recently Larsson et al. \cite{larsson2026completecharacterizationtestablehypotheses} gave a characterization in terms of finitely additive set function, instead of $\sigma$-additive measures as was the case in the orignial work of Kraft \cite{kraft}. 
Using our theory, we can provide a characterization of strictly unbiased hypothesis for a large class of models, which neither requires generalized tests nor non-$\sigma$-additive set functions.

\subsection{Organisation}
In Section \ref{sec:preliminaries} we briefly introduce notation and review robust $L^{\infty}$ theory.
In Section \ref{sec:hahn-loc} we introduce Hahn-localizability, discuss its basic properties, and explore its relation to the Hahn property of Cohen \cite{cohen} and the class (S) property of Liebrich et al. \cite{liebrich-model-uncertainty:reverse-approach} . 
Section \ref{sec:duality} contains a new direct, constructive proof of the fact that every Hahn-localizable family of probability measures satisfies \eqref{eq:goal1}. 
The Hahn-extension is introduced in Section \ref{sec:hahn_extension}. 
We establish its existence, uniqueness and  minimality. 
In Section \ref{sec:examples} we collect a number of examples to which our theory applies, and one example to which it does not.
In Section \ref{sec:kraft_testable_hyp}, we apply our theory to extend a classical characterization of unbiased hypothesis tests due to Kraft \cite{kraft}, and compare it to the classical results of Le Cam \cite{lecam} and the recent results of Larsson et al. \cite{larsson2026completecharacterizationtestablehypotheses}.
 In Section~\ref{sec:kraft_testable_hyp}. In the appendix, Section~\ref{binomNA}, we comment on No Arbitrage and our assumptions in the robust binomial model  for readers with an interest in mathematical finance.

\section{Preliminaries and Notation}\label{sec:preliminaries}

Let $(\Omega, \f)$ be a measurable space, and let $\pp$ be a family of probability measures on $\f$. 
We write  $\mathcal{N}_{\pp}=\{N\in\mathcal{F}\mid P(N)=0, \forall P\in\pp\}$, for the $\pp$ null sets and $\mathcal{Z}_{\pp} = \{Z \subseteq \Omega\mid \exists N\in \mathcal{N}_{\pp}: Z \subseteq N\}$ for the set of $\pp$ polars.
If $\pp = \{P\}$, we write $\mathcal{N}_{\pp} = \mathcal{N}_{P}$ and $\mathcal{Z}_{\pp} = \mathcal{Z}_{P}$. 
We write $\uplus$ to emphasize that a union is disjoint. 
Furthermore, we say that a property holds $\pp$-quasi-surely (or $\pp$-q.s. for short) if it holds outside of a polar set. 
If $\q$ is a second family of probability measures on $\f$, we write $\pp\ll\q$, if $\mathcal{N}_{\q} \subseteq \mathcal{N}_{\pp}$, and we write $\pp\lll\q$, if 
\begin{equation*}
  \forall P\in \pp:\exists Q\in \q: P\ll Q.
\end{equation*}
Clearly, $\pp\lll\q$ implies $\pp\ll\q$, but in general, these two notions are different.
If $\pp = \{P\}$ is a singleton we write $P\ll \q$ instead of $\{P\} \ll \q$. 
For $P\in \pp$, we denote with $\f^{P}$ the $P$-closure of $\f$, i.e., the set
\begin{equation}\label{eq:P-completion}
  \f^{P} = \{F\cup Z\mid F\in \f, Z\in \mathcal{Z}_{P}\}.
\end{equation}
The $\pp$-closure of $\f$ is defined as
\begin{equation*}
  \f^{\pp} = \bigcap_{P\in \pp} \f^{P}. 
\end{equation*}
Note that $\f^{\pp}$ is in general larger than $\sigma(\f\cup \mathcal{Z}_{\pp})$ (see Example 2.8 in \cite{cohen}). 
One can extend any measure $P\in \pp$ on $\f$ to a measure $P'$ on $\f^{P}$ (and hence on $\f^{\pp}$). 
Since this correspondence is one-to-one, we will slightly abuse notation and write $P$ for both measures, if not specified otherwise. 
We write $\co(\pp)$ for the convex hull of $\pp$, and $\sconv(\pp)$ for the $\sigma$-convex hull of $\pp$, i.e.,
\begin{equation*}
  \sconv(\pp) = \biggl\{ \sum_{k=1}^{\infty} \lambda_{k}P_{k}\, \Big|\,  0\leq \lambda_{k}\leq 1,  \sum_{k=1}^{\infty} \lambda_{k} = 1, P_{k}\in \pp \biggr\}. 
\end{equation*}

We denote with $\mathcal{L}^{0} = \mathcal{L}^{0}(\Omega, \f)$ the set of $\mathbb{R}$-valued $\f$-measurable functions on $\Omega$, and define $\mathcal{N} = \{f\in \mathcal{L}^{0}\mid f = 0, \pp\text{-q.s.}\}$.
Following \cite{denis_hu_peng_2010}, we set
\begin{equation*}
  \begin{split}
    \mathcal{L}^{\infty} &= \{ f\in \mathcal{L}^{0}\mid \exists M>0: |f|\leq M, \pp\text{-q.s.}\}, \\
    \LL & = \mathcal{L}^{\infty}/\mathcal{N}. 
  \end{split}
\end{equation*}
The space $\LL$ can be equipped with the norm
\begin{equation*}
  \|f\|_{\LL} = \inf\{ M>0\mid |f|\leq M, \pp\text{-q.s.}\}. 
\end{equation*}
If we want to emphasize the dependence of $\LL$ on $\f$ or $\pp$, we write $\LL(\f,\pp)$ or $\LL(\pp)$. 
Note that $\LL(\pp) = \LL(\co(\pp)) = \LL(\sconv(\pp))$, since $\pp$, $\co(\pp)$, and $\sconv(\pp)$ have the same polar sets. 
If $\pp = \{P\}$ is a singleton, then $\LL(\pp) = L^{\infty}(P)$.
Hence, the space $\LL(\pp)$ is a robust analogue of $L^{\infty}(P)$. 
In the case $\pp = \{P\}$, the pre-dual of $L^{\infty}(P)$ is given by $L^{1}(P)$. 
This allows us to define the weak* topology on $L^{\infty}(P)$.
It is natural to ask: Is there a weak* topology on $\LL$, or rather, what is the pre-dual of $\LL$? 

We denote with $\mathrm{ca} = \mathrm{ca}(\mathcal{F})$ the set of all finite signed measures on $\f$, and equip it with the total variation norm $\|\cdot\|_{TV}$. 
For $\mu \in \mathrm{ca}$ we denote with $|\mu|$ its total variation measure, i.e.,
\begin{equation*}
  |\mu|(F) = \sup \{\mu(E) - \mu(F\setminus E)\mid E\in \f, E \subseteq F\}.
\end{equation*}
Note that $\|\mu\|_{TV} = |\mu|(\Omega)$. 
For $\mu\in \mathrm{ca}$ and $P\in \pp$, we write $\mu\ll P$ if $|\mu|\ll P$. 
We write $\mathrm{ca}(P) = \{\mu \in \mathrm{ca}\mid \mu\ll P\}$ for the set of signed measures, which are absolutely continuous with respect to one $P\in \pp$.
Furthermore, we define 
\begin{equation*}
  \mathrm{ca}(\pp) = \bigcup_{P\in \pp} \mathrm{ca}(P) = \{\mu\in \mathrm{ca}\mid \exists P\in \pp \text{ with } \mu\ll P\},
\end{equation*}
the set of finite signed measures which are absolutely continuous with respect to one $P\in \pp$.
If $\pp$ is convex, then $\mathrm{ca}(\pp)$ is a topological vector space, and if $\pp$ is $\sigma$-convex, then $\mathrm{ca}(\pp)$ is a Banach space (see \cite{cuchiero2025quantitativehalmossavagetheoremsrobust} for the short proof). 
Moreover, if $\pp = \{P\}$, then $\mathrm{ca}(P)$ is isometrically ismorphic to $L^{1}(P)$. 
Hence, our candidate for a robust version of $L^{1}$ is $\mathrm{ca}(\pp)$ with the total variation norm. 

If $A$ is an arbitrary set, we write $\delta_{a}$ for the Dirac measure on $a\in A$.
For any normed space $X$, we denote with $X'$ its dual space.
If $Y$ is a second normed space, we write $X\simeq Y$ if they are isometrically isomorphic to each other. 
If there is no ambiguity which norm we are using on a normed space, we will simply write $\|\cdot\|$ for the norm. 

\section{Hahn-localizability and its main properties}\label{sec:hahn-loc}

In this section we introduce Hahn-localizability, show that Hahn-localizable families of probability measures have a meaningful duality theory, and discuss its relationship to existing approaches of Cohen \cite{cohen} and Liebrich et al. \cite{liebrich-model-uncertainty:reverse-approach}.
For the sake of readability, the more involved proofs are postponed to the end of the section. 
Let us start with the definition of Hahn-localizability.
\begin{mdef}\label{def:hahn-loc}
Let $(\Omega, \f)$ be a measurable space, and $\pp$ a family of probability measures on $\f$. 
The family $\pp$ is called pre-Hahn-localizable, if there exists a family of probability measures $\q$ on $\f$ such that
\begin{enumerate}
  \item for every $Q\in \q$ there is an $S_{Q}\in \f$ such that $Q(S_{R}) = \delta_{QR}$ for all $Q,R\in \q$,
\item $\q\lll \pp \lll \sconv(\q)$.  
\end{enumerate}
A pre-Hahn-localizable family of probability measures $\pp$ is Hahn-localizable, if for every family $\{E_{Q}\}_{Q\in \q}$ of sets $E_{Q}\in \f$ with $E_{Q}\subseteq S_{Q}$ there is an $S\in \f$ such that 
      \begin{enumerate}
        \item[a\emph{)}] $Q(E_{Q}\setminus S) = 0$ for all $Q\in \q$.
        \item[b\emph{)}] If for any set $F\in \f$ we have $Q(E_{Q}\setminus F) = 0$ for all $Q\in \q$, then $Q(S\setminus F) = 0$ for all $Q\in \q$.
      \end{enumerate}

\end{mdef}
We will call $\q$ a \emph{localization} of $\pp$, and the sets $S_{Q}$ supports (or support sets) of $Q\in \q$.
We say that a localization $\q$ has \emph{strictly disjoint} supports (or is strictly disjointly supported) if the supports $S_{Q}$ are pairwise disjoint. Note that pre-Hahn-localizability only requires $\pp$-q.s. disjoint supports.

Hahn-localizability is motivated by Lemma \ref{lemma:char-of-sigma-convex-dominated-families} below.
Note that $\LL(\pp) = \LL(\sconv(\pp))$, and hence it is sufficient to work with the set $\sconv(\pp)$ instead of $\pp$.
In other words, it would be sufficient to look at $\sigma$-convex families of probability measures. 
In this case, we can fully characterize the case that $\pp$ is dominated by a $\sigma$-finite measure in the next lemma.

\begin{lemma}\label{lemma:char-of-sigma-convex-dominated-families}
Let $(\Omega, \f, \pp)$ be a statistical model. 
If $\pp$ is $\sigma$-convex, then the following two statements are equivalent:
\begin{enumerate}
\item $\pp$ is dominated by a $\sigma$-finite measure.
\item There is a countable set $\q$ of pairwise singular probability measures, such that 
  \begin{equation*}
    \q\lll \pp\lll \sconv(\q). 
  \end{equation*}
\end{enumerate}
\end{lemma}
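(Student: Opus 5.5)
The easy direction is (2)$\Rightarrow$(1). Given a countable $\q=\{Q_1,Q_2,\dots\}$ with $\pp\lll\sconv(\q)$, I take $\mu=\sum_{k}2^{-k}Q_k$ (or $\mu=\sum_k Q_k$). This is a probability measure, hence $\sigma$-finite. If $P\in\pp$, there is $R=\sum_k\lambda_kQ_k\in\sconv(\q)$ with $P\ll R$. Moreover $R\ll\mu$, since $\mu(N)=0$ forces $Q_k(N)=0$ for every $k$. Hence $\pp\ll\mu$. Neither pairwise singularity nor $\q\lll\pp$ is needed for this direction.

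For (1)$\Rightarrow$(2), I use the Halmos--Savage theorem. If $\pp$ is dominated by a $\sigma$-finite measure, there is a countable subfamily $\{P_n\}\subseteq\pp$ with $\pp\ll\{P_n\}$. Since $\pp$ is $\sigma$-convex, $P_0:=\sum_n 2^{-n}P_n\in\pp$, and $P\ll P_0$ for every $P\in\pp$. So $\pp$ contains a dominating element $P_0$. The naive choice $\q=\{P_0\}$ already works: it is trivially pairwise singular and countable, $\q\subseteq\pp$ gives $\q\lll\pp$, and $P\ll P_0$ gives $\pp\lll\q\subseteq\sconv(\q)$. Because this is almost too simple, I would check the definitions once more. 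The relation $\lll$ only asks each $P$ to be dominated by a single element of the other family, which $P_0$ provides. So statement (2) does not force a nontrivial decomposition, and this direction collapses once a dominating measure lies inside $\pp$.

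The main step is therefore producing $P_0\in\pp$, that is, the Halmos--Savage reduction to a countable equivalent subfamily. If I cannot cite it, I would prove it directly. Let $\mu$ be a $\sigma$-finite dominating measure and replace it by an equivalent probability measure. Consider the sets $\{dP/d\mu>0\}$ for $P\in\sconv(\pp)=\pp$, and maximize their $\mu$-measure. Taking a countable supremizing sequence and its $\sigma$-convex combination gives a $P_0\in\pp$ whose support $C_0=\{dP_0/d\mu>0\}$ has maximal $\mu$-measure. Now suppose some $P\in\pp$ satisfied $\mu(\{dP/d\mu>0\}\setminus C_0)>0$. Then $\tfrac12(P+P_0)\in\pp$ would have strictly larger support, contradicting maximality. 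Hence every $P$ is concentrated on $C_0$ up to a $\mu$-null set. Since $P_0\sim\mu$ on $C_0$, it follows that $P\ll P_0$.

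If a genuinely nontrivial localization were intended, one could also split $P_0$ along any countable measurable partition $\{A_k\}$ with $P_0(A_k)>0$. Set $Q_k=P_0(\cdot\cap A_k)/P_0(A_k)$. Each $Q_k\ll P_0\in\pp$, the $Q_k$ are pairwise singular, and $P_0=\sum_k P_0(A_k)Q_k\in\sconv(\q)$. This variant is consistent with the above but is not needed.
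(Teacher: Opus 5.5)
Your proof is correct, and for (1)$\Rightarrow$(2) it takes a genuinely different and shorter route than the paper.

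\textbf{The paper's route.} It decomposes $\Omega$ into disjoint sets $S_j$ of finite $\mu$-measure. It applies Halmos--Savage separately to each family of normalized restrictions $P(S_j\cap\cdot)/P(S_j)$. It then assembles pairwise singular measures $Q_j$ concentrated on the $S_j$, with $Q_j\ll\sum_k 2^{-k}P_k^j\in\pp$ and $P\ll\sum_j 2^{-j}Q_j$.

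\textbf{Your route.} You apply Halmos--Savage once to obtain a dominating $P_0\in\pp$. Your exhaustion argument works equally well here: $\sigma$-convexity lets the usual proof reduce to maximizing $\mu(\{dP/d\mu>0\})$ over $\pp$ itself. The singleton $\q=\{P_0\}$ then satisfies (2), since pairwise singularity is vacuous and $\sconv(\{P_0\})=\{P_0\}$.

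\textbf{What each buys.} Your argument shows that, for $\sigma$-convex $\pp$, both conditions are equivalent to $\pp$ containing an element that dominates $\pp$. Conversely, from (2) one recovers such an element as $\sum_k 2^{-k}P_k$ with $Q_k\ll P_k\in\pp$. So the countable singular decomposition is not actually forced by the statement. Your route also avoids small loose ends in the paper's construction, such as indices $j$ with $P(S_j)=0$ for all $P\in\pp$. The paper's construction, in turn, produces a localization with the shape that motivates Hahn-localizability: several singular pieces with strictly disjoint support sets, the picture later generalized to uncountable $\q$. Your splitting variant recovers that shape as well.

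\textbf{The reverse direction.} Your proof of (2)$\Rightarrow$(1) is the same as the paper's. One small point: your parenthetical alternative $\sum_k Q_k$ is $\sigma$-finite only because the $Q_k$ are pairwise singular. Your remark that singularity is not needed therefore applies to the weighted sum $\sum_k 2^{-k}Q_k$ only.
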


Hahn-localizability now simply drops the assumption that $\q$ is countable, but requires the ``supports'' of the measures in $\q$ to be ``localizable'', which is known to be a necessary and sufficient condition for the $L^{1}$--$L^{\infty}$ duality in the classical case of a single measure (see Theorem 243G in \cite{fremlin-2}). 

The localization property (properties (a) and (b) of Definition \ref{def:hahn-loc} essentially states that one can localize (in the sense of a localizable measure, see Definition 211G in \cite{fremlin-2}) along the support sets. 
If the set $\q$ is countable, and $\{E_{Q}\}_{Q\in \q}$ is a family of sets from $\f$, with $E_{Q}\subseteq S_{Q}$, then we can pick $S = \bigcup_{Q\in \q}E_{Q}$.
If $\q$ is not countable, then this choice of $S$ might not be measurable. 
Hence, localizability can be interpreted as the assumption that for every family $\{E_{Q}\}_{Q\in \q}$ as above there is a set $S\in \f$ that can be thought of as a measurable version of $\bigcup_{Q\in \q}E_{Q}$. 
Proposition \ref{thm:char:dedekind-complete} below provides an alternative interpretation: if $\{E_{Q}\}_{Q\in \q}$ is as above, then Hahn-localizability guarantees the existence of a set $S\in \f$ with
\begin{enumerate}
\item $\ind_{S}\geq \ind_{E_{Q}}$, $\pp$-q.s. for all $Q\in \q$,
\item for every other $S'\in \f$ with $\ind_{S'}\geq \ind_{E_{Q}}$ $\pp$-q.s. for every $Q\in \q$, we have $\ind_{S}\leq \ind_{S'}$ $\pp$-q.s.
\end{enumerate}
In other words, $\ind_{S}$ is the $\pp$-q.s. essential supremum of the family $\{\ind_{E_{Q}}\}_{Q\in \q}$. 
Hence, we will sometimes refer to the set $S$ as the ``essential supremum'' of the family $\{E_{Q}\}_{Q\in \q}$. 
\begin{mprop}\label{thm:char:dedekind-complete}
Let $\pp$ be a $\sigma$-convex, pre-Hahn-localizable family of probability measures on $(\Omega, \f)$.
Then the following are equivalent:
\begin{enumerate}
  \item $\pp$ is Hahn-localizable.
  \item $\LL(\pp)$ is Dedekind complete.
  \item For every $\pp$-q.s. uniformly bounded family $\{g_{Q}\}_{Q\in \q}$ of non-negative, measurable functions with $g_{Q}|_{\compl{S_{Q}}} = 0$ $\pp$-q.s., there is a $g\in \mathcal{L}^{\infty}(\pp)$ such 
 $g\ind_{S_Q} = g_{Q}\ind_{S_Q}$ $\pp$-q.s.
\end{enumerate} 

\end{mprop}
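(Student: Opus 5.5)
I will prove the cycle (1)$\Rightarrow$(3)$\Rightarrow$(2)$\Rightarrow$(1). Throughout I use two consequences of $\q\lll\pp\lll\sconv(\q)$. First, a set $N\in\f$ is $\pp$-polar iff $Q(N)=0$ for all $Q\in\q$. Second, a measurable function is $\geq 0$ $\pp$-q.s. iff it is $\geq 0$ $Q$-a.s. for every $Q$. Moreover, since $Q(S_Q)=1$, a $\pp$-q.s. statement restricted to $S_Q$ is equivalent to the corresponding $Q$-a.s. statement on $S_Q$ together with $R$-a.s. statements for $R\neq Q$; these latter are automatic because $R(S_Q)=0$.

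\emph{(1)$\Rightarrow$(3).} Let $\{g_Q\}$ be bounded by $M$ and nonnegative. I reduce to sets by dyadic level sets. For rational $t\in[0,M]$ put $E_Q^t=\{g_Q>t\}\cap S_Q\subseteq S_Q$. Hahn-localizability gives essential suprema $S^t\in\f$, and I define $g=\sup_{t\in\mathbb{Q}\cap[0,M]} t\,\ind_{S^t}$, which is measurable as a countable supremum. To check $g\ind_{S_Q}=g_Q\ind_{S_Q}$ q.s., by the remarks above it suffices to show $Q(S^t\triangle E^t_Q)=0$ for each $t$. One inclusion is property (a). For the other, apply (b) with $F=E_Q^t\cup \compl{S_Q}$: for every $R$ we have $R(E_R^t\setminus F)=0$, since for $R\neq Q$ the set $E^t_R$ lies in $S_R$ and $R(S_Q)=0$. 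Hence $Q(S^t\setminus F)=0$, that is, $Q(S^t\setminus E_Q^t)=0$. Then $g=g_Q$ $Q$-a.s., up to the usual rational-approximation argument: $g_Q=\sup_t t\ind_{\{g_Q>t\}}$.

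\emph{(3)$\Rightarrow$(2).} Take a family $\{f_i\}_{i\in I}\subseteq\LL$ bounded above by $h$. After adding a constant, I may assume $0\leq f_i\leq M$. Since $\pp$ is $\sigma$-convex, so is $\{Q\}$ restricted to $S_Q$, and each $Q$ is a single probability measure. Hence the classical essential supremum $g_Q$ of $\{f_i\ind_{S_Q}\}$ under $Q$ exists, realized as a countable supremum, and it is bounded by $M$. Apply (3) to obtain $g$. Then $g\geq f_i$ holds $Q$-a.s. on $S_Q$ for every $Q$, hence $\pp$-q.s. If $u\geq f_i$ q.s. for all $i$, then $u\geq g_Q$ $Q$-a.s. on $S_Q$, so $u\geq g$ q.s. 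Thus $g$ is the supremum in $\LL$, and the infimum case is symmetric.

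\emph{(2)$\Rightarrow$(1).} Given $\{E_Q\}$, Dedekind completeness yields a supremum $f$ of $\{\ind_{E_Q}\}$ in $\LL$ (the family is bounded by $1$). I set $S=\{f\geq 1\}$, or more robustly $S=\{f>1/2\}$, after first showing that $f$ is q.s. an indicator. To see this, note that $\min(f,1)$ and $\ind_{\{f>0\}}$-type modifications are again upper bounds. More precisely, $f\wedge \ind_{\{f\geq 1\}}\vee$ needs care, and the cleanest route is to prove $f=\ind_{E_Q}$ $Q$-a.s. on $S_Q$ for each $Q$. For this, observe that $u=f\ind_{\compl{S_Q}}+\ind_{E_Q}\ind_{S_Q}$ is again an upper bound, using $R(S_Q)=0$ for $R\neq Q$, so $f\leq u$ and hence $f\leq\ind_{E_Q}$ $Q$-a.s. on $S_Q$. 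Combined with $f\geq \ind_{E_Q}$, this gives equality. Then (a) is immediate. For (b), $\ind_F$ is an upper bound, so $f\leq\ind_F$ q.s., which gives $Q(S\setminus F)=0$.

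The main obstacle is the bookkeeping in translating between q.s. statements and per-$Q$ statements localized to $S_Q$. This is where $\pp\lll\sconv(\q)$ and the disjointness $Q(S_R)=\delta_{QR}$ enter, and it must be checked carefully, especially in the uniqueness step of (1)$\Rightarrow$(3).
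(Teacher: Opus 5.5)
Your proof is correct, but it runs the cycle in the opposite direction from the paper and is organized differently. The paper proves $1\Rightarrow2$ by invoking the duality Theorem~\ref{thm:duality} together with Theorem 5.3 of Liebrich et al.; that step is what uses convexity of $\pp$ and the classical $L^1(Q)$--$L^\infty(Q)$ duality. It proves $2\Rightarrow3$ by taking the order supremum $g$ of $\{g_Q\}$ and showing that otherwise $g_Q\ind_{S_Q}+g\ind_{\compl{S_Q}}$ would be a strictly smaller upper bound. It proves $3\Rightarrow1$ by feeding in $g_Q=\ind_{E_Q}$ and setting $E=g^{-1}(\{1\})$.

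Your $1\Rightarrow3$ is the level-set gluing the paper carries out inside the proof of Theorem~\ref{thm:duality}: essential suprema of $S_Q\cap\{g_Q\ge q\}$, with the same test set $F$ giving the reverse inclusion. You simply extract it and use it directly. Your $3\Rightarrow2$ builds order suprema in $\LL$ explicitly by gluing classical $Q$-essential suprema on the $S_Q$. Your $2\Rightarrow1$ is the paper's competitor argument specialised to indicators.

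What your route buys is that it is self-contained and elementary. It uses only $\mathcal{N}_{\pp}=\mathcal{N}_{\q}$ and $Q(S_R)=\delta_{QR}$, never the duality theorem or an external Banach-lattice result. So neither convexity nor $\sigma$-convexity of $\pp$ is actually needed, and you get a concrete description of suprema in $\LL$. The paper's route is shorter once Theorem~\ref{thm:duality} is available, and it shows that Dedekind completeness comes from $\LL$ being the dual of a Banach lattice.

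Three small repairs:
\begin{enumerate}
\item In $3\Rightarrow2$, a family that is only bounded above need not be bounded below, so ``adding a constant'' is not yet legitimate. First replace $f_i$ by $f_i\vee f_{i_0}$ for a fixed $i_0$; this does not change the set of upper bounds.
\item The sentence about $\sigma$-convexity of ``$\{Q\}$ restricted to $S_Q$'' is meaningless and should be dropped. The classical essential supremum under a single probability measure exists regardless.
\item In $2\Rightarrow1$, the detour about $f$ being globally an indicator can be deleted. The localized identity $f=\ind_{E_Q}$ $\pp$-q.s.\ on $S_Q$ already gives (a) for $S=\{f>1/2\}$, and $f\le\ind_F$ q.s.\ gives (b).
\end{enumerate}
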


Proposition \ref{thm:char:dedekind-complete} gives a characterization of the Dedekind-completeness of $\LL$ akin to Theorem 5.3 in \cite{liebrich-model-uncertainty:reverse-approach}.
We show that this is equivalent to a certain glueing property of non-negative random variables living on the support sets $S_{Q}$. 
This proposition is based on the close relationship between the class (S) property of \cite{liebrich-model-uncertainty:reverse-approach} and Hahn-localizability, which we explore in Section \ref{sec:preliminaries:hahn-loc-vs-(S)-property-and-hahn-property}. 
This connection allows us to apply results from \cite{liebrich-model-uncertainty:reverse-approach} to our case.

While being pre-Hahn-localizable is a fairly mild condition (similar to Cohen's Hahn property \cite{cohen} or the class (S) property of Liebrich et al. \cite{liebrich-model-uncertainty:reverse-approach}), the localization property seems to be a stronger assumption.
Luckily, we will show that any pre-Hahn-localizable family $\pp$ (with strictly disjointly supported localization) can be made Hahn-localizable in a canonical way via its \emph{Hahn-extension} (see Section \ref{sec:hahn_extension} for details).

The next theorem and its corollary establish that Hahn-localizable families of probability measures have a meaningful duality theory, in the sense that $\LL(\pp)$ is the dual space of $\mathrm{ca}(\pp)$.  

\begin{theorem}\label{thm:duality}
  Let $(\Omega,\f)$ be a measurable space, and $\pp$ a convex family of probability measures on $\f$.
  If $\pp$ is Hahn-localizable, then $\mathrm{ca}(\pp)'$ is isometrically isomorphic to $\mathbb{L}^{\infty}(\pp)$, via $l: \mathbb{L}^{\infty}(\pp) \to \mathrm{ca}(\pp)'$, 
  \begin{equation*}
    l: f \mapsto \biggl(\mu \mapsto \int_{\Omega}f d \mu\biggr).
  \end{equation*}
\end{theorem}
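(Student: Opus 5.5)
The proof has three parts: $l$ is well defined and isometric, it is surjective, and the surjectivity argument is where Hahn-localizability is used.

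\emph{Well-definedness and isometry.} If $f=0$ $\pp$-q.s. and $\mu\ll P$ for some $P\in\pp$, then $\int f\,d\mu=0$. Hence $l$ is well defined on equivalence classes, and clearly $|l(f)(\mu)|\le \|f\|_{\LL}\|\mu\|_{TV}$. For the reverse inequality, let $\varepsilon>0$. The set $A=\{|f|>\|f\|_{\LL}-\varepsilon\}$ is not $\pp$-polar, so some $P\in\pp$ has $P(A)>0$. Take
\begin{equation*}
  \mu=\mathrm{sgn}(f)\ind_A\,P/P(A).
\end{equation*}
Then $\|\mu\|_{TV}=1$ and $l(f)(\mu)\ge\|f\|_{\LL}-\varepsilon$. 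This shows $\|l(f)\|=\|f\|_{\LL}$.

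\emph{Surjectivity.} Let $\phi\in\mathrm{ca}(\pp)'$ and fix a localization $\q$ with supports $S_Q$. For each $Q\in\q$ we have $Q\ll P$ for some $P\in\pp$, so $\mathrm{ca}(Q)\subseteq\mathrm{ca}(\pp)$. Since $\mathrm{ca}(Q)\simeq L^1(Q)$, the restriction of $\phi$ to $\mathrm{ca}(Q)$ is represented by some $g_Q\in L^\infty(Q)$ with $\|g_Q\|_{L^\infty(Q)}\le\|\phi\|$. Replace $g_Q$ by $g_Q\ind_{S_Q}$; this changes nothing $Q$-a.s. because $Q(S_Q)=1$. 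It remains to glue the $g_Q$ into a single $g\in\mathcal{L}^\infty$ with $g\ind_{S_Q}=g_Q\ind_{S_Q}$ $\pp$-q.s. for every $Q$. Proposition~\ref{thm:char:dedekind-complete}(3) does this for non-negative families. Apply it separately to the bounded families $\{g_Q^+\}$ and $\{g_Q^-\}$ and subtract. This requires $\sigma$-convexity, whereas only convexity is assumed. That gap is harmless: $\LL(\pp)=\LL(\sconv(\pp))$, and the localization property (a),(b) involves only $\q$ and the sets $S_Q$, so $\sconv(\pp)$ is Hahn-localizable with the same $\q$. Moreover a $\pp$-q.s. statement is the same as an $\sconv(\pp)$-q.s. statement, so the glued $g$ is valid for $\pp$ as well. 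One could also glue directly with the essential supremum sets from Definition~\ref{def:hahn-loc}, applied to the level sets $\{g_Q>r\}$ for rational $r$, but invoking the Proposition is cleaner.

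\emph{Identification of $\phi$ with $l(g)$.} This is the step I expect to be the main obstacle. Let $\mu\in\mathrm{ca}(\pp)$ with $\mu\ll P$. From $P\lll\sconv(\q)$ we get $P\ll\sum_k\lambda_kQ_k$ for countably many $Q_k\in\q$. The supports $S_{Q_k}$ are pairwise $Q_j$-null, since $Q_j(S_{Q_k})=\delta_{jk}$, and therefore $\sum_k\lambda_kQ_k$-null. Define disjoint sets $D_k=S_{Q_k}\setminus\bigcup_{j<k}S_{Q_j}$. Then $\mu$ is concentrated on $\biguplus_k D_k$, so $\mu=\sum_k\mu|_{D_k}$ with convergence in total variation, and each $\mu|_{D_k}\ll Q_k$. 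By continuity of $\phi$ in the TV norm,
\begin{equation*}
  \phi(\mu)=\sum_k\phi(\mu|_{D_k})=\sum_k\int g_{Q_k}\,d\mu|_{D_k}=\sum_k\int g\,d\mu|_{D_k}=\int g\,d\mu.
\end{equation*}
The third equality holds because $g=g_{Q_k}$ $\pp$-q.s. on $S_{Q_k}\supseteq D_k$, and $\mu\ll P\in\pp$. The last equality is dominated convergence, using that $g$ is bounded $\pp$-q.s. Hence $\phi=l(g)$. The care needed is in checking that the null-set bookkeeping passes from the individual $Q_k$ to $\mu$ and back to $\pp$-q.s. statements, which is why $\mu\ll P$ for a single $P\in\pp$ is used throughout.
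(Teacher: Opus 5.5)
Your isometry argument is correct (the paper simply cites it from Lemma~3.4 of Cuchiero et al.). Your final identification step, disjointifying the $S_{Q_k}$ into the $D_k$ and summing in total variation, is also sound and matches the paper's decomposition $\mu=\sum_k\mu(\cdot\cap S_{Q_k})$. The gap is the gluing step. You obtain $g$ from Proposition~\ref{thm:char:dedekind-complete}(3), but the paper proves that proposition along $1\Rightarrow 2\Rightarrow 3$, and $1\Rightarrow 2$ is derived \emph{from} Theorem~\ref{thm:duality}: it uses $\mathrm{ca}(\pp)'\simeq\LL(\pp)$ together with Theorem~5.3 of Liebrich et al.\ to get Dedekind completeness. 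The only implication proved independently is $3\Rightarrow 1$, which points the wrong way. Your argument is therefore circular. The one place where Hahn-localizability (properties (a), (b) of Definition~\ref{def:hahn-loc}) has to do real work is handed to a result that presupposes the theorem. The $\pp$-q.s.\ boundedness of $g$, which your final dominated-convergence equality needs, also comes only from that circular citation.

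The route you mention in passing is exactly the paper's, but it is the substantive part and must be carried out.
\begin{itemize}
\item Choose representatives with $|g_Q|\le\|\phi\|$ everywhere. For $q\in\mathbb{Q}$ set $E_{q,Q}=S_Q\cap\{g_Q\ge q\}$, and let $E_q\in\f$ be the essential supremum of $\{E_{q,Q}\}_{Q\in\q}$. Put $h(\omega)=\sup\{q\in\mathbb{Q}\mid\omega\in E_q\}$; it is measurable because $\{h>a\}=\bigcup_{q>a}E_q$.
\item The key fact is $Q\bigl(E_{q,Q}\triangle(E_q\cap S_Q)\bigr)=0$ for every $Q$. Property (a) gives $Q(E_{q,Q}\setminus E_q)=0$.
\item For the other half, apply (b) with $F=\compl{(S_Q\setminus E_{q,Q})}$. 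Here $E_{q,Q}\setminus F=\varnothing$, and for $R\ne Q$ the set $E_{q,R}\setminus F\subseteq S_Q$ is $R$-null. Hence $Q\bigl((E_q\cap S_Q)\setminus E_{q,Q}\bigr)=Q(E_q\setminus F)=0$.
\item Let $H_Q$ be the countable union of these $Q$-null sets. Then $h=g_Q$ on $S_Q\setminus H_Q$. Since $H_Q\subseteq S_Q$, it is $\q$-null, hence $\pp$-polar because $\pp\lll\sconv(\q)$.
\item Boundedness must now be shown by hand. We have $|h|\le\|\phi\|$ $Q_k$-a.s.\ on each $S_{Q_k}$, so $P\ll\sum_k\lambda_kQ_k$ gives $|h|\le\|\phi\|$ $P$-a.s.\ for every $P\in\pp$.
\end{itemize}
After redefining $h$ on the polar set $\{|h|>\|\phi\|\}$, your identification step goes through verbatim.
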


If $\pp$ is not convex, Theorem \ref{thm:duality} can still be applied.
Since $\LL(\pp) = \LL(\co(\pp))$ we get the following corollary, which does not require the set of probability measures $\pp$ to be convex. 

\begin{corollary}\label{cor:duality}
Let $(\Omega, \f, \pp)$ be Hahn-localizable, then
\begin{equation*}
  \mathrm{ca}(\co(\pp))' \simeq \LL(\pp). 
\end{equation*}
\end{corollary}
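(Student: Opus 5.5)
The corollary should follow from Theorem \ref{thm:duality} applied to the convex family $\co(\pp)$, together with the identity $\LL(\pp) = \LL(\co(\pp))$ noted in Section \ref{sec:preliminaries}. The only thing to check is that Hahn-localizability passes from $\pp$ to $\co(\pp)$, with the same localization $\q$ and the same support sets $S_Q$.

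First, I verify pre-Hahn-localizability of $\co(\pp)$. Condition (1) of Definition \ref{def:hahn-loc} concerns only $\q$ and the sets $S_Q$, so it is unchanged. For condition (2), $\q\lll\pp$ gives $\q\lll\co(\pp)$, since $\pp\subseteq\co(\pp)$. For the other inclusion, take $P=\sum_{i=1}^n\lambda_iP_i\in\co(\pp)$. By $\pp\lll\sconv(\q)$, each $P_i\ll R_i$ for some $R_i\in\sconv(\q)$. Then $P\ll \sum_i\lambda_iR_i$, and this measure again lies in $\sconv(\q)$, because a finite convex combination of countable convex combinations is a countable convex combination. Hence $\co(\pp)\lll\sconv(\q)$.

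Next, the localization properties (a) and (b) are stated purely in terms of $\q$, the sets $S_Q$, and $\f$. They make no reference to $\pp$, so they carry over verbatim once the same $\q$ serves as the localization. Thus $\co(\pp)$ is Hahn-localizable.

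Applying Theorem \ref{thm:duality} to the convex Hahn-localizable family $\co(\pp)$ gives $\mathrm{ca}(\co(\pp))'\simeq\LL(\co(\pp))$. Finally, $\pp$ and $\co(\pp)$ have the same null sets, since a set is null for a convex combination exactly when it is null for each of its components. Therefore the quasi-sure equivalence classes, the space $\mathcal L^\infty$, and the norm coincide, and $\LL(\co(\pp))=\LL(\pp)$ isometrically. Composing the two identifications gives the claim.

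There is no real obstacle here. The only point requiring care is the $\sigma$-convexity bookkeeping in condition (2), namely that $\co(\sconv(\q))\subseteq\sconv(\q)$, which is immediate.
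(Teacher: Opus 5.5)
Your proposal is correct and follows the paper's argument: apply Theorem \ref{thm:duality} to the convex family $\co(\pp)$ and use $\LL(\pp)=\LL(\co(\pp))$. Your explicit check that $\co(\pp)$ is Hahn-localizable with the same localization $\q$ fills in a step the paper leaves implicit. The key point in that check is $\co(\sconv(\q))\subseteq\sconv(\q)$, and the localization conditions (a) and (b) do not refer to $\pp$.
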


It should be pointed out that the space $\mathrm{ca}(\co(\pp))$ is in general different from the set of ``supported signed measures'' used as the pre-dual of $\LL$ in \cite{liebrich-model-uncertainty:reverse-approach} (see Example \ref{ex:ca-vs-sca} for an example where these two spaces differ).
This is not a contradiction, since the pre-dual of a Banach space need not be unique.

Since our goal is to apply Hahn-localizability as a post-hoc completion of our model $(\Omega, \f, \pp)$ it is natural to ask, whether Hahn-localizability is compatible with the $\pp$-completion of $\f$. 
This is indeed the case; the $\pp$-completion of a pre-Hahn-localizable family is again pre-Hahn-localizable, and hence we can apply the Hahn-extension to it.
\begin{lemma}\label{lemma:pre-hahn-loc-implies-pre-hahn-loc-of-closure}
Let $(\Omega, \f, \pp)$ be pre-Hahn-localizable, then 
\begin{enumerate}
\item $\f^{\pp} = \fq$, and
\item $(\Omega, \f^{\pp}, \pp)$ is pre-Hahn-localizable. 
\end{enumerate}
Moreover, if $(\Omega, \f, \pp)$ is Hahn-localizable with strictly disjoint supports, then $(\Omega, \f^{\pp},\pp)$ is Hahn-localizable. 
\end{lemma}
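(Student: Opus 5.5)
For part (1) I will show both inclusions of $\f^{\pp} = \fq$, where $\fq = \bigcap_{Q\in\q}\f^{Q}$. First I record that the two families have the same polar sets. If $N\in\mathcal N_{\pp}$, then $N\in\mathcal N_{\q}$, because $\q\lll\pp$. Conversely, if $N\in \mathcal N_{\q}$, then each $P\in\pp$ is dominated by some $\sum_k\lambda_kQ_k\in\sconv(\q)$, which gives $P(N)=0$. Hence $\mathcal Z_{\pp}=\mathcal Z_{\q}$.

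For $\f^{\pp}\subseteq\fq$, take $A\in\f^{\pp}$ and $Q\in\q$, and pick $P\in\pp$ with $Q\ll P$. Then $A=F\cup Z$ with $F\in\f$ and $Z$ a $P$-polar, and $Z$ is also $Q$-polar. So $A\in\f^{Q}$.

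For $\fq\subseteq\f^{\pp}$, take $A\in\fq$ and $P\in\pp$. Let $P\ll \sum_k\lambda_kQ_k$. For each $k$ choose $F_k,G_k\in\f$ with $F_k\subseteq A\subseteq G_k$ and $Q_k(G_k\setminus F_k)=0$. Put $F=\bigcup_k (F_k\cap S_{Q_k})$ and $G=\bigcap_k (G_k\cup \compl{S_{Q_k}})$.

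The main obstacle is the verification for this choice, and I expect it to go as follows. Since $F_k\cap S_{Q_k}\subseteq A$ for every $k$, we get $F\subseteq A$. Since $A\subseteq G_k\subseteq G_k\cup\compl{S_{Q_k}}$ for every $k$, we get $A\subseteq G$. For each $j$, the set $G\setminus F$ is contained in $(G_j\setminus F_j)\cup \compl{S_{Q_j}}$ up to $S_{Q_j}$-relevant parts: a point of $S_{Q_j}$ lying in $G$ lies in $G_j$, and if it lies outside $F$ it lies outside $F_j$. Since $Q_j(\compl{S_{Q_j}})=0$, this gives $Q_j(G\setminus F)=0$. Hence $\sum\lambda_kQ_k(G\setminus F)=0$, so $P(G\setminus F)=0$ and $A\in\f^{P}$. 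Here I use only $Q(S_Q)=1$, not disjointness.

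For part (2) I keep the same $\q$, extended to $\f^{\pp}$, with the same supports $S_Q\in\f\subseteq\f^{\pp}$. The relation $Q(S_R)=\delta_{QR}$ is unchanged. Absolute continuity $Q\ll P$ persists for the completed measures, because null sets of $\f^{P}$ are subsets of $\f$-null sets. Indeed, if $\mu\ll\nu$ on $\f$, then the unique extensions to $\f^{\nu}\supseteq \f^{\mu}\cap\f^{\nu}$ satisfy the same relation. This also requires checking that every $Q\in\q$ is defined on $\f^{\pp}$, which holds by part (1) since $\f^{\pp}=\fq\subseteq\f^{Q}$. The same applies to $\sconv(\q)$. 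So $\q\lll\pp\lll\sconv(\q)$ still holds, and $(\Omega,\f^{\pp},\pp)$ is pre-Hahn-localizable.

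For the Hahn-localizable statement, let $\{E_Q\}$ with $E_Q\in\f^{\pp}$ and $E_Q\subseteq S_Q$ be given. I will choose $E'_Q\in\f$ with $E'_Q\subseteq S_Q$ and $Q(E_Q\triangle E'_Q)=0$. This is possible because $E_Q\in\f^{Q}$; after obtaining the approximating set, intersect it with $S_Q$. Applying Hahn-localizability on $\f$ to $\{E'_Q\}$ gives $S\in\f$. Property a) transfers because $Q(E_Q\setminus S)=Q(E'_Q\setminus S)=0$.

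For b), take $F\in\f^{\pp}$ with $Q(E_Q\setminus F)=0$ for all $Q$. I need an $\f$-set replacing $F$ simultaneously for all $Q$. Here strict disjointness of the supports is used. By part (1), for each $Q$ pick $F_Q\in\f$ with $Q(F\triangle F_Q)=0$. The set $F'=\bigcup_Q(F_Q\cap S_Q)$ need not be $\f$-measurable, so instead I apply localizability once more to the family $\{F_Q\cap S_Q\}$, obtaining $T\in\f$. Then $Q(E'_Q\setminus T)=0$, so b) gives $Q(S\setminus T)=0$. Finally, $T$ is $Q$-a.s. equal to $F$ on $S_Q$. To see this, apply b) for $T$ to the $\f$-set $F_Q\cup\compl{S_Q}$; strict disjointness ensures that this set contains $F_R\cap S_R$ for all $R\neq Q$. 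This yields $Q(S\setminus F)=0$.
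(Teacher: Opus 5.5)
Your proof is correct. For parts (1) and (2) it is essentially the paper's argument. The paper routes $\fq\subseteq\f^{\pp}$ through the identity $\fq=\f^{\sconv(\q)}$ (Lemma~\ref{lemma:completion-sigma-convex}) and then uses $P\ll R\in\sconv(\q)$; you inline the same idea with sandwich sets. Intersecting with $S_{Q_k}$ is harmless but unnecessary, since $F=\bigcup_k F_k$ and $G=\bigcap_k G_k$ already give $G\setminus F\subseteq G_j\setminus F_j$.

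The genuine difference is in the Hahn-localizable part. The paper takes the $\f$-essential supremum $S'$ of the $E'_Q$ and adjoins $Z=\bigcup_Q Z_Q$. It then checks b) for an arbitrary $F\in\f^{\pp}$ directly via Lemma~\ref{lemma:E/EQ}, i.e.\ $Q(S'\setminus E'_Q)=0$, which it proves by contradiction. You instead replace $F$ by an $\f$-measurable essential supremum $T$ of $\{F_Q\cap S_Q\}$. You then get $Q(T\setminus F_Q)=0$ by testing b) on $F_Q\cup\compl{S_Q}$.

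That test-set trick is a shorter, direct proof of Lemma~\ref{lemma:E/EQ}, but your second localization is not needed. Each $Q$ is already defined on $\f^{\pp}$, so testing b) for $S$ on $E'_Q\cup\compl{S_Q}$ yields $Q(S\setminus E'_Q)=0$. Hence $Q(S\setminus F)\le Q(S\setminus E'_Q)+Q(E'_Q\setminus F)=0$ for every $F\in\f^{\pp}$.

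Neither argument really uses strict disjointness. In yours, $R(S_Q)=0$ for $R\neq Q$ already makes $(F_R\cap S_R)\setminus(F_Q\cup\compl{S_Q})$ an $R$-null set. In the paper's, $S'$ itself, without adjoining $Z$, already satisfies a) and b).
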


\subsection{Relationship of Hahn-localizability to the class (S) and Hahn property}\label{sec:preliminaries:hahn-loc-vs-(S)-property-and-hahn-property}
The notion of pre-Hahn-localizability is closely related to the idea of the class (S) property introduced in \cite{liebrich-model-uncertainty:reverse-approach}.
In fact, we will show below, that these two definitions are almost equivalent. 
We briefly recall the definition of a supported measure in the sense of \cite{liebrich-model-uncertainty:reverse-approach}. 
\begin{mdef}\label{def:supported-measures}
Let $(\Omega, \f)$ be a measurable space, and let $\pp$ be a family of probability measures on $\f$. 
  \begin{enumerate}
  \item A measure $\mu$ on $\f$ is supported (with support $S$), if there is an $S\in \f$, such that
\begin{enumerate}
  \item $\mu(\compl{S}) = 0$, 
  \item if $\mu(N\cap S) = 0$ for some $N\in \f$, then $N\cap S$ is a $\pp$-polar. 
\end{enumerate}
\item A finite signed measure $\mu$ on $\f$ is supported, if its total variation measure $|\mu|$ is supported. We write
  \begin{equation*}
  \mathrm{sca(\pp)} = \{\mu\in \mathrm{ca}\mid \mu \text{ is supported and } \mu \ll \pp\}.
\end{equation*}
\item The family $\pp$ is of class (S) if there is a family $\rr$ of supported probability measures on $\f$, such that $\mathcal{N}_{\pp} = \mathcal{N}_{\rr}$.
  \end{enumerate}
\end{mdef}

Following \cite{liebrich-model-uncertainty:reverse-approach}, we call such a $\mathcal{R}$ a \emph{supported alternative}. 
At first glance, the class (S) property and pre-Hahn-localizability are very similar.
However, there are two important differences. 
First, in contrast to $\q$, the supported alternative $\mathcal{R}$ need not be $\pp$-q.s. disjointly supported a priori.
In Lemma 3.7 of \cite{liebrich-model-uncertainty:reverse-approach}, it is shown that a supported alternative exists, if and only if a $\pp$-q.s. disjointly supported alternative exists, i.e., there is a supported alternative $\mathcal{R}$, such that $S_{Q}\cap S_{R}$ is a $\pp$-polar whenever $R\neq Q$, where $S_{Q}$ and $S_{R}$ are the supports of $Q,R\in \mathcal{R}$ respectively. 

Second, the supported alternative $\mathcal{R}$ from Definition \ref{def:supported-measures} satisfies $\mathcal{R}\ll \pp \ll \mathcal{R}$, while the localization $\q$ from Definition \ref{def:hahn-loc} satisfies $\q\lll \pp \lll \sconv(\q)$. 
A natural question to ask is how these two notions of robust absolute continuity are related. 

It turns out that for $\sigma$-convex families of probability measures of class (S), these two notions are \emph{almost} equivalent. 
This is based on the following definition.
\begin{mdef}\label{P-positiveSQ}
  Let $\pp$ either be a pre-Hahn-localizable family with localization $\rrr$, and supports $\{S_R\}_{R\in\rrr}$, or of class (S) with $\rrr$ being a supported alternative with $\pp$-q.s. disjoint supports $\{S_R\}_{R\in\rrr}$. 
For any $\mu\in \mathrm{ca}(\pp)$, we set
\begin{equation*}
  \mathcal{R}(\mu) = \{R\in \mathcal{R}\mid |\mu|(S_{R})>0\}.
\end{equation*}
\end{mdef}

\begin{remark}\label{rem:R(Q)-at-most-countable}
Let $\rrr$ be as in Definition \ref{P-positiveSQ}.
By the very definition of $\mathcal{R}(\mu)$, we have $\mathcal{R}(\mu) = \mathcal{R}(|\mu|)$. 
Note that for any $\mu\in \mathrm{ca}(\pp)$, the set $\mathcal{R}(\mu)$ is at most countable.
Indeed, $\mathcal{R}(\mu)$ can be written as
\begin{equation*}
  \mathcal{R}(\mu) = \bigcup_{m\geq 1} \mathcal{R}_{m}(\mu),
\end{equation*}
where $\mathcal{R}_{m}(\mu) = \{R\in \mathcal{R}\mid |\mu|(S_{R})> 1/m\}$, and $|\mathcal{R}_{m}(\mu)| \leq \lceil |\mu|(\Omega)\rceil m$, since $|\mu|$ is a finite measure, and the $S_{R}$'s are $\pp$-q.s. disjoint. 
 In particular,  if $\mathcal{P}$ is pre-Hahn-localizable with localization $\mathcal{R}$, and hence $\pp\lll  \sconv(\mathcal{R})$, this implies that 
\begin{equation}\label{eq:sum_mu}
  \mu(F) = \sum_{R\in \mathcal{R}(\mu)} \mu(F\cap S_{R})
\end{equation}
for any $\mu\in \mathrm{ca}(\pp)$ and $F\in \f$. Moreover, for pre-Hahn-localizable $\mathcal{P}$ with localization $\mathcal{R}$,  each $P\in \pp$ is determined by its values on the sets $\{S_{R} \mid R\in \mathcal{R}(P)\}$, i.e.,
\begin{equation}\label{eq:prob1supports}P\bigl(\bigcup_{R\in \mathcal{R}(P)}S_{R}\bigr)=1.\end{equation}
Let us prove the statements in (\ref{eq:sum_mu}) and (\ref{eq:prob1supports}). 
Indeed, let $\mu\in\mathrm{ca}(\pp)$. Then there exists $P\in\mathcal{P}$ such that $|\mu|\ll P$. 
By assumption there is  $Q = \sum_{k=1}^{\infty} \lambda_{k}R_{k} \in \sconv(\rrr)$ such that $P \ll Q$, therefore $|\mu|\ll Q$.
Observe that $\mathcal{R}(\mu)\subseteq\{R_k\mid k\geq1\}$.
Indeed, if $R\notin\{R_k\mid k\geq1\}$ then $Q(S_R)=0$ and hence $|\mu|(S_R)=0$. 
Therefore $R\notin\mathcal{R}(\mu)$. 
Set $S = \bigcup_{R\in \mathcal{R}(\mu)}S_{R}\subseteq  \bigcup_{k\geq 1}S_{R_k}$.
We have that $Q((\bigcup_{k\geq 1}S_{R_k})^c)=0$ and hence $|\mu|(\compl{(\bigcup_{k\geq 1}S_{R_k})})=0$. 
Define $N=\{k\geq 1\mid R_k\notin\mathcal{R}(\mu)\}$.
Then it follows that
$$|\mu|(\compl{S})\leq |\mu|\biggl(\compl{\biggl(\bigcup_{k\geq 1}S_{R_k}\biggr)}\biggr)+\sum_{k\in N}|\mu|(S_{R_k})=0.$$
Now, obviously, for every $F\in\mathcal{F}$, we have that
$$\mu(F)=\mu(F\cap S)+\mu (F\cap \compl{S})=\mu(F\cap S),$$
as $\mu(\compl{S})=0$. 
But as $\mathcal{R}(\mu)$ is a countable family of $\pp$-q.s. disjoint sets (and therefore $\mu$-a.e. disjoint) we have that
$$\mu(F)=\mu(F\cap S)=\sum_{R\in \mathcal{R}(\mu)} \mu(F\cap S_{R}),$$
hence (\ref{eq:sum_mu}) holds.
 Clearly $P\in \mathrm{ca}(\pp)$. Apply the above considerations to $P\in\mathcal{P}$ and $F=\Omega$. Hence, for $S=\bigcup_{R\in \mathcal{R}(P)}S_{R}$ we have that $P(\compl{S})=0$, and hence $P(S) =1$.

\end{remark}

Observe that for the weaker condition  $\pp\ll \mathcal{R}\ll \pp$ in Definition~\ref{P-positiveSQ} it is not obvious that the equations (\ref{eq:sum_mu}) and (\ref{eq:prob1supports}) hold. Therefore in order to give an alternative characterization of  families $\mathcal{P}$ satisfying the class (S) property we need the additional assumption that $P\bigl(\bigcup_{R\in \mathcal{R}(P)}S_{R}\bigr) = 1$, in other words, every $P$ is determined by its values on the sets $\{S_{R} \mid R\in \mathcal{R}(P)\}$.
With this additional assumption, we can give an alternative characterization of  families $\mathcal{P}$ satisfying the class (S) property. 

\begin{lemma}\label{lemma:equivalence_(S)_preHahnloc}
Let $(\Omega, \f)$ be a measurable space, and let $\pp$ be a $\sigma$-convex family of probability measures on $\f$. 
The following are equivalent:
\begin{enumerate}
  \item $\mathcal{P}$ has the class (S) property and for all $P\in \pp$ we have $P \bigl(\bigcup_{Q\in \mathcal{R}(P)}S_{Q} \bigr)=1$, where $\mathcal{R}$ is a $\pp$-q.s. disjointly supported alternative of $\mathcal{P}$. 
  \item $\pp$ is pre-Hahn-localizable. 

\end{enumerate}
\end{lemma}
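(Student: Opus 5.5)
The plan is to prove the two implications separately. In both directions we take the localization and the supported alternative to be the same family, with the same support sets. Throughout, $\pp$ is $\sigma$-convex.

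\emph{(2)$\Rightarrow$(1).} Let $\q$ be a localization with supports $S_Q$.
\begin{enumerate}
\item \emph{Each $Q\in\q$ is supported with support $S_Q$.} We have $Q(\compl{S_Q})=0$ since $Q(S_Q)=1$. Suppose $Q(N\cap S_Q)=0$ and take $P\in\pp$. Then $P\ll \sum_k\lambda_kR_k$ for some $R_k\in\q$. For $R_k\neq Q$ we have $R_k(S_Q)=0$, and for $R_k=Q$ the measure of $N\cap S_Q$ vanishes by assumption. Hence $P(N\cap S_Q)=0$, so $N\cap S_Q$ is $\pp$-polar.
\item \emph{$\mathcal{N}_\pp=\mathcal{N}_\q$.} The inclusion $\mathcal{N}_\pp\subseteq\mathcal{N}_\q$ follows from $\q\lll\pp$. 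The inclusion $\mathcal{N}_\q\subseteq\mathcal{N}_\pp$ follows from $\pp\lll\sconv(\q)$.
\item \emph{The supports are $\pp$-q.s. disjoint.} For $Q\neq R$, the set $S_Q\cap S_R$ is null for every element of $\q$, so it lies in $\mathcal{N}_\q=\mathcal{N}_\pp$.
\item \emph{The condition $P\bigl(\bigcup_{Q\in\q(P)}S_Q\bigr)=1$.} This is exactly \eqref{eq:prob1supports} in Remark \ref{rem:R(Q)-at-most-countable}.
\end{enumerate}

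\emph{(1)$\Rightarrow$(2).} Let $\rrr$ be a $\pp$-q.s. disjointly supported alternative and set $\q=\rrr$.

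The first step checks $Q(S_R)=\delta_{QR}$. We have $Q(S_Q)=1$. For $R\neq Q$, the set $S_R\cap S_Q$ is polar and hence $Q$-null, because $\mathcal{N}_\pp=\mathcal{N}_\rrr$. Also $Q(S_R\setminus S_Q)\le Q(\compl{S_Q})=0$, so $Q(S_R)=0$.

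The second step shows $\pp\lll\sconv(\rrr)$. Fix $P\in\pp$. The set $\rrr(P)$ is countable by the counting argument of Remark \ref{rem:R(Q)-at-most-countable}, which uses only q.s. disjointness and finiteness of $P$. It is nonempty because of the assumption $P\bigl(\bigcup_{R\in\rrr(P)}S_R\bigr)=1$. Let $Q=\sum_k\lambda_kR_k$, with strictly positive weights, enumerate $\rrr(P)$. Suppose $Q(N)=0$. Then $R_k(N\cap S_{R_k})=0$ for each $k$, so by the support property $N\cap S_{R_k}$ is polar and $P(N\cap S_{R_k})=0$. Combining this with $P\bigl(\bigcup_kS_{R_k}\bigr)=1$ gives $P(N)=0$.

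The third step, and the main obstacle, is $\rrr\lll\pp$: for each $R$ we need a single $P\in\pp$ with $R\ll P$, whereas we only know $\mathcal{N}_\pp=\mathcal{N}_\rrr$. I would use an exhaustion argument based on $\sigma$-convexity.
\begin{enumerate}
\item For $P\in\pp$, the support property gives $P|_{S_R}\ll R$: if $R(N\cap S_R)=0$, then $N\cap S_R$ is polar. So we may write $A_P=\{dP|_{S_R}/dR>0\}$.
\item Let $c=\sup_{P\in\pp}R(A_P)$. Choose $P_n$ with $R(A_{P_n})\to c$ and set $P^*=\sum_n2^{-n}P_n\in\pp$. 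Then $A_{P^*}=\bigcup_nA_{P_n}$ up to $R$-null sets, so the supremum is attained by $P^*$.
\item Suppose $R(S_R\setminus A_{P^*})>0$. Then $S_R\setminus A_{P^*}$ is not $R$-null, hence not polar, so some $P'\in\pp$ charges it. The measure $\tfrac12P^*+\tfrac12P'$ would then strictly increase $R(A_\cdot)$, contradicting maximality.
\item Hence $R(S_R\setminus A_{P^*})=0$, and together with $R(\compl{S_R})=0$ this yields $R\ll P^*$.
\end{enumerate}
This completes the proof that $\rrr$ is a localization.
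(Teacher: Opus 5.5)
Your proposal is correct and follows the paper's proof essentially step by step. The same family serves as both localization and supported alternative, $\pp\lll\sconv(\rrr)$ comes from enumerating the countable set $\rrr(P)$, and $\rrr\lll\pp$ comes from the same $\sigma$-convex exhaustion argument; the only cosmetic difference is that you maximize $R(A_P)$ with $A_P=\{dP|_{S_R}/dR>0\}$ (using $P|_{S_R}\ll R$ from the support property), whereas the paper uses the absolutely continuous part $S_Q^{c,P}$ of the Lebesgue decomposition of $Q$ with respect to $P$, and these sets agree up to $R$-null sets.
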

Next, we would like to explore the relationship between the extensive duality results of Liebrich et al. \cite{liebrich-model-uncertainty:reverse-approach} and our Theorem \ref{thm:duality}. 
In \cite{liebrich-model-uncertainty:reverse-approach}, the pre-dual of $\LL$ is given by the supported signed measures $\mathrm{sca}(\pp)$, given by $\mathrm{sca}(\pp)$.
The sets $\mathrm{ca}(\pp)$ and $\mathrm{sca}(\pp)$ need not be identical, as is illustrated in the following example. 
\begin{example}\label{ex:ca-vs-sca}
  
Let $\Omega = [0,1]$, $\f= 2^{\Omega}$, $\q= \{\delta_{x}\mid x\in \Omega\}$, and $\pp = \co(\q)$.
Clearly, $\q$ is a supported alternative to $\pp$, and we have
\begin{equation*}
  \mathrm{ca}(\pp) = \biggl\{ \sum_{k=1}^{n} a_{k}\delta_{x_{k}}\, \big| \,  a_{k}\in \mathbb{R}, n \geq 0, x_{k}\in [0,1] \biggr\}.
\end{equation*}
On the other hand, we claim that 
\begin{equation}\label{eq:sca-example-claim}
  \mathrm{sca}(\pp) \supseteq \biggl\{ \sum_{k=1}^{\infty} a_{k} \delta_{x_{k}}\, \Big|\, a_{k}\in \mathbb{R}, \sum_{k=1}^{\infty}|a_{k}| < \infty, x_{k}\in [0,1] \biggr\} \supsetneq \mathrm{ca}(\pp).
\end{equation}
Indeed, let $\mu = \sum_{k=1}^{\infty}a_{k} \delta_{x_{k}}$ be of this form, then we define $S = \{x_{k}\mid a_{k}\neq 0\}$, and note that $|\mu|(\compl{S}) = \sum_{k=1}^{\infty} |a_{k}| \delta_{x_{k}}(\compl{S}) = 0$. 
Moreover, assume that for some $N\in \mathcal{F}$ we have $|\mu|(S\cap N) = 0$.
Since
\begin{equation*}
  |\mu|(S\cap N) = \sum_{k=1}^{\infty} |a_{k}| \delta_{x_{k}}(S\cap N) = 0,
\end{equation*}
this implies that $S\cap N = \varnothing$, and hence $S\cap N$ is a $\pp$-polar.
This proves the claim in \eqref{eq:sca-example-claim}.
In total, this implies that $\mathrm{ca}(\pp)$ is a proper subset of $\mathrm{sca}(\pp)$. 
\end{example}

Let us now discuss the relationship between Hahn-localizability and Cohen's Hahn property \cite{cohen}.  
\begin{mdef}[Hahn property, \cite{cohen}]\label{def:hahn-property}
The family $\pp$ is said to have the Hahn property, if there is a family of probability measures $\rr$ on $\f^{\pp}$ such that 
\begin{enumerate}
  \item $\pp$ and $\rr|_{\f}$ have the same polar sets, and $\mathcal{L}^{0}(\f^{\pp}) = \mathcal{L}^{0}(\f^{\rr})$,
  \item for every $R\in \rr$, there is a set $W_{R}\in \f^{\pp}$ with $R(W_{R}) = 1$, such that the sets $W_{R}$ are disjoint, and
  \item for all $F\in \f$ and $P\in \pp$, we have
    \begin{equation*}
P(F\cap W_{R}) = 0 \text{ for all } R\in \rr \text{ implies } P(F) = 0.
    \end{equation*}
\end{enumerate}
\end{mdef}
Following \cite{cohen}, we will call such a family $\mathcal{R}$  a dominating set. 
The set $\rr$ from the Hahn property should be viewed in contraposition to the localization $\q$ from Definition \ref{def:hahn-loc}. 
The Hahn property and Hahn localization are closely related.
The main difference between these two notions is that the Hahn property allows for the supports to be taken from $\f^{\pp}$, while the supports for pre-Hahn-localizable families are elements of $\f$ itself. 
On the other hand, the $W_{R}$ need to be disjoint, while the $S_{Q}$'s need not be disjoint. 

Their relation can be summarized as follows: Every pre-Hahn-localizable family of probability measures with a strictly disjointly supported localization has the Hahn property, with $\rr = \q$, and $S_{Q}$ (from Definition \ref{def:hahn-loc}) taking the role of $W_{R}$ from above (see Lemma \ref{lemma:pre-hahn-loc-implies-hahn-property}). And if a family of probability measures has the Hahn property and satisfies an additional (mild) condition, then its $\pp$-completion is pre-Hahn-localizable and has the class (S) property (see Lemma \ref{lemma:hahn-property+assumption-implies-pre-hahn-loc}). 

\begin{lemma}\label{lemma:pre-hahn-loc-implies-hahn-property}
  If $(\Omega, \f, \pp)$ is pre-Hahn-localizable with a strictly disjointly supported localization, then it has the Hahn property. 
\end{lemma}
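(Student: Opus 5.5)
We take $\rr = \q$, the strictly disjointly supported localization, and $W_{Q} = S_{Q}$. We then verify the three conditions of Definition~\ref{def:hahn-property} in turn. Two preliminary remarks are needed.

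First, each $Q\in\q$ is a measure on $\f$. It extends uniquely to $\f^{\pp}$: since $\q\lll\pp$, we have $Q\ll P$ for some $P\in\pp$, so every $\pp$-polar is $Q$-null and $\f^{\pp}\subseteq\f^{P}\subseteq\f^{Q}$. We regard $\q$ as a family on $\f^{\pp}$ in this way.

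Second, $S_Q\in\f\subseteq\f^{\pp}$, $Q(S_Q)=\delta_{QQ}=1$, and the $S_Q$ are pairwise disjoint by assumption. This is condition (2).

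For condition (1), we first show that $\pp$ and $\q|_{\f}$ have the same polar sets. Take $N\in\mathcal N_{\pp}$ and $Q\in\q$. Choose $P\in\pp$ with $Q\ll P$; then $Q(N)=0$. Conversely, take $N\in\mathcal N_{\q}$ and $P\in\pp$. Choose $\sum_k\lambda_kQ_k\in\sconv(\q)$ with $P\ll\sum_k\lambda_kQ_k$; since this measure vanishes on $N$, we get $P(N)=0$. Equal null sets give equal polar sets, $\mathcal Z_{\pp}=\mathcal Z_{\q}$.

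For the equality $\mathcal L^0(\f^{\pp})=\mathcal L^0(\f^{\q})$, it suffices to show $\f^{\pp}=\f^{\q}$. This is exactly Lemma~\ref{lemma:pre-hahn-loc-implies-pre-hahn-loc-of-closure}(1), which may be invoked directly. If one prefers a direct argument, the two inclusions go as follows.
\begin{itemize}
\item[$\subseteq$:] For each $Q$ pick $P$ with $Q\ll P$. Then $\f^{P}\subseteq\f^{Q}$, since $\mathcal Z_P\subseteq\mathcal Z_Q$, and hence $\f^{\pp}\subseteq\f^{\q}$.
\item[$\supseteq$:] Let $A\in\f^{\q}$ and $P\in\pp$, and take $P\ll\sum_k\lambda_kQ_k$. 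For each $k$ write $F_k\subseteq A\subseteq G_k$ with $F_k,G_k\in\f$ and $Q_k(G_k\setminus F_k)=0$. Set $F=\bigcup_k(F_k\cap S_{Q_k})$ and $G=\bigcup_k(G_k\cap S_{Q_k})\cup\compl{(\bigcup_kS_{Q_k})}$. Then $F\subseteq A\subseteq G$, and $G\setminus F$ is $\sum_k\lambda_kQ_k$-null, using $Q_j(S_{Q_k})=\delta_{jk}$. Hence $G\setminus F$ is $P$-null, so $A\in\f^{P}$.
\end{itemize}
The gluing along the supports in the second inclusion is the only step requiring care, and it is the main obstacle if the earlier lemma is not used.

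For condition (3), let $F\in\f$ and $P\in\pp$ with $P(F\cap S_Q)=0$ for all $Q\in\q$. By \eqref{eq:prob1supports}, $P\bigl(\bigcup_{Q\in\q(P)}S_Q\bigr)=1$, where $\q(P)$ is countable by Remark~\ref{rem:R(Q)-at-most-countable}. Therefore
\[
P(F)\le\sum_{Q\in\q(P)}P(F\cap S_Q)=0.
\]
This completes the verification of all three conditions.
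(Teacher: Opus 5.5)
Your proof is correct and follows the paper's own argument: $\rr=\q$ with $W_Q=S_Q$, equal polar sets from $\q\lll\pp\lll\sconv(\q)$, $\f^{\pp}=\fq$ from Lemma~\ref{lemma:pre-hahn-loc-implies-pre-hahn-loc-of-closure}, and condition (3) from \eqref{eq:prob1supports}; your remark on extending each $Q$ to $\f^{\pp}$ fills in a detail the paper leaves implicit. One correction to your commentary: the optional direct proof of $\fq\subseteq\f^{\pp}$ needs no gluing along supports, since $F=\bigcup_kF_k$ and $G=\bigcap_kG_k$ already give $F\subseteq A\subseteq G$ with $Q_j(G\setminus F)\le Q_j(G_j\setminus F_j)=0$ for every $j$, which is essentially the argument of Lemma~\ref{lemma:completion-sigma-convex}.
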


\begin{proof}
Let $(\Omega, \f, \pp)$ be pre-Hahn-localizable, with localization $\q$. 
First, we note that $\pp$ and $\q$ have the same polar sets. 
Next, we need to check that $\mathcal{L}^{0}(\f^{\pp}) = \mathcal{L}^{0}(\fq)$. 
This is equivalent to $\f^{\pp} = \fq$, which we

already established in Lemma \ref{lemma:pre-hahn-loc-implies-pre-hahn-loc-of-closure}.

Since the supports $S_{Q}$ of the $Q$'s in $\q$ are assumed to disjoint, the second requirement of Definition \ref{def:hahn-property} is also satisfied. 

Finally, we need to show that for all $F\in \f$ and $P\in \pp$, we have
\begin{equation*}
  P(F\cap S_{Q}) = 0\text{ for all }Q\in \q\text{ implies }P(F) =0.
\end{equation*}
Following Remark \ref{rem:R(Q)-at-most-countable} this is an immediate consequence of
\begin{equation*}
  P(F) = \sum_{Q\in \q(P)} P(F\cap S_{Q}) = 0. 
\end{equation*}
\end{proof}

\begin{lemma}\label{lemma:hahn-property+assumption-implies-pre-hahn-loc}
  If $(\Omega, \f, \pp)$ has the Hahn property with a 'dominating' set of probability measures $\rr$, and pairwise strictly disjoint supports $S_{R}\in \f^{\pp}$ (in the sense of \cite{cohen}), such that for all $P\in \pp$ we have $P \bigl(\bigcup_{R\in \mathcal{R}(P)} S_{R} \bigr)=1$, then $(\Omega, \f^{\pp}, \pp)$ is pre-Hahn-localizable with localization $\mathcal{R}$ (and has the class (S) property). 
\end{lemma}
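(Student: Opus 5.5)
We have to show that $(\Omega,\f^{\pp},\pp)$ is pre-Hahn-localizable with localization $\rr$. That means producing supports in $\f^{\pp}$ with $R(S_{R'})=\delta_{RR'}$, and checking $\rr\lll\pp\lll\sconv(\rr)$. The class (S) statement should then come from Lemma \ref{lemma:equivalence_(S)_preHahnloc}, or directly.

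\textbf{Supports.} We take $S_R := W_R\in\f^{\pp}$. Then $R(S_R)=1$ by the Hahn property (ii). For $R\neq R'$ we have $S_R\cap S_{R'}=\varnothing$, so $R(S_{R'})\le R(\compl{S_R})=0$. Hence condition (1) of Definition \ref{def:hahn-loc} holds, with strictly disjoint supports.

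\textbf{$\pp\lll\sconv(\rr)$.} Fix $P\in\pp$. Since the $S_R$ are disjoint and $P$ is finite, the set $\rr(P)=\{R\mid P(S_R)>0\}$ is countable; list it as $R_1,R_2,\dots$. Put $Q=\sum_k 2^{-k}R_k\in\sconv(\rr)$, padding with any element if $\rr(P)$ is finite. Let $N\in\f^{\pp}$ with $Q(N)=0$. Then $R_k(N)=0$ for all $k$, so $R_k(N\cap S_{R_k})=0$.

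The key point is to pass from $R_k$-nullity of $N\cap S_{R_k}$ to $P$-nullity. Here I use the same-polar-sets condition (i): $R|_{\f}$ and $\pp$ have the same null sets, and these transfer to $\f^{\pp}$ by completion. Concretely, $N\cap S_{R_k}$ lies in $\f^{\pp}$, so it differs from some $F\in\f$ by a $P$-null set and also by an $R_k$-null set. I expect this compatibility of completions to be the main obstacle. It uses $\mathcal{L}^0(\f^{\pp})=\mathcal{L}^0(\f^{\rr})$, which gives $\f^{\pp}=\f^{\rr}$.

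Even with that, $R_k(F)=0$ only says $F$ is $R_k$-null, not $\pp$-polar. The right tool is the Hahn property (iii) applied locally. Cohen's theory gives that each $R$ dominates $\pp$ on $W_R$: if $R(F\cap W_R)=0$ then $P(F\cap W_R)=0$ for all $P$. I would derive this from (i) and (iii) as follows. Suppose $R(F\cap W_R)=0$ but $P(F\cap W_R)>0$. Choose $F'\in\f$ approximating $F\cap W_R$. Then $F'$ is $R'$-null on every $W_{R'}$: for $R'\neq R$ this follows from disjointness, and for $R'=R$ by assumption. Then (iii) applied to $F'$ forces $P(F')=0$, a contradiction.

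Granting this local domination, $P(N)=\sum_k P(N\cap S_{R_k})=0$, using $P(\bigcup_k S_{R_k})=1$. Hence $P\ll Q$.

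\textbf{$\rr\lll\pp$.} For each $R$ we need some $P\in\pp$ with $R\ll P$. If $P_R$ is any element of $\pp$ with $P_R(W_R)>0$ (one exists, since $W_R$ is not polar for $R$, hence not $\pp$-polar), the local domination goes the other way as well. Namely, $P_R(A\cap W_R)=0$ implies $R(A)=0$ when $R\ll\pp$ holds on $W_R$. The general statement $R\ll\pp$ comes from (i), but here I need a single $P$. If $\pp$ is not assumed $\sigma$-convex, this step may genuinely require $R\ll P$ for some $P$, which is presumably implicit in Cohen's construction, where the $R$ are built from elements of $\pp$ restricted to $W_R$. I would verify it from Cohen's definition and otherwise pass to $\sconv(\pp)$, which has the same $\LL$. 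Finally, the class (S) property follows because the $S_R$ are strictly disjoint supports of $\rr$, and $P(\bigcup_{R\in\rr(P)}S_R)=1$ by hypothesis. So Lemma \ref{lemma:equivalence_(S)_preHahnloc}, direction (2)$\Rightarrow$(1), applies on $\f^{\pp}$.
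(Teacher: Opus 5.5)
\textbf{The gap: $\rr\lll\pp$ is not proved, and it is the substantive part of the lemma.} Your proposed reverse local domination is false: a single $P_R$ with $P_R(W_R)>0$ does not give ``$P_R(A\cap W_R)=0\Rightarrow R(A)=0$''. Condition (i) only transfers nullity under \emph{all} of $\pp$ to $R$, and different elements of $\pp$ can charge disjoint pieces of $W_R$.

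Your suspicion about $\sigma$-convexity is justified. Take $\Omega=\{1,2\}$, $\f=2^{\Omega}$, $\pp=\{\delta_1,\delta_2\}$, $\rr=\{R\}$ with $R=\tfrac12(\delta_1+\delta_2)$, and $W_R=\Omega$. All hypotheses hold, yet $R\not\ll P$ for every $P\in\pp$.

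The paper's proof only checks that each $R$ is supported with support $S_R$: if $R(N\cap S_R)=0$, disjointness makes $N\cap S_R$ an $\rr$-polar, hence a $\pp$-polar by (i). It then invokes Lemma \ref{lemma:equivalence_(S)_preHahnloc}, which assumes $\pp$ is $\sigma$-convex. There, $\q\lll\pp$ comes from an exhaustion argument: maximize $Q(S_Q^{c,P})$ over $P\in\pp$ with $P(S_Q)>0$, attain the supremum with a countable mixture $P^*$, and use supportedness to show it equals $1$. Your proposal defers exactly this step with ``I would verify it''. Passing to $\sconv(\pp)$ amounts to adding the hypothesis, but you would still need an argument.

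Under $\sigma$-convexity the step closes quickly. By disjointness and (i), every $P(\cdot\cap S_R)$ with $P\in\pp$ is absolutely continuous with respect to $R$. The Halmos--Savage theorem then gives $P_k\in\pp$ such that $\sum_k 2^{-k}P_k(A\cap S_R)=0$ forces $A\cap S_R$ to be a $\pp$-polar. By (i), $R(A)=R(A\cap S_R)=0$, so $R\ll\sum_k 2^{-k}P_k\in\pp$.

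\textbf{A smaller error, in the step $\pp\lll\sconv(\rr)$.} You appeal to (iii), but its hypothesis, $P(F\cap W_R)=0$ for all $R$, is a statement about $P$, not about the $R$'s, so it does not yield $P(F')=0$. The right tool is (i). Choose $F'\in\f$ with $F'\subseteq F\cap W_R$ and $(F\cap W_R)\setminus F'$ a $P$-polar. Then $R'(F')=0$ for every $R'\in\rr$: by disjointness for $R'\neq R$, and by assumption for $R'=R$. So $F'$ is an $\rr$-polar in $\f$, hence a $\pp$-polar; this is precisely the paper's supportedness argument.

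With that correction, the rest of the step is correct and matches the corresponding part of the proof of Lemma \ref{lemma:equivalence_(S)_preHahnloc}: $\rr(P)$ is countable, and $P(N)=\sum_k P(N\cap S_{R_k})$ follows from $P(\bigcup_{R\in\rr(P)}S_R)=1$. Condition (iii) is never actually needed, since it follows from that hypothesis.
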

\begin{proof}
  By Lemma \ref{lemma:equivalence_(S)_preHahnloc} it is sufficient to show that $(\Omega, \f^{\pp}, \pp)$ has the class (S) property with disjointly supported alternative $\rr$. 
  We first note that $\rr$ and $\pp$ have the same polar sets by the Hahn property. 
  Hence, it is sufficient to show that every $R\in \rr$ is supported. 
  Clearly, $R(\compl{S_{R}}) = 0$.
  Now, let $N\in \f^{\pp}$, and assume that $R(N\cap S_{R}) = 0$. 
  We need to show that $N\cap S_{R}$ is a $\pp$-polar. 
  Since the $S_{R}$'s are disjoint by assumption, and $R(N\cap S_{R}) = 0$, we have
  \begin{equation*}
    Q(N\cap S_{R}) = 0 \qquad \text{for all } Q\in \rr.
  \end{equation*}
  This means that $N\cap S_{R}$ is  an $\rr$-polar. 
  Since $\rr$ and $\pp$ have the same polar sets, this implies that $N\cap S_{R}$ is a $\pp$-polar, and hence $R$ is supported.

\end{proof}

As a consequence of the last two lemmas, we get the following corollary. 
\begin{corollary}\label{cor:pre-hahn-loc-equivalent-to-all-mod-completion}
Let $(\Omega, \f, \pp)$ be a statistical model. Then the following are equivalent:
\begin{enumerate}
\item $(\Omega, \f^{\pp}, \pp)$ is pre-Hahn-localizable with a strictly disjointly supported localization $\q$. 
\item $(\Omega, \f^{\pp}, \pp)$ has the class (S) property with a strictly disjoint supported alternative $\rr$, such that for all $P\in \pp$ we have $P \bigl(\bigcup_{ R\in \mathcal{R}(P)}S_{R} \bigr)=1$.
\item $(\Omega, \f^{\pp},\pp)$ has the Hahn property with a 'dominating' set of probability measures $\rr$, such that for all $P\in \pp$ we have $P \bigl(\bigcup_{R\in \mathcal{R}(P)}S_{R} \bigr)=1$.
\end{enumerate}
\end{corollary}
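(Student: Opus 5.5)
The plan is to close the cycle of implications (1) $\Rightarrow$ (3) $\Rightarrow$ (2) $\Rightarrow$ (1). Each step is a direct application of one of the preceding lemmas, all applied to the complete model $(\Omega,\f^{\pp},\pp)$. Throughout, $\f^{\pp}$ is complete: the $\pp$-closure of $\f^{\pp}$ is again $\f^{\pp}$, because every set in $(\f^{\pp})^{P}$ already lies in $\f^{P}$ for each $P$. This lets us apply the lemmas with $\f$ replaced by $\f^{\pp}$ without the closure changing.

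For (1) $\Rightarrow$ (3), we apply Lemma \ref{lemma:pre-hahn-loc-implies-hahn-property} to $(\Omega,\f^{\pp},\pp)$. It shows that this model has the Hahn property with dominating set $\rr=\q$ and $W_Q=S_Q$; the identity $\f^{\pp}=\f^{\q}$ there comes from Lemma \ref{lemma:pre-hahn-loc-implies-pre-hahn-loc-of-closure}. It remains to check that $P(\bigcup_{R\in\rr(P)}S_R)=1$ for every $P\in\pp$. This is exactly \eqref{eq:prob1supports} of Remark \ref{rem:R(Q)-at-most-countable}, which holds because $\pp\lll\sconv(\q)$.

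For (3) $\Rightarrow$ (2), we apply Lemma \ref{lemma:hahn-property+assumption-implies-pre-hahn-loc} to $(\Omega,\f^{\pp},\pp)$. Its proof shows directly that each $R\in\rr$ is supported, with support $S_R$, on $(\f^{\pp})^{\pp}=\f^{\pp}$. Since $\rr$ and $\pp$ have the same polar sets, $\rr$ is a supported alternative. The supports are strictly disjoint by the Hahn property, and the countable-support condition is inherited verbatim.

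For (2) $\Rightarrow$ (1), we invoke Lemma \ref{lemma:equivalence_(S)_preHahnloc}. This gives pre-Hahn-localizability with localization $\rr$ and the same supports, so the localization is strictly disjointly supported.

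The main obstacle is that Lemma \ref{lemma:equivalence_(S)_preHahnloc} assumes $\pp$ is $\sigma$-convex, and the corollary does not. The remedy I would try first is to replace $\pp$ by $\sconv(\pp)$. This family has the same polar sets and, by a routine check, the same completion $\f^{\pp}$. The class (S) and countable-support conditions transfer to $\sconv(\pp)$, since a countable union of the countable sets $\rr(P_k)$ is countable. We then obtain a localization $\q$ with $\q\lll\sconv(\pp)\lll\sconv(\q)$ and have to pass back to $\pp$. Here $\sconv(\pp)\lll\sconv(\q)$ gives $\pp\lll\sconv(\q)$ immediately. For $\q\lll\pp$, take $Q$ with $Q\ll\sum_k\lambda_kP_k$. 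Since $Q(S_Q)=1$, some $P_k$ charges $S_Q$. Restricting and normalizing $P_k$ to $S_Q$ then dominates $Q$ after shrinking $\q$ to the measures $P_k(\cdot\mid S_Q)$. To keep this valid I would replace the localization by such normalized restrictions of elements of $\pp$, which is admissible because on $S_Q$ all measures charging it are equivalent to $Q$ by supportedness. If this adjustment fails, I would instead redo the proof of Lemma \ref{lemma:equivalence_(S)_preHahnloc} directly, using only $\pp\lll\sconv(\rr)$, which follows from the countable-support condition via $P=\sum_{R\in\rr(P)}P(\cdot\cap S_R)$.
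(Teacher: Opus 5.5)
Your cycle is the derivation the paper intends. The paper calls the corollary a consequence of Lemmas \ref{lemma:pre-hahn-loc-implies-hahn-property} and \ref{lemma:hahn-property+assumption-implies-pre-hahn-loc}, and the latter runs through Lemma \ref{lemma:equivalence_(S)_preHahnloc}. Your steps (1)$\Rightarrow$(3) and (3)$\Rightarrow$(2) are fine. You are also right that Lemma \ref{lemma:equivalence_(S)_preHahnloc} needs $\sigma$-convexity, which the paper uses silently.

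The gap is your repair of (2)$\Rightarrow$(1). Supportedness of $Q$ gives only $P(\cdot\cap S_Q)\ll Q$ for $P\in\pp$, not $Q\ll P(\cdot\cap S_Q)$. So measures charging $S_Q$ need not be equivalent to $Q$ there, and $P_k(\cdot\mid S_Q)$ need not dominate $Q$. Replacing $Q$ by one such restriction can destroy $\pp\lll\sconv(\q)$. The fallback fails too, because with $\rr$ itself as localization the relation $\rr\lll\pp$ can be false. Take $\Omega=\mathbb{N}$, $\f=2^{\mathbb{N}}$, $\pp=\{\delta_n\mid n\in\mathbb{N}\}$, and $\rr=\{R\}$ with $R=\sum_n 2^{-n}\delta_n$ and $S_R=\Omega$. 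Then (2) holds, but no $\delta_n$ dominates $R$.

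The missing idea is that the supports must be refined. Your first attempt works if you keep all the $P_k$ rather than one. Take $Q\in\rr$ with $Q\ll\sum_k\lambda_kP_k$, where $P_k\in\pp$ and $\lambda_k>0$. You can get this from the $\sconv(\pp)$ version of the lemma. Alternatively, apply Halmos--Savage to $\{P(\cdot\mid S_Q)\mid P\in\pp,\ P(S_Q)>0\}$; by supportedness and $\mathcal{N}_{\rr}=\mathcal{N}_{\pp}$, this family has the same null sets as $Q$ on $S_Q$. Let $f_k$ be the density of $P_k(\cdot\cap S_Q)$ with respect to $Q$, and set
\begin{equation*}
  A^{Q}_{k}=\bigl(S_Q\cap\{f_k>0\}\bigr)\setminus\bigcup_{j<k}\{f_j>0\},\qquad Q_k=P_k(\cdot\mid A^{Q}_{k})\quad\text{if } P_k(A^{Q}_{k})>0.
\end{equation*}
Then the following hold.
\begin{enumerate}
\item Since $Q\ll\sum_k\lambda_kP_k(\cdot\cap S_Q)$, the sets $A^{Q}_{k}$ exhaust $S_Q$ up to a $Q$-null set.
\item Since $f_k>0$ on $A^Q_k$, we get $Q\ll\sum_k\lambda_kP_k(\cdot\cap A^{Q}_{k})$.
\item Over all $Q\in\rr$ and all $k$, the measures $Q_k$ have pairwise disjoint supports $A^{Q}_{k}\in\f^{\pp}$, and $Q_k\ll P_k\in\pp$.
\item Every $P\in\pp$ is dominated by a countable combination of the $Q_k$, since $P\ll\sum_{R\in\rr(P)}c_RR$ by the countable-support condition.
\end{enumerate}
In the example this construction gives $\{\delta_n\}$ with supports $\{n\}$. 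So (2)$\Rightarrow$(1) holds without $\sigma$-convexity, but only with a new localization, not with $\rr$ and its supports.
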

\subsection{Proof of Theorem \ref{thm:duality}}\label{sec:duality}
The goal of this section is to prove Theorem \ref{thm:duality}.
It has already been shown in Lemma 3.4 of \cite{cuchiero2025quantitativehalmossavagetheoremsrobust} that the map $l: \LL(\pp)\to \mathrm{ca}(\pp)'$, given by
\begin{equation*}
  f\mapsto \biggl( \mu \mapsto \int_{\Omega}f d \mu \biggr)
\end{equation*}
is a linear isometry for arbitrary convex families $\pp$. 
Hence, all we have to show is that $l$ is surjective. 
The proof of Theorem \ref{thm:duality} requires a few lemmas. 
\begin{lemma}\label{lemma:TV-norm-of-singular-sum}
  Let $(\Omega,\f)$ be a measure space, $\mu$ be a finite signed measure on $\f$, and $\mu_{k}$ be a family of pairwise singular, finite, signed measures on $\f$, such that $\mu(F) = \sum_{k=1}^{\infty}\mu_{k}(F)$ for all $F\in \f$, then the series $\sum_{k=1}^{\infty} \mu_{k}$ converges in total variation norm, and
\begin{equation*}
  |\mu|(F) = \sum_{k=1}^{\infty}|\mu_{k}|(F)
\end{equation*}
for every $F\in \f$.
\end{lemma}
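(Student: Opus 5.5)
The plan is to use singularity to produce a measurable partition that separates the supports of the countably many $\mu_k$. Then both claims reduce to countable additivity of $|\mu|$ and of each $|\mu_k|$.

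\emph{Step 1: a separating partition.} Since the $\mu_k$ are pairwise singular, for every pair $j\neq k$ there is $A_{jk}\in\f$ with $|\mu_j|(\compl{A_{jk}})=0$ and $|\mu_k|(A_{jk})=0$. Define
\begin{equation*}
B_k=\bigcap_{j\neq k}\bigl(A_{kj}\cap \compl{A_{jk}}\bigr).
\end{equation*}
\begin{enumerate}
\item $|\mu_k|$ is concentrated on $B_k$: each set removed has $|\mu_k|$-measure zero, and there are only countably many of them.
\item The $B_k$ are pairwise disjoint: if $j\neq k$, then $B_k\subseteq A_{kj}$ and $B_j\subseteq\compl{A_{kj}}$.
\item With $B_0=\compl{(\biguplus_k B_k)}$, we have $|\mu_k|(B_0)=0$ for all $k\geq 1$.
\end{enumerate}

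\emph{Step 2: the $\mu_k$ are restrictions of $\mu$.} Fix $F\in\f$. For every $j$,
\begin{equation*}
\mu(F\cap B_j)=\sum_k\mu_k(F\cap B_j)=\mu_j(F),
\end{equation*}
since $\mu_k(F\cap B_j)=0$ for $k\neq j$ and $\mu_j(F\cap B_j)=\mu_j(F)$. Likewise $\mu(F\cap B_0)=0$. So $\mu_k=\mu|_{B_k}$, meaning $\mu_k(F)=\mu(F\cap B_k)$, and $\mu$ vanishes on all measurable subsets of $B_0$. Taking total variations gives $|\mu_k|=|\mu|(\,\cdot\,\cap B_k)$ and $|\mu|(B_0)=0$.

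\emph{Step 3: the variation identity.} By countable additivity of $|\mu|$,
\begin{equation*}
|\mu|(F)=|\mu|(F\cap B_0)+\sum_k|\mu|(F\cap B_k)=\sum_k|\mu_k|(F).
\end{equation*}

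\emph{Step 4: convergence in total variation.} The partial sum $\sum_{k\leq n}\mu_k$ equals $\mu(\,\cdot\,\cap\biguplus_{k\leq n}B_k)$. Its distance to $\mu$ is therefore
\begin{equation*}
\Bigl\|\mu-\sum_{k\leq n}\mu_k\Bigr\|_{TV}=|\mu|\Bigl(\bigcup_{k>n}B_k\cup B_0\Bigr)=\sum_{k>n}|\mu_k|(\Omega).
\end{equation*}
This tends to $0$ because it is the tail of the convergent series $\sum_k|\mu_k|(\Omega)=|\mu|(\Omega)<\infty$, which is Step 3 with $F=\Omega$.

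The only delicate point is Step 1, namely making pairwise singularity into a single disjoint measurable partition. It works because there are only countably many pairs. Once the partition exists, everything else is bookkeeping.
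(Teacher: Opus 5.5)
Your proof is correct, but it is organized differently from the paper's.

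The paper proves the identity as two inequalities, working directly from the partition definition of $|\mu|$. The bound $|\mu|(F)\le\sum_k|\mu_k|(F)$ is just the triangle inequality, and it holds for any setwise-convergent series, singular or not. The reverse bound uses pairwise disjoint supports $S_k$ together with $\varepsilon 2^{-k}$-optimal partitions of $F$ for each $|\mu_k|$; these are intersected with the $S_k$ and glued into a single partition of $F$.

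You instead first establish the structural fact $\mu_k=\mu(\,\cdot\,\cap B_k)$. After that, the identity is simply countable additivity of the single measure $|\mu|$ over the partition $\{B_0,B_1,\dots\}$, with no approximation needed.

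Your route also fills in two points the paper leaves implicit:
\begin{enumerate}
\item The paper only asserts that disjoint supports exist ``by the very definition of pairwise singular measures''. Your Step 1 actually constructs them, using the countability of the pairs. You also phrase the null conditions through $|\mu_j|$, which is what signed measures require; the paper's condition $\mu_k(S_j)=0$ has to be read as $|\mu_k|(S_j)=0$.
\item The paper's proof of the lemma never addresses convergence in total variation. That fact is only derived later, in the proof of Theorem~\ref{thm:duality}, by applying the identity to the tails $\sum_{k>K}\mu_k$. Your Step 4 gives it directly.
\end{enumerate}

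The one fact you use without proof is that the total variation of $\mu(\,\cdot\,\cap B)$ equals $|\mu|(\,\cdot\,\cap B)$. This is standard and follows immediately from the partition definition.
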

\begin{proof}
  Note that for $F\in \f$
  \begin{equation*}
    \begin{split}
    |\mu|(F) &= \sup \biggl\{\sum_{j=1}^{\infty} |\mu(F_{j})|\, \biggl|\, F_{j}\in \f: F = \biguplus_{j=1}^{\infty}F_{j}\biggr\}\\
              &\leq \sum_{k=1}^{\infty}\sup \biggl\{\sum_{j=1}^{\infty} |\mu_{k}(F_{j})| \,\biggl|\, F_{j}\in \f: F = \biguplus_{j=1}^{\infty}F_{j}\biggr\} = \sum_{k=1}^{\infty}|\mu_{k}|(F). 
    \end{split}
  \end{equation*}

  For the direction $|\mu|(F)\geq \sum_{k=1}^{\infty}|\mu_{k}|(F)$, we denote with $S_{k}$ the \emph{support} of $\mu_{k}$, i.e., a family of sets, such that $\mu_{k}(S_{j}) =0$ for $k\neq j$, and $\mu(S_{k}\cap F) = \mu_{k}(S_{k}\cap F) = \mu_{k}(F)$ for all $F\in \f$. 
  These sets exist by the very definition of pairwise singular measures. 
  Note that one can choose the $S_{k}$'s to be pairwise disjoint. 
  We write $S= \compl{\bigl(\bigcup_{k\geq 1}S_{k}\bigr)}$.
  For $\varepsilon>0$ and $k\geq 1$, let $\{F_{i}^{k}\}_{i=1}^{\infty}$ be a partition of $F$, such that
  \begin{equation*}
    |\mu_{k}|(F) \leq \sum_{i=1}^{\infty}|\mu_{k}(F_{i}^{k})| + \frac{\varepsilon}{2^{k}},
  \end{equation*}
  and note that 
  \begin{equation*}
    F = (F\cap S)\uplus \biguplus_{k=1}^{\infty}\biguplus_{i=1}^{\infty} (F_{i}^{k}\cap S_{k})
  \end{equation*}
  Since $\mu(F\cap S) = 0$, this implies 
  \begin{equation*}
    |\mu|(F) \geq \sum_{k=1}^{\infty}\sum_{i=1}^{\infty} |\mu(F_{i}^{k}\cap S_{k})| \geq \sum_{k=1}^{\infty} |\mu_{k}|(F) - \varepsilon,
  \end{equation*}
  which implies the claim, as $\varepsilon>0$ was arbitrary.

\end{proof}

\begin{lemma}\label{lemma:h_in_Linft}
Let $x'\in \mathrm{ca}(\pp)'$ such that there exists a measurable $h$ with $x'(\mu)=\int h d\mu$, for all $\mu\in \mathrm{ca}(\pp)$. Then $h$ is unique $\pp$-q.s. and $h\in\LL(\pp)$.
\end{lemma}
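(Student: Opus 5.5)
The statement has two parts: uniqueness of $h$ up to $\pp$-polar sets, and the fact that $h$ is $\pp$-q.s. bounded. Both rest on one device. Testing the functional $x'$ against measures of the form $\ind_{A}\,dP$ with $P\in\pp$ and $A\in\f$ recovers $\int_A h\,dP$. Implicit in the hypothesis is that $\int h\,d\mu$ is defined, i.e.\ $h\in L^{1}(|\mu|)$, for every $\mu\in\mathrm{ca}(\pp)$. This holds in particular for every $P\in\pp$ and every $\mu = \ind_A P\in \mathrm{ca}(P)\subseteq \mathrm{ca}(\pp)$.

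\emph{Uniqueness.} Suppose $h$ and $\tilde h$ both represent $x'$. Then $\int (h-\tilde h)\,d\mu = 0$ for all $\mu\in\mathrm{ca}(\pp)$. Fix $P\in\pp$ and put $A=\{h>\tilde h\}\in\f$. Using $\mu=\ind_A P$ gives
\begin{equation*}
  \int_A (h-\tilde h)\,dP = 0 .
\end{equation*}
The integrand is strictly positive on $A$, so $P(A)=0$. The same argument applies to $\{h<\tilde h\}$. Hence $\{h\neq\tilde h\}$ is a $P$-null set for every $P\in\pp$, so it lies in $\mathcal N_{\pp}$ and $h=\tilde h$ $\pp$-q.s.

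\emph{Boundedness.} Put $M=\|x'\|$. I claim that $|h|\le M$ $\pp$-q.s. Suppose not. Then for some $\varepsilon>0$ the set $A=\{|h|>M+\varepsilon\}\in\f$ is not $\pp$-polar, so there is $P\in\pp$ with $P(A)>0$.

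To control signs, set $\mu = \operatorname{sgn}(h)\ind_A\,P$. This is a finite signed measure with $|\mu| = \ind_A P\ll P$, so $\mu\in\mathrm{ca}(\pp)$ and $\|\mu\|_{TV}=P(A)$. Then
\begin{equation*}
  x'(\mu)=\int_A |h|\,dP \ge (M+\varepsilon)P(A) > M\|\mu\|_{TV},
\end{equation*}
which contradicts $|x'(\mu)|\le \|x'\|\,\|\mu\|_{TV}$.

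Note that if $\int_A|h|\,dP$ were $+\infty$, the integral would not be defined, which is already excluded by the hypothesis. Alternatively, one can avoid integrability issues entirely by replacing $A$ with $A\cap\{|h|\le n\}$ for large $n$, which still has positive $P$-measure. Hence $|h|\le M$ outside a $\pp$-polar set, so $h\in\mathcal L^\infty$ and its class lies in $\LL(\pp)$. This also yields $\|h\|_{\LL}\le\|x'\|$.

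The only delicate point is this integrability and sign bookkeeping, which the truncation handles. Convexity of $\pp$ is not needed here, since only measures dominated by a single $P$ are used.
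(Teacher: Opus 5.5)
Your proof is correct. The uniqueness part is essentially the paper's argument. The paper passes to $A_n=\{h_1-h_2\ge 1/n\}$ to get the strict bound $\int(h_1-h_2)\,d\mu\ge \frac1n P(A_n)>0$, whereas you use directly that a nonnegative integrand with zero integral vanishes $P$-a.s.; the two are interchangeable.

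For boundedness you take a genuinely different and more direct route. The paper first proves uniform $\mathcal{L}^p$ bounds. It tests $x'$ against $\mu^{P,p}$, whose density with respect to $P$ is $|h|^{p-1}(\ind_{\{h\ge0\}}-\ind_{\{h<0\}})$, and obtains $E_P[|h|^p]\le c\,E_P[|h|^{p-1}]$ with $c=\|x'\|$. Iterating gives $\sup_{P\in\pp}E_P[|h|^p]\le c^p$. Markov's inequality then yields $\sup_{P\in\pp}P(|h|\ge n_0)\le (c/n_0)^p\to0$ as $p\to\infty$, for any $n_0>c$.

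You instead test against the single measure $\mathrm{sgn}(h)\ind_{\{|h|>M+\varepsilon\}}P$. This is the standard argument for $\|g\|_{L^\infty}\le\|T_g\|$ in the classical duality, and it reaches $\|h\|_{\LL}\le\|x'\|$ in one step. Your route also handles the integrability bookkeeping explicitly, which the paper leaves implicit: at each stage it needs $E_P[|h|^{p-1}]<\infty$ to know that $\mu^{P,p}\in\mathrm{ca}(\pp)$, and this only follows by induction from the previous stage.

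What the paper's route buys is the intermediate fact that $h\in\mathcal{L}^p(\pp)$ for every $p\ge1$, with norm at most $\|x'\|$. That fact is not needed for the lemma. As you note, neither argument uses convexity of $\pp$, since only measures in $\mathrm{ca}(P)$ for a single $P\in\pp$ are involved.
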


\begin{proof}
For uniqueness, assume to the contrary that there would exist measurable $h_1$ and $h_2$ which are not equal $\pp$-q.s. Then there exists $P\in\pp$ such that $P(h_1\ne h_2)>0$. W.l.o.g. we can assume that $P(A)>0$ where $A=\{h_1>h_2\}$. Note that $A\in\f$ by measurability of $h_1$, $h_2$. 
There exists $n\geq  1$ such that $P(A_n)>0$ with  $A_n=\{h_1-h_2\geq\frac1{n}\}$. Define a measure $\mu\in\mathrm{ca}(\pp)$ via $\frac{d\mu}{dP}=\ind_{A_n}$. Then
$$0=x'(\mu)-x'(\mu)=\int(h_1-h_2)d\mu\geq\frac1{n}P(A_n)>0,$$
a contradiction.

Now we will show that $h\in\LL(\pp)$. First we will show that $h\in\mathcal{L}^p(\pp),$ for all $1\leq p<\infty$. 
Indeed, as $x'\in  (\mathrm{ca}(\pp))'$, for all $\mu\in \mathrm{ca}(\pp)$, 
\begin{equation}\label{old:c}\biggl|\int h d\mu\biggr|=|x'(\mu)|\leq c \|\mu\|_{TV}.\end{equation}
where $c=\|x'\|_{\mathrm{ca}(\pp)'}$. For a fixed $P\in\pp$ and all $p$, $1\leq p<\infty$, define $\mu^{P,p}$ as follows
$$\frac{d\mu^{P,p}}{dP}=|h|^{p-1}(\ind_{\{h\geq0\}}-\ind_{\{h<0\}}).$$
Note that for the Hahn-Jordan decomposition of $\mu^{P,p}=(\mu^{P,p})^+-(\mu^{P,p})^-$ we have that $\Omega=
(\Omega^{P,p})^+\cup(\Omega^{P,p})^-$ where $(\Omega^{P,p})^+=\{h\geq0\}$ and 
$(\Omega^{P,p})^-=\{h<0\}$. Hence

$$\|\mu^{P,p}\|_{TV}=\mu^+((\Omega^{P,p})^+)+\mu^-((\Omega^{P,p})^-)=E_P[|h|^{p-1}].$$

Note further that 
$$\biggl|\int hd\mu^{P,p}\biggr|=\int|h|^{p-1}(\ind_{\{h\geq0\}}-\ind_{\{h<0\}})hdP=\int |h|^pdP=E_P[|h|^p].$$ 

Hence by \eqref{old:c} we get
$$E_P[|h|^p]\leq cE_P[|h|^{p-1}],$$
for all $p\geq1$.  Assume for the moment that $p\in\mathbb{N}$. Then applying the above inequality for $p-1,p-2,\dots, 1$ we get
$E_P[|h|^p]\leq c^p$ and thus $\|h\|_{L^p(P)}\leq c$,
for all $p\in\mathbb{N}$, which implies $\|h\|_{L^p(P)}\leq c$ also for the $p\geq 1$ which are not in $\mathbb{N}$. Apply this for every $P\in\pp$ to find, for all $p\geq1$,
\begin{equation}\label{old:uniform_bound_in_Lp}
\|h\|_{\LLp}=(\sup_{P\in\pp}E_P[|h|^p])^{\frac1{p}}\leq c.
\end{equation}
This does not only show that $h\in\mathcal{L}^p(\pp),$ for all $1\leq p<\infty$, but that the $p$-norms are all bounded by the uniform constant $c$.

An easy application of Markov's inequality, see also Lemma~13 of \cite{denis_hu_peng_2010} shows that, for all $\alpha>0$,
$$\sup_{P\in\pp}P(|h|\geq\alpha)\leq \frac{\sup_{P\in\pp}E_P[|h|^p]}{\alpha^p}.$$
Apply this for $\alpha=n$, $n\geq1$, to see that, by \eqref{old:uniform_bound_in_Lp} we get, for all $p\geq1$,

\begin{equation}\label{old:n}
\sup_{P\in\pp}P(|h|\geq n)\leq \left(\frac{c}{n}\right)^p    
\end{equation}
Choose now an arbitrary but fixed $n_0>c$ then $(\frac{c}{n_0})^p\to0$ for $p\to\infty$. Hence we get $\sup_{P\in\pp}P(|h|\geq n_0)=0$, hence $|h|\leq n_0$-q.s. and $h\in \mathcal{L}^{\infty}(\pp)$.
\end{proof}

\begin{proof}[Proof (Theorem \ref{thm:duality}).]
Let $\q \subseteq \pp$ be a Hahn localization of $\pp$.
We are first going to restrict our attention to the subspaces $\mathrm{ca}(Q)$ of $\mathrm{ca}(\pp)$, for $Q \in \q$.
Recall that $\mathrm{ca}(Q)$ is isometrically isomorphic to $L^{1}(Q)$ via 
\begin{equation*}
  T_{Q}: L^{1}(Q) \to \mathrm{ca}(Q), \quad f \mapsto \biggl(E \mapsto \int_{E}fdQ\biggr).
\end{equation*}
The inverse of $T_{Q}$ maps $\mu\in \mathrm{ca}(Q)$ to its Radon--Nikodym derivative $d \mu/dQ$.
If $x'\in (\mathrm{ca}(\pp))'$, then $x'\circ T_{Q}\in L^{1}(Q)' \simeq L^{\infty}(Q)$, i.e., there is a $g_{Q}\in L^{\infty}(Q)$ such that for all $f\in L^{1}(Q)$,
\begin{equation*}
  x'(T_{Q}(f)) = \int fg_{Q}dQ.
\end{equation*}
Since $Q$ is supported on $S_{Q}$, we have
\begin{equation*}
  \int fg_{Q}dQ = \int fg_{Q}\ind_{S_{Q}}dQ,
\end{equation*}
for every $f\in L^{1}(Q)$.
In other words, we can assume with out loss of generality that $g_{Q}$ is supported on $S_{Q}$ (otherwise we replace it by $g_{Q}\ind_{S_{Q}}$).
Now, for $\mu\in \mathrm{ca}(Q)$, we get
\begin{equation}\label{eq:l(m)=int}
  x'(\mu) = x'(T_{Q}T_{Q}^{-1}\mu) = \int g_{Q} \frac{d \mu}{dQ}dQ = \int g_{Q}d \mu.
\end{equation}

Next, we would like to \textit{glue} the $g_{Q}$'s into a $\pp$-quasi bounded function $h$, such that $h = g_{Q}$ $Q$-a.s. on every $S_{Q}$.
For $q\in \mathbb{Q}$ and $Q \in \q$ let
\begin{equation*}
  E_{q,Q} = \{ \omega \in S_{Q} \mid g_{Q}\geq q\} = S_{Q} \cap \{g_{Q}\geq q\} \in \f.
\end{equation*}
Since $E_{q,Q}\subseteq S_{Q}$ and $S_{Q}$ is a Hahn localization of $\pp$, the essential supremum $E_{q}$ of the family $\{E_{q,Q}\}_{Q\in \q}$ exists. 
We set
\begin{equation*}
  h(\omega) = \sup\{q\in \mathbb{Q}\mid \omega\in E_{q}\},
\end{equation*}
using the convention that $\sup \varnothing = -\infty$.
First we check that $h$ is measurable.
Observe that for $a\in \mathbb{R}$  
\begin{equation*}
  \begin{split}
    \{h>a\} &  = \{\sup\{q \in \mathbb{Q}\mid \omega \in E_{q}\} >a\}\\
            & = \{\omega \in \Omega\mid  \exists q \in \mathbb{Q}: q>a \text{ and }\omega \in E_{q}\} = \bigcup_{q \in \mathbb{Q}\atop q >a} E_{q} \in \f.
  \end{split}
\end{equation*}

Next, we want to show that $Q(E_{q,Q}\triangle (E_{q}\cap S_{Q})) = 0$.
To this end, define $F = \compl{(S_{Q}\setminus E_{q,Q})}$, and note that $E_{q,R}\setminus F = E_{q,R}\cap (S_{Q}\setminus E_{q,Q})$. 
This implies $Q(E_{q,R}\setminus F) = 0$, since $E_{q,R} \subseteq S_{R}$, and $R(E_{q,R}\setminus F) = 0$, since $E_{q,R}\setminus F \subseteq S_{Q}$.
Since $E_{q}$ is the essential supremum of $\{E_{q,Q}\mid Q\in \q\}$ and $Q(E_{q,Q}\setminus F) = 0$ for every $Q\in \q$, we have $Q(E_{q}\setminus F) = 0$ for every $Q\in \q$. 
Again, since $E_{q}$ is the essential supremum of $\{E_{q,Q}\mid Q\in \q\}$, we have that $Q(E_{q,Q}\setminus E_{q}) = 0$ for all $Q\in \q$. 
Since $E_{q}\cap (S_{Q}\setminus E_{q,Q}) = E_{q}\setminus F$, this implies
\begin{equation*}
  \begin{split}
    Q(E_{q,Q}\triangle (E_{q}\cap S_{Q})) & = Q(E_{q,Q}\setminus (E_{q}\cap S_{Q})) + Q((E_{q}\cap S_{Q})\setminus E_{q,Q}) \\
                                          &\leq Q(E_{q,Q}\setminus E_{q}) + Q(E_{q}\cap (S_{Q}\setminus E_{q,Q})) = 0. 
  \end{split}
\end{equation*}
Hence,
\begin{equation*}
  H_{Q} = \bigcup_{q\in \mathbb{Q}}E_{q,Q}\triangle (E_{q}\cap S_{Q})
\end{equation*}
is a $Q$ nullset.
In fact, $R(H_{Q}) = 0$ for every $R\in \q$, since $H_{Q}\subseteq S_{Q}$.

Now observe that for all $q\in \mathbb{Q}$ and $\omega\in S_{Q}\setminus H_{Q}$, we have
\begin{equation}\label{eq:set-equiv}
  \omega \in E_{q} \iff \omega \in E_{q,Q}.
\end{equation}
To see this, we first assume that $\omega\in S_{Q}\setminus H_{Q}$, but $\omega \not \in E_{q}$. If $\omega$ were in $E_{q,Q}$ it would be in $H_{Q}$, which is a contradiction.
On the other hand, assume that $\omega\in S_{Q}\setminus H_{Q}$, but $\omega \not \in E_{q,Q}$. If $\omega$ were in $E_{q}$, it would be in $E_{q}\cap S_{Q}$ and hence in $H_{Q}$, which is a contradition. 
The equivalence in \eqref{eq:set-equiv} implies that $h= g_{Q}$ on $S_{Q}\setminus H_{Q}$: If $\omega\in S_{Q}\setminus H_{Q}$, we have
\begin{equation*}
  \begin{split}
    h(\omega) &= \sup \{q\in \mathbb{Q}\mid \omega\in E_{q} \} = \sup\{q\in \mathbb{Q}\mid \omega \in E_{q,Q}\} \\
                       & = \sup\{q\in \mathbb{Q}\mid g_{Q}(\omega)\geq q\} = g_{Q}(\omega).
  \end{split}
\end{equation*}
In other words, $h = g_{Q}$ $Q$-a.s. on $S_{Q}$.
The map $l$ is an isometry by Lemma 3.4 in \cite{cuchiero2025quantitativehalmossavagetheoremsrobust}.
We still have to show that $h\in \mathbb{L}^{\infty}(\pp)$ and $x'(\mu) = \int h d \mu$ for all $\mu \in \mathrm{ca}(\pp)$.
Since every measure $\mu\in \mathrm{ca}(\pp)$ has a Jordan-Hahn decomposition into two non-negative measure $\mu^{+},\mu^{-}$ with $\mu = \mu^{+}-\mu^{-}$, it is enough to consider non-negative measures $\mu\in \mathrm{ca}(\pp)$.
For any such $\mu$, there are $P\in \pp$,  $\lambda_{k}\geq 0$, and $Q_{k}\in \q$ such that
\begin{equation*}
  \mu \ll P \ll \sum_{k=1}^{\infty} \lambda_{k} Q_{k}.
\end{equation*}
This implies 
\begin{equation*}
  \mu(A) = \sum_{k=1}^{\infty}\mu(A\cap S_{Q_{k}})
\end{equation*}
for all $A\in \f$.
We are going to show that $h \in L^{\infty}(\mu)$.

Recall that $h\ind_{S_{Q}} = g_{Q}$ $Q$-a.s. for every $Q\in \mathcal{Q}$. 
This implies 
\begin{equation*}
  \|h\ind_{S_{Q_{k}}}\|_{L^{\infty}(Q_{k})} = \|g_{Q_{k}}\|_{L^{\infty}(Q_{k})} = \|x'\circ T_{Q_{k}}\|_{\mathrm{ca}(Q)'} \leq \|x'\|\|T_{Q_{k}}\| = \|x'\|. 
\end{equation*}
Hence, we know that $|h|\ind_{Q_{k}} \leq \|x'\|$ $Q_{k}$-a.s for every $k\geq 1$. 
In particular, we have
\begin{equation*}
  \sum_{k=1}^{\infty}\lambda_{k} Q_{k}(|h| > \|x'\|) = 0,
\end{equation*}
and since $P\ll \sum_{k=1}^{\infty}\lambda_{k} Q_{k}$, we have that $P(|h| > \|x'\|) = 0$, and hence $\mu(|h| >\|x'\|) = 0$, i.e., $h\in L^{\infty}(\mu)$. 

We write $\mu_{k}$ for the non-negative finite measure $\mu_{k}(A) = \mu(A\cap S_{Q_{k}})$.
Note that the series $\sum_{k=1}^{\infty}\mu_{k} = \mu$ converges in total variation norm. Indeed, Lemma \ref{lemma:TV-norm-of-singular-sum} implies
\begin{equation*}
  \biggl\|\mu - \sum_{k=1}^{K}\mu_{k}\biggr\| = \biggl\|\sum_{k=K+1}^{\infty}\mu_{k}\biggr\| = \sum_{k=K+1}^{\infty}\|\mu_{k}\| \to 0,
\end{equation*}
as $K\to \infty$.
This implies in particular
\begin{equation*}
x'(\mu)=  \sum_{k=1}^{\infty}x'(\mu_{k}).
\end{equation*}
Since $h$ is $\mu$-a.s bounded, $h\in L^{1}(\mu)$, and hence 
\begin{equation*}
  \int h d \mu = \sum_{k=1}^{\infty}\int h\ind_{S_{Q_{k}}}d \mu = \sum_{k=1}^{\infty}\int h d \mu_{k}.
\end{equation*}
Since $\mu_{k} \ll Q_{k}$, and $g_{Q_{k}} = h$ $Q_{k}$-a.s. on $S_{Q_{k}}$ (and hence $\mu_{k}$-a.s), we have
\begin{equation*}
   \int h d \mu = \sum_{k=1}^{\infty}\int h d \mu_{k} = \sum_{k=1}^{\infty}\int g_{Q_{k}} d \mu_{k} = \sum_{k=1}^{\infty}x'(\mu_{k}) = x'(\mu).
\end{equation*}
Finally, by Lemma~\ref{lemma:h_in_Linft} above, we get that $h\in \mathbb{L}^{\infty}(\pp)$  and $h$ is unique $\pp$-q.s.
\end{proof}

\subsection{Proofs of the remaining results}\label{sec:preliminaries:proofs}
\begin{proof}[Proof (Lemma \ref{lemma:char-of-sigma-convex-dominated-families}).]
First, lets assume that there is a countable family of pairwise singular probability measures $\q$ such that $\q\lll \pp\lll \sconv(\q)$. 
Since $\q$ is countable, we can write $\q = \{Q_{1},Q_{2},\dots \}$, and define the measure
\begin{equation*}
  Q = \sum_{k=1}^{\infty} 2^{-k} Q_{k}. 
\end{equation*}
Clearly $\sconv(\q)\ll Q$, and hence $\pp$ is dominated by a $\sigma$-finite measure. 

On the other hand, assume that $\pp$ is dominated by a $\sigma$-finite measure $\mu$. 
Since $\mu$ is $\sigma$-finite, there are pairwise disjoint sets $S_{1},S_{2},\dots, \in \f$, such that  $\Omega=\bigcup_{j=1}^{\infty}S_j$ and $\mu(S_{j})<\infty$. 
For $j\in \mathbb{N}$, we define
\begin{equation*}
  \pp_{j} = \biggl\{ \frac{P(S_{j}\cap \cdot)}{P(S_{j})}\, \big|\, P(S_{j})>0 \biggr\}. 
\end{equation*}
Note that $\pp_{j}$ is still dominated by $\mu$.
The Halmos-Savage Theorem \cite{halmos-savage} implies that there is a non-trivial (i.e., $\lambda_{k}>0$) countable convex combination
\begin{equation*}
P_{j}^{*}(\cdot) = \sum_{k=1}^{\infty} \lambda_{k} \frac{P_{k}^{j}(S_{j}\cap \cdot)}{P_{k}^{j}(S_{j})},
\end{equation*}
of measures $P_{k}^{j}\in \pp$, with $P_{k}^{j}(S_{j})>0$, such that 
\begin{equation*}
 P_{j}^{*} \ll \pp_{j} \ll P_{j}^{*}. 
\end{equation*}
We define the probability measure $Q_{j}$ via
\begin{equation*}
  Q_{j}(F) = \frac{\sum_{k=1}^{\infty} 2^{-k} P_{k}^{j}(S_{j}\cap F)}{\sum_{k=1}^{\infty} 2^{-k} P_{k}^{j}(S_{j})},\quad F\in \f. 
\end{equation*}
Note that the $Q_{j}$'s are pairwise singular, since the $S_{j}$'s are pairwise disjoint.
Clearly, for every $Q_{j}$ there is a $P_{j}\in \pp$ such that $Q_{j} \ll P_{j}$, since $\pp$ is $\sigma$-convex, e.g., pick
\begin{equation*}
  P_{j} = \sum_{k=1}^{\infty} 2^{-k} P_{k}^{j}\in \pp,
\end{equation*}
and hence, $\q \coloneqq\{Q_{j}\mid j\in\mathbb{N} \} \lll \pp$. 

On the other hand, let $P\in \pp$. 
We define 
\begin{equation*}
  Q = \sum_{j=1}^{\infty} 2^{-j} Q_{j} \in \sconv(\q). 
\end{equation*}
We are going to show that $P\ll Q$. 
Assume that $Q(F) = 0$ for some $F\in \f$. 
This implies that $Q_{j}(F) = 0$ for all $j$, and hence 
\begin{equation*}
  \frac{P(S_{j}\cap F)}{P(S_{j})} = 0, \quad \text{for every $j\in \mathbb{N}$, such that } P(S_{j})>0.
\end{equation*}
This immediately implies 
\begin{equation*}
  P(F) = \sum_{j=1}^{\infty} P(S_{j}\cap F) = 0,
\end{equation*}
and hence $P \lll \sconv(\q)$. 

\end{proof}

\begin{proof}[Proof (Proposition \ref{thm:char:dedekind-complete}).]
  We start with $1 \Rightarrow 2$. 
  By Theorem \ref{thm:duality} we have $\mathrm{ca}(\pp)'\simeq \LL(\pp)$.
  Note that $\mathrm{ca}(\pp)$ is a Banach lattice, with the lattice order $\mu \leq \nu$ if and only if $\mu(F) \leq \nu(F)$ for all $F\in \f$ (cf. \cite{liebrich-model-uncertainty:reverse-approach}, section 2). 
  The least upper bound of two signed measures $\mu, \nu\in \mathrm{ca}(\pp)$ is given by $\mu \vee \nu = \frac{1}{2}\bigl( \mu+\nu + |\mu-\nu| \bigr)$, and the largest lower bound $\mu\wedge \nu$ is given by $\mu \wedge \nu = \frac{1}{2} \bigl(\mu + \nu - |\mu-\nu| \bigr)$, where $|\cdot|$ denotes the total variation measure. 
  Since $\mathrm{ca}(\pp)$ is a Banach lattice, Theorem 5.3 in \cite{liebrich-model-uncertainty:reverse-approach} implies that $\LL$ is Dedekind complete. 
  
  Next, we move on to $2\Rightarrow 3$. We associate the functions $g_{Q}$ with their $\LL$-equivalence classes. 
  The space $\LL$ is a Banach lattice with the order given by $h \leq g$ if and only if $h\leq g$ $\pp$-q.s. 
  If $\LL$ is Dedekind complete and $\mathcal{G} = \{g_{Q}\}_{Q\in\q}$ is as in the statement, we pick the (order)-supremum $g = \sup \mathcal{G}$ (i.e., the least upper bound on the set $\mathcal{G}$ in $\LL$, which exists by Dedekind completeness).
  We want to show that $g = g_{Q}$ on $S_{Q}$ $\pp$-q.s., i.e., we want to show that 
  \begin{equation*}
    P(S_{Q}\cap \{g\neq g_{Q}\}) = 0, \quad \text{for all }P\in \pp. 
  \end{equation*}
  Since $g$ is an upper bound on $\mathcal{G}$, we have $g_{Q}\leq g$ $\pp$-q.s., and hence we only need to show that $S_{Q}\cap \{g_{Q}<g\}$ is a $\pp$-null set. 
  Assume that this is not the case, i.e., there is a $P\in \pp$ with $P(S_{Q}\cap \{g_{Q}< g\}) >0$. 
  We set
  \begin{equation*}
    h = g_{Q}\ind_{S_{Q}} + g\ind_{\compl{S_{Q}}}, 
  \end{equation*}
and note that $h\leq g$, and $g_R \leq h$, $\pp$-q.s. for all $R\in \q$. 
Indeed, for $R=Q$, there is nothing to show, and for $R\neq Q$, we have $g_R= 0 \leq  g_Q = h$, on $S_Q$, and $g_Q \leq g = h $ on $\compl{S_Q}$, $\pp$-q.s. 
  Note that $\{h<g\} \subseteq S_{Q}$. 
  Furthermore, we have
  \begin{equation*}
    \{h < g\} = \{ \omega \in S_{Q}\mid h(\omega) < g(\omega)\} = \{\omega \in S_{Q}\mid g_{Q}(\omega) < g(\omega)\} = S_{Q}\cap \{g_{Q}<g\}. 
  \end{equation*}
  This implies that $P(h<g) = P(S_{Q}\cap \{g_{Q}<g\})>0$, and hence $g$ cannot be the least upper bound on $\mathcal{G}$ in $\LL$, leading to a contradiction. 

  Finally, we show $3\Rightarrow 1$. Let $E_{Q}\in \f$ with $E_{Q}\subseteq S_{Q}$, and set $g_{Q} = \ind_{E_{Q}}$.
  We pick $g$ as in the assumption, i.e., $g|_{S_{Q}} = \ind_{E_{Q}}$, and set $E = g^{-1}(\{1\})$. 
  Note that on $S_{Q}$ we have $\ind_{E} = \ind_{E_{Q}}$. 
  Now, we check that for all $Q\in \q$, $Q(E_{Q}\setminus E) = 0$. 
  Indeed, on $S_{Q}$ we have $g = g_{Q} = \ind_{E_{Q}}$, and hence
  \begin{equation*}
    \ind_{E_{Q}\setminus E} = \ind_{E_{Q}} \ind_{\compl{E}} = \ind_{E_{Q}} (1- \ind_{E}) = \ind_{E_{Q}}(1-\ind_{E_{Q}}) = 0.
  \end{equation*}
  This implies
  \begin{equation*}
    Q(E_{Q}\setminus E) = Q(S_{Q}\cap (E_{Q}\setminus E)) = \int_{S_{Q}} \ind_{E_{Q}\setminus E} = 0. 
  \end{equation*}
  Next, assume there is a $F\in \f$ with $Q(E_{Q}\setminus F) = 0$ for all $Q\in \q$. 
  We need to show that $Q(E\setminus F) = 0$ for all $Q\in \q$. 
  This is again a simple consequence of the fact that $\ind_{E} = \ind_{E_{Q}}$ on $S_{Q}$. 
  Indeed,
  \begin{equation*}
    Q(E\setminus F) = Q(S_{Q}\cap (E\setminus F)) = \int_{S_{Q}} \ind_{E}\ind_{\compl{F}} = \int_{S_{Q}} \ind_{E_{Q}}\ind_{\compl{F}} = Q(E_{Q}\setminus F) = 0. 
  \end{equation*}

\end{proof}

\begin{lemma}\label{lemma:completion-sigma-convex}
Let $(\Omega, \f, \pp)$ be a statistical model, then 
\begin{equation*}
  \f^{\pp} = \f^{\sconv(\pp)}. 
\end{equation*}
\end{lemma}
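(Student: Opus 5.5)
We prove the two inclusions separately, using only the definition $\f^{\pp} = \bigcap_{P\in\pp}\f^{P}$ and the elementary structure of $P$-completions.

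\emph{The inclusion $\f^{\sconv(\pp)}\subseteq \f^{\pp}$.} This follows from $\pp\subseteq\sconv(\pp)$ (take $\lambda_1=1$). The intersection defining $\f^{\sconv(\pp)}$ runs over a larger index set, so it is contained in $\bigcap_{P\in\pp}\f^{P}=\f^{\pp}$.

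\emph{The inclusion $\f^{\pp}\subseteq \f^{\sconv(\pp)}$.} It suffices to fix $Q=\sum_{k\ge1}\lambda_kP_k\in\sconv(\pp)$ and show $\f^{\pp}\subseteq\f^{Q}$. We may discard the indices with $\lambda_k=0$.

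1. Let $A\in\f^{\pp}$. Then $A\in\f^{P_k}$ for every $k$.
2. By the standard characterization of the completion, for each $k$ there are $F_k,G_k\in\f$ with $F_k\subseteq A\subseteq G_k$ and $P_k(G_k\setminus F_k)=0$. This characterization is equivalent to the form $F\cup Z$ in \eqref{eq:P-completion}, after enlarging $Z$ to a null set $N$ and setting $F_k=F\setminus N$, $G_k=F\cup N$.
3. Set $F=\bigcup_k F_k$ and $G=\bigcap_k G_k$. Both lie in $\f$, and $F\subseteq A\subseteq G$.
4. For each $j$ we have $G\setminus F\subseteq G_j\setminus F_j$, hence $P_j(G\setminus F)=0$ for all $j$.
5. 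Therefore $Q(G\setminus F)=\sum_j\lambda_jP_j(G\setminus F)=0$.
6. Writing $A=F\cup(A\setminus F)$ with $A\setminus F\subseteq G\setminus F\in\mathcal{N}_Q$ shows $A\in\f^{Q}$.

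Since $Q\in\sconv(\pp)$ was arbitrary, $A\in\f^{\sconv(\pp)}$.

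The only point needing care is step 3: a single sandwich $F\subseteq A\subseteq G$ must work simultaneously for all countably many $P_k$. This is why we take the countable union of the inner sets and the countable intersection of the outer sets, both of which stay in $\f$. The remaining steps are routine.
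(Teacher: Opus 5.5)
Your proof is correct and follows essentially the same route as the paper, which also takes the countable union $A_R=\bigcup_k A_k$ of the inner measurable sets; where you intersect the outer hulls $G_k$, the paper intersects the polar parts, $\bigcap_k Z_k$. Your sandwich formulation has the small advantage of showing explicitly why the exceptional set is $Q$-null, via the measurable set $G\setminus F$, a step the paper leaves implicit.
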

\begin{proof}
  Since $\pp \subseteq \sconv(\pp)$, we have $\f^{\sconv(\pp)} \subseteq \f^{\pp}$. 

  On the other hand, let $A\in \f^{\pp}$. 
  By \eqref{eq:P-completion}, we have
  \begin{equation*}
    \f^{\sconv(\pp)} = \bigcap_{R\in \sconv(\pp)} \{F \cup Z\mid F\in \f, Z\in \mathcal{Z}_{R}\}. 
  \end{equation*}
  In other words, we need to show that for every $R = \sum_{k=1}^{\infty} \lambda_{k} P_{k}\in \sconv(\pp)$, there is an $A_{R}\in \f$ and a $Z_{R}\in \mathcal{Z}_{R}$ with $A = A_{R}\cup Z_{R}$. 
  Since $A\in \f^{\pp}$, we know that for every $k\geq 1$, there is an $A_{k}\in \f$ and a $Z_{k}\in \mathcal{Z}_{P_{k}}$ with $A = A_{k}\cup Z_{k}$. 
  We define
  \begin{equation*}
    A_{R} = \bigcup_{k\geq 1} A_{k} \quad \text{and} \quad Z_{R} = \bigcap_{k\geq 1} Z_{k},
  \end{equation*}
  and note that 
  \begin{equation*}
    \begin{split}
      A\setminus A_{R} & = A\cap  \compl{\biggl( \bigcup_{k\geq 1} A_{k} \biggr)} = \bigcap_{k\geq 1} (A\cap \compl{A_{k}}) \subseteq \bigcap_{k\geq 1} Z_{k} = Z_{R}. 
    \end{split}
  \end{equation*}
  This implies $A = A_{R} \cup Z_{R}$, with $A_{R}\in \f$, and $Z_{R}\in \mathcal{Z}_{R}$, and hence $A\in \f^{\sconv(\pp)}$. 
  \end{proof}

  \begin{lemma}\label{lemma:E/EQ}
    Let $(\Omega, \mathcal{F}, \pp)$ be Hahn-localizable, with localization $\mathcal{Q}$, and support sets $S_{Q}$. 
    Let $E_{Q} \in \mathcal{F}$, with $E_{Q} \subseteq S_{Q}$ and denote with $E$ the essential supremum of the $E_{Q}$'s. 
    Then $Q(E\setminus E_{Q} ) =0$ for all $Q\in \mathcal{Q}$. 
  \end{lemma}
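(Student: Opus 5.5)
The plan is to apply property b) of Definition \ref{def:hahn-loc} to a cleverly chosen test set $F$; this is the same trick already used in the proof of Theorem \ref{thm:duality}. Fix $Q\in\q$ and set
\begin{equation*}
  F = \compl{(S_{Q}\setminus E_{Q})} = \compl{S_{Q}}\cup E_{Q}\in\f .
\end{equation*}
Since $E_{Q}\subseteq S_Q$ and $E\setminus F = E\cap(S_Q\setminus E_Q)$, we have
\begin{equation*}
  Q(E\setminus E_Q) = Q(E\cap S_Q\setminus E_Q) + Q(E\cap\compl{S_Q}) = Q(E\setminus F),
\end{equation*}
where the second term vanishes because $Q(S_Q)=1$. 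So it suffices to show $Q(E\setminus F)=0$. By property b), this follows once we know $R(E_R\setminus F)=0$ for every $R\in\q$.

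We check this in two cases.
\begin{itemize}
  \item If $R=Q$, then $E_Q\setminus F=\varnothing$, since $E_Q\subseteq F$.
  \item If $R\neq Q$, then $E_R\setminus F = E_R\cap(S_Q\setminus E_Q)\subseteq S_Q$. Hence $R(E_R\setminus F)\le R(S_Q)=\delta_{RQ}=0$ by property (1) of Definition \ref{def:hahn-loc}.
\end{itemize}
Thus the hypothesis of b) holds for this $F$. It gives $R(E\setminus F)=0$ for all $R\in\q$, and in particular $Q(E\setminus E_Q)=Q(E\setminus F)=0$.

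The only real step is choosing $F$ so that it contains $E_Q$ while excluding exactly the part of $S_Q$ outside $E_Q$. The remaining verifications use only $Q(S_R)=\delta_{QR}$ and are routine, so I expect no serious obstacle.
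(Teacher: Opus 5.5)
Your proof is correct and follows essentially the paper's argument: both apply property b) of Definition~\ref{def:hahn-loc} to a test set that removes $S_Q\setminus E_Q$, and both use $R(S_Q)=0$ for $R\neq Q$ to verify its hypothesis. The only difference is that the paper argues by contradiction with the test set $F=E\setminus(S_Q\cap E_Q^{\mathsf{c}})$, which also needs property a) to control $E_Q\setminus F$; your choice $F=(S_Q\setminus E_Q)^{\mathsf{c}}$ gives a direct proof that uses b) alone.
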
 

  \begin{proof} 
    Assume there is a $R\in \mathcal{Q}$ with $R(E\setminus E_{R})>0$. 
    Set
    \begin{equation*}
      F = E\setminus (S_{R}\cap \compl{E_{R}}).
    \end{equation*}
     Recall that  $Q(E_Q\setminus E)=Q(E_Q\cap\compl{E})=0$ for all $Q\in\mathcal{Q}$ by the defining properties of the essential supremum $E$. Further,
    note that  $\compl{F}=\compl{E}\cup(S_R\cap \compl{E_R})$, and hence
    \begin{equation*}
      R(E_{R}\setminus F) = R(E_{R}\cap \compl{F}) \leq R(E_{R}\cap \compl{E}) + R(E_{R}\cap \compl{E}_{R}) = 0. 
    \end{equation*}
    For $Q\in \mathcal{Q}\setminus \{R\}$, we have
    \begin{equation*}
      Q(E_{Q}\setminus F) = Q(E_{Q}\cap \compl{F}) \leq Q(E_{Q}\cap \compl{E}) + Q(E_{Q}\cap S_{R}) = 0. 
    \end{equation*}
    By the defining properties of an essential supremum, this implies
     \begin{equation*}
       Q(E\setminus F) = 0
     \end{equation*}
      for all $Q\in \mathcal{Q}$. 
     In particular, we get for $Q=R$  that $R(E\setminus F)=0$.  Observe that further
     \begin{equation*}
     R(E\setminus E_{R})  = R(S_R\cap E\cap \compl{E_R})=R(E\setminus F)= 0,
     \end{equation*}
     which is a contradiction to $R(E\setminus E_{R}) >0$. 
  \end{proof}

\begin{proof}[Proof (Lemma \ref{lemma:pre-hahn-loc-implies-pre-hahn-loc-of-closure}).]
We start by showing $\fq \subseteq \f^{\pp}$, i.e., we need to show that for every $A\in \fq$ and $P\in \pp$ there are $A_{P}\in \f$ and $Z_{P}\in \mathcal{Z}_{P}$ with $A = A_{P}\cup Z_{P}$. 
Fix $A\in \fq$ and $P\in \pp$. 
By Lemma \ref{lemma:completion-sigma-convex} we have $\fq = \f^{\sconv(\q)}$.
Since $\pp\lll\sconv(\q)$, there is a $R\in \sconv(\q)$ with $P\ll R$. 
Since $A\in \fq = \f^{\sconv(\q)}$, there are $A_{R}\in \f$ and $Z_{R}\in \mathcal{Z}_{R}$ such that $A = A_{R}\cup Z_{R}$. 
Since $P\ll R$, we also have $Z_{R}\in \mathcal{Z}_{P}$, and hence $A\in \f^{P}$ for every $P\in \pp$, i.e., $A\in \f^{\pp}$. 
The same argument shows that $\f^{\pp} \subseteq \fq$.

We move on to the second point. 
Clearly, $Q(S_{R}) = \delta_{QR}$ for $Q,R\in \q$ is not affected by taking the $\pp$-closure of $\f$, since $S_{Q}\in \f$. 
Hence, we only need to show that $\q\lll \pp \lll \sconv(\q)$. 
Let us start with $\pp \ll \sconv(\q)$. 
We need to show that for all $P\in \pp$, there is a $R\in \sconv(\q)$ such that for all $A\in \f^{\pp}$, we have $R(A) = 0$ implies $P(A) = 0$.
We already know that for all $P\in \pp$, there is an $R\in \sconv(\q)$ such that for all $A\in \f$, we have $R(A) = 0$ implies $P(A) = 0$.
Hence, the difficulty lies in extending this property from $\f$ to $\f^{\pp}$. 
Assume that $R(A) = 0$ for some $A\in \f^{\pp}$.
We want to show that $P(A) = 0$. 
Since $\f^{\pp} = \fq = \f^{\sconv(\q)}$ by the first part of this lemma and Lemma \ref{lemma:completion-sigma-convex}, we get that there are $A_{R}\in \f$ and $Z_{R} \in \mathcal{Z}_{R}$ such that $A = A_{R}\cup Z_{R}$.
Hence, there is a $N_{R}\in \mathcal{N}_{R}$ with $Z_{R} \subseteq N_{R}$. 
Since $R(A) = R(A_{R}) = 0$ and $R(N_{R}) = 0$, and both $A_{R}$ and $N_{R}$ are elements of $\f$, we get $P(A_{R}) = P(N_{R}) = 0$. 
This implies $P(A) = P(A_{R}) = 0$, whenever $R(A) = 0$. 
The same argument shows that $\q\lll \pp$ as measures on $\f^{\pp}$, finishing the proof.

Now, lets assume that $(\Omega, \f, \pp)$ is Hahn-localizable. 
We have already shown that this implies that $(\Omega, \f^{\pp}, \pp)$ is pre-Hahn-localizable, and $\f^{\pp} = \fq$.
We have to show that every family $E_{Q}\in \f^{\pp}$ with $E_{Q} \subseteq S_{Q}$ admits an essential supremum, i.e., a set $S\in \f^{\pp}$ with
\begin{enumerate}
\item $Q(E_{Q}\setminus S) = 0$ for all $Q\in \q$, and
\item if for some $F\in \f^{\pp}$ we have $Q(E_{Q}\setminus F) = 0$ for all $Q\in \q$, then we also have $Q(S\setminus F) = 0$ for all $Q\in \q$. 
\end{enumerate}
Since $E_{Q} \in \f^{\pp} = \fq$ there are $E'_{Q}\in \f$ and $Z_{Q}\in \mathcal{Z}_{Q}$ with
\begin{equation*}
  E_{Q} = E_{Q}' \cup Z_{Q} \subseteq S_{Q}.
\end{equation*}
We define $S' \in \f$ to be the essential supremum of the family $\{E_{Q}'\}_{Q\in \q}$, which exists, since $(\Omega, \f, \pp)$ is Hahn-localizable, and set $Z = \bigcup_{Q\in \q} Z_{Q}$. 
Note that $Z$ is a $Q$-polar for every $Q$, since $Z = Z_{Q} \cup Z_{Q}'$ with
\begin{equation*}
  Z_{Q}'= \bigcup_{R\in \q\setminus \{Q\}} Z_{R} \subseteq \compl{S_{Q}}, 
\end{equation*}
where we have used that the localization $\q$ has strictly disjoint supports. 
Hence, the set 
\begin{equation*}
  S \coloneqq S'\cup Z
\end{equation*}
is in $\fq = \f^{\pp}$. 
The set $S$ is our candidate for the essential supremum of the family $\{E_{Q}\}_{Q\in \q}$. 
We first show that $Q(E_{Q}\setminus S) = 0$ for every $Q\in \q$.
Since $S' \subseteq S$, and $E_{Q} = E_{Q}'$ $Q$-a.s., we have
\begin{equation*}
  Q(E_{Q}\setminus S) = Q(E_{Q}'\setminus S) \leq Q(E_{Q}'\setminus S') = 0,
\end{equation*}
for every $Q\in \q$. 
Next, assume that there is an $F\in \f^{\pp}$ such that for every $Q\in \q$, we have $Q(E_{Q}\setminus F) = 0$. 
We need to to show that this implies $Q(S\setminus F) = 0$ for all $Q\in \q$. 
Fix $Q\in \q$, and recall that $Z$ is a $Q$-nullset. 
Lemma \ref{lemma:E/EQ} implies that $Q(S'\setminus E_{Q'}) = 0$ for all $Q\in \mathcal{Q}$, which in turn yields
\begin{equation*}
  \begin{split}
    Q(S\setminus F) &= Q(S'\setminus F) = Q(S'\cap \compl{F}\cap E_{Q}') + Q(S'\cap \compl{F}\cap \compl{(E_{Q}')})\\
                    & \leq Q(E_{Q}'\setminus F) + Q(S'\setminus E_{Q}') \\
                    & = Q(E_{Q}\setminus F) + Q(S'\setminus E_{Q}') = 0, 
  \end{split}
\end{equation*}
and hence $(\Omega, \f^{\pp},\pp)$ is Hahn-localizable.

\end{proof}

\begin{proof}[Proof (Lemma \ref{lemma:equivalence_(S)_preHahnloc}).]
  First, assume that Condition~2. of the lemma holds, i.e. $\pp$ is pre-Hahn-localizable  with localization $\mathcal{Q}$.
  It is easy to see that $\mathcal{Q}$ is a disjointly supported alternative to $\pp$.
  Indeed, disjointness is clear by point 1. of Definition \ref{def:hahn-loc}, and by point 2. of Definition \ref{def:hahn-loc} it follows that $\q\ll\pp\ll\q$. 
  To see that each $Q\in\q$ is supported observe that, for $A\in\mathcal{F}$ with $Q(A\cap S_Q)=0$ we have that for $R\in\q$, $R\not=Q$, $R(A\cap S_Q)\leq R(S_Q)=0$. Hence $\sup_{R\in\q}R(A\cap S_Q)=0$, and as $\pp\ll\q$ it follows that $A\cap S_Q$ is $\pp$-polar. 
  By Remark~\ref{rem:R(Q)-at-most-countable} (\ref{eq:prob1supports}) we have that $P \bigl( \bigcup_{Q\in \q(P)} S_{Q} \bigr) = 1$. Hence, $\mathcal{P}$ satisfies Condition~1. of the lemma with supported alternative $\rrr=\q$.

Now, let us assume that $\pp$ satisfies Condition~1., i.e., it has the class (S) property, and  for every $P\in \pp$ we have $P \bigl(\bigcup_{Q\in \mathcal{R}(P)}S_{Q} \bigr) = 1$. 
We set $\q = \mathcal{R}$. 
Clearly, for every $R,Q\in \q$ we have $R(S_{Q}) = \delta_{RQ}$. Indeed $Q(S_Q)=1$ by definition and for $R\in\q$, $R\not=Q$, $R(S_Q)=R(S_R\cap S_Q)=0$  as $S_R\cap S_Q$ is $\pp$-polar and $\q\ll\pp$. Hence, we only need to show that $\q \lll \pp \lll \sconv(\q)$. 
We start with $\pp \lll \sconv(\q)$. 
Fix any $P\in \pp$, and denote with $Q_{1},Q_{2},\dots$ the measures in $\q(P)$ (as in Definition \ref{P-positiveSQ}). 
For the sake of simplicity, we assume that there are infinitely many such $Q_{k}$'s, but the same argument applies to finitely many as well. 
We claim that $P \ll Q^{*} =  \sum_{k=1}^{\infty} 2^{-k} Q_{k}$. 
Let $S = \bigcup_{Q\in \q(P)}S_{Q}$, and note that $P(\compl{S}) = 0$ by assumption. 
This implies that for every $F\in \f$
\begin{equation*}
  P(F) = P(F\cap S) = P \biggl( \bigcup_{Q\in \q(P)} (F\cap S_{Q}) \biggr) = \sum_{Q\in \q(P)} P(F\cap S_{Q}),
\end{equation*}
since the $S_{Q}$'s are $\mathcal{P}$-q.s. disjoint. 
Now, assume that $Q^{*}(F) = 0$ for some $F\in \f$, then, clearly $Q_k(F)=0$, for all $k\geq1$. 
We are going to show that $P(F\cap S_{Q_{k}}) = 0$ for all $k\geq 1$. 
Note that $Q(F\cap S_{Q_{k}}) \leq Q(S_{Q_{k}})= 0$ for all $Q\in\q\setminus \{Q_{k}\}$, and $Q_{k}(F\cap S_{Q_{k}}) = Q_{k}(F) = 0$.
This implies 
\begin{equation*}
  \sup_{Q\in \q} Q(F\cap S_{Q_{k}})  = 0,
\end{equation*}
and since $\pp\ll\q$ by assumption, we have $R(F\cap S_{Q_{k}}) = 0$ for all $R\in \pp$.
In particular, we have $P(F\cap S_{Q_{k}}) = 0$ for all $k\geq 1$. 
Hence, $P(F)=0$ and thus $P\ll Q^*$, and we get that $\mathcal{P} \lll \sconv(\q)$. 

Next, we will show that $\q \lll \pp$. 
Fix $Q\in \q$, and set
\begin{equation*}
  \pp^{Q} = \{P \in \pp\mid P(S_{Q})>0\}.
\end{equation*}
First, we note a few things:
\begin{enumerate}
  \item[(i)] $\pp^{Q}\neq \varnothing$. Indeed, if $\pp^{Q} = \varnothing$, then $\sup_{P\in \pp} P(S_{Q}) = 0$, and hence we have $\sup_{R\in \q}R(S_{Q}) = 0$, which implies $Q(S_{Q}) = 0$, contradicting $Q(S_{Q}) = 1$. 
  \item[(ii)] $\pp^{Q}$ is $\sigma$-convex. If $P_{k}\in \pp^{Q}$, and $\lambda_{k}\geq 0$ with $\sum_{k=1}^{\infty} \lambda_{k} = 1$, then there is at least one $\lambda_{k'}>0$, and hence
    \begin{equation*}
      \sum_{k=1}^{\infty} \lambda_{k} P_{k}(S_{Q}) \geq \lambda_{k'}P_{k'}(S_{Q}) > 0.
    \end{equation*}
  \item[(iii)] For all $P\in \pp^{Q}$, $P$ and $Q$ cannot be singular. Indeed, suppose there would exist $P\in \pp^{Q}$ which is singular to $Q$. 
    In other words, there are disjoint sets $A, B\in \f$ with $Q(A) =1$, $Q(B) =0$, $P(A) = 0$, and $P(B) = 1$. 
    Set $\widetilde{ B } = B \cap S_{Q}$, and note that $P(\widetilde{ B } ) = P(S_{Q}) >0$ as $P\in\pp^{Q}$. But $Q(\widetilde{ B }) \leq Q(B) = 0$. 
    Since $Q$ is supported, $Q(\widetilde{ B }) = Q(\widetilde{ B }\cap S_{Q} ) = 0$ implies $\sup_{R\in \pp} R(\widetilde{ B }) = 0$, which in turn implies $P(\widetilde{ B }) = 0$, a contradiction.
\end{enumerate}
Now, fix $P\in \pp^{Q}$, and decompose $Q$ into its absolutely continuous and singular part with respect to $P$,
\begin{equation*}
  Q = Q^{c,P} + Q^{s,P}.
\end{equation*}
Hence, we can decompose $S_{Q}$ into 
\begin{equation*}
  S_{Q} = S_{Q}^{c,P} \uplus S_{Q}^{s,P},
\end{equation*}
with $P(S_{Q}^{c,P}) = P(S_{Q})>0$. 
We set 
\begin{equation*}
  \alpha = \sup_{P\in \pp^{Q}} Q(S_{Q}^{c,P})>0,
\end{equation*}
and claim that there is a $P^{*}\in \pp^{Q}$ such that 
\begin{equation}\label{eq:lemma:equivalence-of-conditions:alpha-claim}
  \alpha = Q\bigl(S_{Q}^{c,P^{*}}\bigr). 
\end{equation}
To prove the claim, let $P_{k}\in \pp^{Q}$ such that 
\begin{equation*}
  Q(S_{Q}^{c,P_{k}})> \alpha- 2^{k},
\end{equation*}
and set $P^{*} = \sum_{k=1}^{\infty} 2^{-k} P_{k}$. 
Since $\pp^{Q}$ is $\sigma$-convex, we have $P^{*}\in \pp^{Q}$. 
Now, set $Q^{c}(F) = Q(F\cap S_{Q}^{c,P^{*}})$, where
\begin{equation*}
  S_{Q}^{c,P^{*}} = \bigcup_{k=1}^{\infty} S_{Q}^{c,P_{k}}. 
\end{equation*}
Note that this is not an abuse of notation, since $Q^{c}$ as defined above is the absolutely continuous part of $Q$ with respect to $P^{*}$. 
Indeed, assume that $P^{*}(F) =0$ for some $F\in \f$. 
Then $P_{k}(F) = 0$ for all $k\geq 1$, and hence
\begin{equation*}
  Q^{c}(F) \leq \sum_{k=1}^{\infty} Q\bigl(F\cap S_{Q}^{c,P_{k}}\bigr) = 0,
\end{equation*}
since $Q(\cdot \cap S_{Q}^{c,P_{k}})$ is the absolute continuous part of $Q$ with respect to $P_{k}$. 
Next, we show that $P^{*}\bigl(S_{Q}\setminus S_{Q}^{c,P^{*}}\bigr) = 0$.
This is a simple calculation
\begin{equation*}
  \begin{split}
    P^{*}\bigl(S_{Q}\setminus S_{Q}^{c,P^{*}}\bigr) &= \sum_{k=1}^{\infty} 2^{-k} P_{k}\biggl(S_{Q}\cap \bigcap_{l\geq 1} \compl{\bigl(S_{Q}^{c,P_{l}}\bigr)}\biggr) \\
                                          &\leq \sum_{k=1}^{\infty}2^{-k}  P_{k}\bigl(S_{Q}\cap \compl{\bigl(S_{Q}^{c,P_{k}}\bigr)}\bigr) =0.
  \end{split}
\end{equation*}
In particular, this implies $P^{*}(S_{Q}) = P^{*}\bigl(S_{Q}^{c,P^{*}}\bigr) >0$. 
Moreover, we have 
\begin{equation*}
  \begin{split}
  Q(S_{Q}^{c,P^{*}}) = \lim_{n\to \infty} Q \biggl( \bigcup_{k=1}^{n} S_{Q}^{c,P_{k}} \biggr) \geq \lim_{n\to \infty} Q \bigl( S_{Q}^{c,P_{n}} \bigr) = \alpha,
  \end{split}
\end{equation*}
which proves the claim in \eqref{eq:lemma:equivalence-of-conditions:alpha-claim}. 

Next, we are going to show that $\alpha=1$. 
If this is not the case, i.e., if $\alpha = Q(S_{Q}^{c,P^{*}}) < 1$, then we set $C = S_{Q}\setminus S_{Q}^{c,P^{*}}$. Then $Q(C)=Q( (S_{Q}^{c,P^{*}})^c)=1-\alpha>0$.
Since $\q \ll \pp$, there exists $\widetilde{ P }\in \pp$ with $\widetilde{ P }(C)>0$. 
In particular, this implies $\widetilde{ P }(S_{Q}) \geq \widetilde{ P }(C) >0$ and hence $ \widetilde{ P }\in \pp^Q$. Further
\begin{equation}\label{eq:lemma:equivalence-of-conditions:P-tilde>0}
  \widetilde{ P }\bigl( C\cap S_{Q}^{c,\widetilde{ P }} \bigr) = \widetilde{ P }(C) >0,
\end{equation}
as $\widetilde{ P }(S_{Q}) = \widetilde{ P }\bigl( S_{Q}^{c,\widetilde{ P }}\bigr)$.
Suppose that $Q\bigl(C\cap S_{Q}^{c, \widetilde{ P }}\bigr)  =0$.
Since $Q$ is supported and $C\cap S_{Q}^{c,\widetilde{ P }} \subseteq S_{Q}$, this implies
\begin{equation*}
  \sup_{R\in \pp} R\bigl(C\cap S_{Q}^{c, \widetilde{ P }}\cap S_{Q}\bigr) = \sup_{R\in \pp} R\bigl(C\cap S_{Q}^{c,\widetilde{ P }}\bigr) = 0,
\end{equation*}
and hence $\widetilde{ P }(C\cap S_{Q}^{c,\widetilde{ P }}) = 0$, which is a contradiction to \eqref{eq:lemma:equivalence-of-conditions:P-tilde>0}. 
Hence, we have $Q\bigl(C\cap S_{Q}^{c,\widetilde{ P }}\bigr) >0$. 
Next, we set $P_{0} = \frac{1}{2}(P^{*} + \widetilde{ P})\in \pp^{Q}$ and note that $S_{Q}^{c,P_{0}} = S_{Q}^{c,P^{*}} \cup (C\cap S_{Q}^{c,\widetilde{ P }})$. Moreover 
$$Q(S_{Q}^{c,P_{0}})=Q(S_{Q}^{c,P^{*}})+Q(C\cap S_{Q}^{c,\widetilde{ P }})>\alpha$$ as $Q(C\cap S_{Q}^{c,\widetilde{ P }})>0$. This is a contradiction to the definition of $\alpha$.
Hence $\alpha =1$, and $Q\ll P^{*}$.

\end{proof}

\section{Hahn Extension}\label{sec:hahn_extension}
Let us start this section with an example. 
\begin{example}
  Consider the space $(\Omega, \f) = (\mathbb{R}, \mathcal{B}(\mathbb{R}))$, where $\mathcal{B}(\mathbb{R})$ denotes the Borel $\sigma$-algebra on $\mathbb{R}$, and  set $\q = \{\delta_{x}\mid x\in \mathbb{R}\}$, as well as $\pp = \sconv(\q)$. 
  Note that $\pp$ is pre-Hahn-localizable, but not Hahn-localizable. 
  This is mainly a measurability issue.
  Here, the sets in the definition of Hahn-localizabilty can be of the form $\bigcup_{x\in A}\{x\} = A$ for arbitrary subsets $A \subseteq \mathbb{R}$. 
 Note that any $P = \sum_{k=1}^{\infty} \lambda_{k} \delta_{x_{k}} \in \pp$ can assign probabilities to arbitrary subsets $A \subseteq \mathbb{R}$ via
 \begin{equation*}
   P(A) = \sum_{\substack{k\geq 1\\ x_{k}\in A}} \lambda_{k},
 \end{equation*}
 and not just Borel sets. 
 In this sense, the non-Hahn-localizability of $\pp$ is a consequence of the arguably too restrictive choice of the $\sigma$-algebra $\mathcal{B}(\mathbb{R})$. 
 If we define $\q$ (and hence $\pp$) instead on the powerset of $\mathbb{R}$, then $\pp$ is Hahn-localizable. 
\end{example}

The previous example begs the question: Can any pre-Hahn-localizable set of probability measures be made Hahn-localizable by extending the $\sigma$-algebra in a canonical way?
The somewhat surprising answer to this question is yes, almost. 

Throughout this section, we assume that $\pp$ is a pre-Hahn-localizable set of probability measures on a measurable space $(\Omega, \f)$, and that the supports of its localization $\q$ are pairwise disjoint.
The assumption of strictly disjointly supported localization $\q$ is mainly imposed for ease of presentation. 
We define the \textit{Hahn-extension} of $\f$ with respect to $\q$ as 
\begin{equation}\label{def:sigma-algebra-hahn-extension}
  \he = \sigma\biggl(\f\cup \biggl\{\bigcup_{Q\in \q} E_{Q}: E_{Q}\in \f, E_{Q}\subseteq S_{Q}\biggr\}\biggr).
\end{equation}
If we want to emphasize the dependence of $\he$ on $\f$, we write $\hef$. 
Note that the Hahn extension $\he$ depends on the choice of the family $\{S_{Q}\}_{Q\in \q}$, which is in general not unique. 
However, the sets $S_{Q}$ are unique up to $\pp$-polars, and we show in Corollary \ref{cor:hahn-ext-unique} that the $\pp$-completion of $\mathcal{H}_{\mathcal{F}}^{\mathcal{Q}}$ does not depend on the specific choice of the localization $\mathcal{Q}$. 
For the remainder of this chapter we fix a choice of supports $\{S_{Q}\}_{Q\in \q}$.

Let us give a brief overview of the remainder of this section.
Lemma \ref{lemma:hahn-closure} essentially states that any set $A\in \hef$ is in $\f$ \emph{locally}, i.e., on any support $S_{Q}$.
Using this lemma, Theorem \ref{thm:hahn-extension-is-well-defined} guarantes that any signed measure $\mu\in \mathrm{ca}(\pp)$ can be extended to a finite signed measure $\mu^{\q}$ on $\hef$. 
This allows us to define the set $\pq = \{P^{\q}\mid P\in \pp\}$. 
In Lemma \ref{lemma:hahn-extension-is-hahn-localizable} we show that $\pq$ on $\hef$ is Hahn-localizable with localization $\qq$ and  the corresponding support sets $S_{Q}$. 
The main consequence of this lemma is Theorem \ref{thm:duality-wo-hahn-loc}, in which we show that $\mathrm{ca}(\f,\pp)$ is isometrically isomorphic to $\mathrm{ca}(\hef, \pq)$, and hence 
\begin{equation*}
  \mathrm{ca}(\f,\pp)' \simeq \LL(\hef, \pq).
\end{equation*}
Theorem \ref{thm:duality-wo-hahn-loc} complements Theorem 5.3 of \cite{liebrich-model-uncertainty:reverse-approach}, since it gives the dual space of $\mathrm{ca}(\f,\pp)$ in the case where $\LL(\f,\pp)$ is not Dedekind complete. 
In Section \ref{sec:hahn-ext:min}, we show that the Hahn-extension is the smallest extension of $\mathcal{F}$ (up to $\mathcal{P}$-polars) for which $\LL(\pp)$ is the dual space of a normed space. 
As an immediate consequence of this, we get that the $\mathcal{P}$-completion of $\mathcal{H}_{\mathcal{F}}^{\mathcal{Q}}$ does not depend on the choice of support sets, and hence, is the smalles $\pp$-complete extension of the original $\sigma$-algebra for which $L^{\infty}(\pp)$ is the dual space of a normed space.

\begin{lemma}\label{lemma:hahn-closure}
  Let $\mathcal{P}$ be a pre-Hahn-localizable set of probability measures on $(\Omega, \mathcal{F})$ with strictly disjointly supported localization $\q$. 
For every $A\in \hef$ and $Q\in \q$, we have $A\cap S_{Q}\in \f$.
\end{lemma}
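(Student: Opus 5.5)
We use the standard good-sets argument. Define
\begin{equation*}
  \mathcal{G} = \{A \subseteq \Omega \mid A\cap S_{Q}\in \f \text{ for all } Q\in \q\}.
\end{equation*}
We show that $\mathcal{G}$ is a $\sigma$-algebra containing all generators of $\hef$ from \eqref{def:sigma-algebra-hahn-extension}. It then follows that $\hef\subseteq \mathcal{G}$, which is exactly the claim.

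First we check that $\mathcal{G}$ is a $\sigma$-algebra. We have $\Omega\in\mathcal{G}$, since $\Omega\cap S_Q = S_Q\in\f$. If $A\in\mathcal{G}$, then
\begin{equation*}
  \compl{A}\cap S_Q = S_Q\setminus (A\cap S_Q)\in \f.
\end{equation*}
If $A_n\in\mathcal{G}$ for all $n\geq 1$, then
\begin{equation*}
  \Bigl(\bigcup_n A_n\Bigr)\cap S_Q = \bigcup_n (A_n\cap S_Q)\in\f.
\end{equation*}
These are routine.

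Next we check the generators. Every $F\in\f$ lies in $\mathcal{G}$ trivially, because $F\cap S_Q\in\f$. For a generator $A=\bigcup_{R\in\q}E_R$ with $E_R\in\f$ and $E_R\subseteq S_R$, we fix $Q\in\q$. Since the localization has strictly disjoint supports, $S_R\cap S_Q=\varnothing$ for $R\neq Q$. Hence $E_R\cap S_Q=\varnothing$ for $R\ne Q$, while $E_Q\cap S_Q=E_Q$. Therefore
\begin{equation*}
  A\cap S_Q = \bigcup_{R\in\q}(E_R\cap S_Q) = E_Q\in \f,
\end{equation*}
even though the union defining $A$ may be uncountable and $A$ itself need not lie in $\f$.

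There is no genuine obstacle. The only point requiring care is the role of strict disjointness of the supports. With merely $\pp$-q.s. disjoint supports we would get $A\cap S_Q = E_Q\cup Z$, where $Z = \bigcup_{R\neq Q}(E_R\cap S_Q)$ is an uncountable union of sets that are individually polar. Such a $Z$ need not be measurable, so the identity $A\cap S_Q = E_Q$ would only hold up to such a set. This is precisely why the standing assumption of this section, strictly disjointly supported $\q$, is used here.
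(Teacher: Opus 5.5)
Your proof is correct and is essentially the paper's good-sets argument. The only differences are cosmetic: you define $\mathcal{G}$ among all subsets of $\Omega$ rather than inside $\hef$, and you write out the computation $A\cap S_{Q} = E_{Q}$ that the paper leaves implicit when it says that strict disjointness of the supports is used.
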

\begin{proof}
  We are going to use the \textit{principle of good sets}. 
  Let 
  \begin{equation*}
    \mathcal{G} = \{ A \in \hef\mid A \cap S_{Q}\in \f \text{ for all } Q\in \q\}.
  \end{equation*}
  Note that $\mathcal{G}$ contains the generator 
  \begin{equation*}
    \f\cup \biggl\{\bigcup_{Q\in \q} E_{Q}\mid E_{Q}\in \f, E_{Q}\subseteq S_{Q}\biggr\}
  \end{equation*}
  of $\hef$ (here we use that the supports are strictly disjoint).

  Next, we are going to show that $\mathcal{G}$ is a $\sigma$-algebra.
  Clearly, $\varnothing, \Omega\in \mathcal{G}$.
  For $A\in \mathcal{G}$, we have
  \begin{equation*}
    A^{c}\cap S_{Q} = S_{Q}\setminus (S_{Q}\cap A) \in \f
  \end{equation*}
  for every $Q\in \q$, and hence $A^{c}\in \mathcal{G}$.
  Finally, if $A_{i} \in \mathcal{G}$ is a countable family of sets, we have
  \begin{equation*}
  \biggl(\bigcup_{i=1}^{\infty}A_{i}\biggr)\cap S_{Q} = \bigcup_{i=1}^{\infty}(A_{i}\cap S_{Q})\in \f
  \end{equation*}
  for every $Q\in \q$, and hence $\bigcup_{i=1}^{\infty}A_{i}\in \mathcal{G}$, i.e., $\mathcal{G}$ is a $\sigma$-algebra.
  Now, we simply observe that
  \begin{equation*}
    \mathcal{G} \subseteq \hef\subseteq \sigma(\mathcal{G}) = \mathcal{G}.
  \end{equation*}
\end{proof}

\begin{theorem}\label{thm:hahn-extension-is-well-defined}
Let $\mathcal{P}$ be a pre-Hahn-localizable family of probability measures on $(\Omega,\f)$ with strictly disjointly supported localization $\q$, and $\hef$ its Hahn-extension. 
Then every $\mu\in \mathrm{ca}(\mathcal{P})$ can be extended to a finite signed measure $\mu^{\q}$ on $\hef$, and 
\begin{equation*}
  |\mu^{\q}| =|\mu|^{\q}.
\end{equation*}
\end{theorem}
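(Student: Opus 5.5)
The natural definition is forced by Remark \ref{rem:R(Q)-at-most-countable}. For $\mu\in\mathrm{ca}(\pp)$, the set $\q(\mu)$ is at most countable and $\mu(F)=\sum_{Q\in\q(\mu)}\mu(F\cap S_Q)$ for $F\in\f$. By Lemma \ref{lemma:hahn-closure}, $A\cap S_Q\in\f$ for every $A\in\hef$ and $Q\in\q$. I therefore propose to define
\begin{equation*}
  \mu^{\q}(A)=\sum_{Q\in\q(\mu)}\mu(A\cap S_Q),\qquad A\in\hef .
\end{equation*}
The first step is to check that this is well defined and finite. The series converges absolutely because $\sum_{Q\in\q(\mu)}|\mu(A\cap S_Q)|\le\sum_{Q\in\q(\mu)}|\mu|(S_Q)\le|\mu|(\Omega)$; this uses that the $S_Q$ are strictly disjoint and lie in $\f$. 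Absolute convergence also makes the value independent of the enumeration of $\q(\mu)$. For $A\in\f$, \eqref{eq:sum_mu} gives $\mu^{\q}(A)=\mu(A)$, so $\mu^{\q}$ extends $\mu$.

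The second step is $\sigma$-additivity. Let $A=\biguplus_i A_i$ with $A_i\in\hef$. Then $A\cap S_Q=\biguplus_i(A_i\cap S_Q)$ is a disjoint union of sets in $\f$, so $\mu(A\cap S_Q)=\sum_i\mu(A_i\cap S_Q)$. The double series $\sum_Q\sum_i$ is dominated by $\sum_Q\sum_i|\mu|(A_i\cap S_Q)\le|\mu|(\Omega)$. Fubini for absolutely summable double series then lets me interchange the sums and obtain $\mu^{\q}(A)=\sum_i\mu^{\q}(A_i)$. It is also worth noting that $\mu^{\q}\bigl(\compl{(\bigcup_{Q\in\q(\mu)}S_Q)}\bigr)=0$; although the union may be uncountable in general, here it is a countable union of sets in $\f$.

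The third step is the identity $|\mu^{\q}|=|\mu|^{\q}$. Since $|\mu|\in\mathrm{ca}(\pp)$ and $\q(|\mu|)=\q(\mu)$, the right-hand side is $A\mapsto\sum_{Q\in\q(\mu)}|\mu|(A\cap S_Q)$. Fix $A\in\hef$.

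For $|\mu^{\q}|(A)\le|\mu|^{\q}(A)$, take any $\hef$-measurable partition $A=\biguplus_j F_j$. Then
\begin{equation*}
  \sum_j|\mu^{\q}(F_j)|\le\sum_j\sum_Q|\mu(F_j\cap S_Q)|=\sum_Q\sum_j|\mu(F_j\cap S_Q)|\le\sum_Q|\mu|(A\cap S_Q),
\end{equation*}
where the last inequality holds because $\{F_j\cap S_Q\}_j$ is an $\f$-partition of $A\cap S_Q$.

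For the reverse inequality, I plan to argue as in Lemma \ref{lemma:TV-norm-of-singular-sum}. For each $Q_k\in\q(\mu)$, choose an $\f$-partition of $A\cap S_{Q_k}$ that nearly attains $|\mu|(A\cap S_{Q_k})$, up to an error $\varepsilon2^{-k}$. On each such set $\mu^{\q}=\mu$. Adjoining the remainder $A\setminus\bigcup_k S_{Q_k}$, which is $\mu^{\q}$-null, yields an $\hef$-partition of $A$ whose variation sum is at least $|\mu|^{\q}(A)-\varepsilon$.

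Alternatively, one can use that $|\mu^{\q}|$ is the unique measure with Hahn decomposition sets given by the union of the Hahn sets on each $S_Q$. That union is a set of the generating form $\bigcup_Q E_Q$ with $E_Q\subseteq S_Q$, so it lies in $\hef$. This is precisely where the Hahn-extension is needed.

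I expect the main obstacle to be the careful bookkeeping of uncountably many $Q$ against countably many relevant ones. In particular, one has to ensure that sets $A\cap S_Q$ for $Q\notin\q(\mu)$ contribute nothing, which holds because $|\mu|(S_Q)=0$ for such $Q$. The remaining work is routine summation.
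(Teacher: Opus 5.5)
Your proposal is correct and follows the paper's proof essentially step by step. It uses the same definition $\mu^{\q}(A)=\sum_{Q\in\q(\mu)}\mu(A\cap S_Q)$ via Lemma~\ref{lemma:hahn-closure}, the same summability bound, Fubini for $\sigma$-additivity, and partition estimates for $|\mu^{\q}|=|\mu|^{\q}$. The only difference is that the paper first writes $\mu^{\q}=\sum_{Q}\mu_Q^{\q}$ with $\mu_Q=\mu(\cdot\cap S_Q)$ and uses Lemma~\ref{lemma:TV-norm-of-singular-sum} to reduce to a single support, whereas you prove both inequalities directly, which is equally valid. In your optional Hahn-decomposition aside, the claim that the Hahn-extension is needed there is inaccurate: the Hahn positive set of $\mu$ in $\f$ is already a positive set for $\mu^{\q}$.
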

\begin{proof}
  Let $\mu\in \mathrm{ca}(\pp)$, then $\q(\mu) = \{Q\in \q\mid |\mu|(S_{Q})>0\}$ is at most countable (see Remark \ref{rem:R(Q)-at-most-countable}). 
  For $A\in \hef$, we define 
  \begin{equation*}
    \mu^{\q}(A) = \sum_{Q\in \q(\mu)} \mu(A\cap S_{Q}).
  \end{equation*}
  First, we note that since $A\cap S_{Q}\in \f$ by Lemma \ref{lemma:hahn-closure}, the expression $\mu(A\cap S_{Q})$ is well-defined. 
  To make sure that $\mu^{\q}$ is well-defined, we have to argue why the family $\{\mu(A\cap S_{Q})\mid Q\in \q(\mu)\}$ is summable. 
  To this end, define $A_{Q} = A\cap S_{Q}$, and note that the $A_{Q}$'s are pairwise disjoint, since the $S_{Q}$'s are pairwise disjoint by assumption. 
  This implies 
  \begin{equation*}
    \sum_{Q\in \q(\mu)}|\mu|(A_{Q}) = |\mu|\biggl( \bigcup_{Q\in \q(\mu)} A_{Q} \biggr) \leq |\mu|(\Omega) < \infty,
  \end{equation*}
  and since $|\mu(F)| \leq |\mu|(F)$ for all $F\in \f$, this implies that the family $\{\mu(A_{Q})\mid Q\in \q(\mu)\}$ is (absolutely) summable. 
  Hence, $\mu^{\q}$ is well-defined. 

  Next, we have to show that $\mu^{\q}$ is a signed measure. 
Clearly, $\mu^{\q}(F) = \mu(F)$ for all $F\in \f$. 
This implies $\mu^{\q}(\varnothing) = 0$. 
Next, let $A_{k}\in \hef$ be pairwise disjoint for $k\in \mathbb{N}$. 
Since $\{\mu(A_{k}\cap S_{Q})\mid Q\in \q(\mu), k\geq 1\}$ is absolutely summable, Fubini's Theorem implies
\begin{equation*}
  \begin{split}
    \mu^{Q}\biggl( \bigcup_{k\geq 1} A_{k} \biggr) &= \sum_{Q\in \q(\mu)} \mu \biggl( \bigcup_{k\geq 1} A_{k}\cap S_{Q} \biggr) = \sum_{Q\in \q(\mu)} \sum_{k=1}^{\infty} \mu(A_{k}\cap S_{Q}) = \sum_{k=1}^{\infty} \sum_{Q\in \q(\mu)} \mu(A_{k}\cap S_{Q})\\
                                                   &= \sum_{k=1}^{\infty} \mu^{\q}(A_{k}),
  \end{split}
\end{equation*}
and $\mu^{\q}$ is a finite signed measure on $\hef$. 

Finally, we need to show that $|\mu^{\q}| = |\mu|^{\q}$.
First, we note that 
\begin{equation*}
  \mu^{\q} = \sum_{Q\in \q(\mu)} \mu_{Q}^{\q},
\end{equation*}
where $\mu_{Q}(F) = \mu(F\cap S_{Q})$. 
By Lemma \ref{lemma:TV-norm-of-singular-sum} this implies $|\mu^{\q}| = \sum_{Q\in \q(\mu)} |\mu^{\q}_{Q}|$.
On the other hand, we have $|\mu| = \sum_{Q\in \q(\mu)} |\mu_{Q}|$ by Lemma \ref{lemma:TV-norm-of-singular-sum}, which implies $|\mu|^{\q} = \sum_{Q\in \q(\mu)} |\mu_{Q}|^{\q}$. 
Hence, it is enough to show that $|\mu_{Q}^{\q}| = |\mu_{Q}|^{\q}$. 
For $A\in \hef$ we have
\begin{equation*}
  \begin{split}
    |\mu_{Q}^{\q}|(A) & = \sup \biggl\{ \sum_{l=1}^{\infty}| \mu_{Q}^{\q}(A_{l})| :  A_{l}\in \hef \text{ with } A = \biguplus_{l\geq 1}A_{l} \biggr\}\\
                      & = \sup \biggl\{ \sum_{l=1}^{\infty}| \mu_{Q}(A_{l}\cap S_{Q})|  :  A_{l}\in \hef \text{ with } A = \biguplus_{l\geq 1}A_{l} \biggr\}\\
                      &\leq \sup \biggl\{ \sum_{l=1}^{\infty} |\mu_{Q}(B_{l})| :   B_{l}\in \f \text{ with }A\cap S_{Q} = \biguplus_{l\geq 1} B_{l} \biggr\} \\
                      & = |\mu_{Q}|(A\cap S_{Q})\\
                      & = \sum_{R\in \q(\mu)} |\mu_{Q}|(A\cap S_{R}) = |\mu_{Q}|^{\q}(A),
  \end{split}
\end{equation*}
where we have used $|\mu_{Q}|(S_{R}) = 0$ for $R\neq Q$, in the last line. 
On the other hand, we have
\begin{equation*}
  \begin{split}
    |\mu_{Q}|^{\q}(A) &= |\mu_{Q}|(A\cap S_{Q}) \\
                      & = \sup \biggl\{ \sum_{l=1}^{\infty} |\mu_{Q}(B_{l})| :   B_{l}\in \f \text{ with }A\cap S_{Q} = \biguplus_{l\geq 1} B_{l} \biggr\}\\
                      & \leq  \sup \biggl\{ \sum_{l=1}^{\infty} |\mu_{Q}^{\q}(B_{l})| :   B_{l}\in \hef \text{ with }A\cap S_{Q} = \biguplus_{l\geq 1} B_{l} \biggr\}\\ 
                      & = |\mu_{Q}^{\q}|(A\cap S_{Q}) \leq |\mu_{Q}^{\q}|(A),
  \end{split}
\end{equation*}
and hence $|\mu_{Q}^{\q}| = |\mu_{Q}|^{\q}$, finishing the proof. 

\end{proof}

\begin{lemma}\label{lemma:hahn-extension:absolute-continuity}
Let $\mathcal{P}$ be a pre-Hahn-localizable family of probability measures on $(\Omega,\f)$ with strictly disjointly supported localization $\q$, and $\hef$ its Hahn-extension. 
For $\mu\in \mathrm{ca}(\pp)$ and $P\in \pp$, we have 
\begin{equation*}
  \mu \ll P \iff \mu^{\q}\ll P^{\q}.
\end{equation*}
\end{lemma}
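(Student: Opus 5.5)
The direction ``$\Leftarrow$'' is immediate from restriction. Theorem \ref{thm:hahn-extension-is-well-defined} gives $\mu^{\q}|_{\f}=\mu$, $P^{\q}|_{\f}=P$ and $|\mu^{\q}|=|\mu|^{\q}$. So if $F\in\f$ satisfies $P(F)=0$, then $P^{\q}(F)=0$, hence $|\mu^{\q}|(F)=0$. Since $|\mu^{\q}|(F)=|\mu|^{\q}(F)=|\mu|(F)$ for $F\in\f$, we get $|\mu|(F)=0$, i.e.\ $\mu\ll P$.

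For ``$\Rightarrow$'', assume $|\mu|\ll P$ and let $A\in\hef$ with $P^{\q}(A)=0$. We must show $|\mu^{\q}|(A)=0$. By Theorem \ref{thm:hahn-extension-is-well-defined} it suffices to show $|\mu|^{\q}(A)=0$, and by definition of the extension
\begin{equation*}
|\mu|^{\q}(A)=\sum_{Q\in\q(\mu)}|\mu|(A\cap S_{Q}),
\end{equation*}
where each $A\cap S_{Q}\in\f$ by Lemma \ref{lemma:hahn-closure}. So it is enough to prove $|\mu|(A\cap S_Q)=0$ for each $Q\in\q(\mu)$. Since $|\mu|\ll P$ on $\f$, this reduces to showing $P(A\cap S_Q)=0$ for such $Q$.

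This last step is the only point needing care. We have $P^{\q}(A)=\sum_{R\in\q(P)}P(A\cap S_R)=0$, so $P(A\cap S_R)=0$ for every $R\in\q(P)$, all terms being non-negative. If $Q\notin\q(P)$, then $P(S_Q)=0$ by definition of $\q(P)$, and so trivially $P(A\cap S_Q)=0$. Hence $P(A\cap S_Q)=0$ for every $Q\in\q$, regardless of whether $Q$ lies in $\q(P)$. (In fact $\q(\mu)\subseteq\q(P)$, since $|\mu|(S_Q)>0$ forces $P(S_Q)>0$, but the argument does not need this.) Absolute continuity on $\f$ then gives $|\mu|(A\cap S_Q)=0$ for all $Q$, so $|\mu^{\q}|(A)=0$, i.e.\ $\mu^{\q}\ll P^{\q}$.

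The main obstacle is only bookkeeping: the extensions are defined via the index sets $\q(\mu)$ and $\q(P)$, which may differ, and the null-set condition for $P^{\q}$ must transfer to every support $S_Q$ relevant for $\mu$. The case distinction above handles this. Beyond that, the proof only uses that the defining sums have non-negative terms and that $A\cap S_Q\in\f$ (Lemma \ref{lemma:hahn-closure}).
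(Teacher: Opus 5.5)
Your proof is correct and follows the paper's argument essentially verbatim. For ``$\Leftarrow$'' it restricts to $\f$ and uses $|\mu^{\q}|=|\mu|^{\q}$. For ``$\Rightarrow$'' it passes from $P^{\q}(A)=0$ to $P(A\cap S_Q)=0$ for all $Q$, then to $|\mu|(A\cap S_Q)=0$, and sums over $\q(\mu)$. Your explicit handling of $Q\notin\q(P)$, where $P(S_Q)=0$, spells out a step the paper states without comment.
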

\begin{proof}
  First, we show that $\mu^{\q}\ll P^{\q}$ implies $\mu \ll P$. 
  Let $F\in \f$ and $P(F) = 0$. 
  Since $P(F) = P^{\q}(F)$, this implies that $|\mu|(F) = |\mu^{\q}|(F) = 0$, and hence $\mu\ll P$. 

  Next, we show that $\mu \ll P$ implies $\mu^{\q}\ll P^{\q}$. 
  Assume that $P^{\q}(A) = 0$ for some $A\in \hef$. 
  Since
  \begin{equation*}
    P^\q(A) = \sum_{Q\in \q(P)} P(A\cap S_{Q}) = 0,
  \end{equation*}
  this implies that $P(A\cap S_{Q}) =0$ for all $Q\in \q$.
  Since $\mu\ll P$, we have $|\mu|(A\cap S_{Q}) = 0$ for all $Q\in \q$, and hence by Theorem \ref{thm:hahn-extension-is-well-defined}

   \begin{equation*}
    |\mu^{\q}|(A) = |\mu|^{\q}(A) =  \sum_{Q\in \q(\mu)} |\mu|(A\cap S_{Q}) = 0.
  \end{equation*}

\end{proof}

\begin{lemma}\label{lemma:hahn-extension-is-hahn-localizable}
Let $\mathcal{P}$ be a pre-Hahn-localizable family of probability measures on $(\Omega,\f)$ with strictly disjointly supported localization $\q$, and $\hef$ its Hahn-extension. 
Then $\pq$ is Hahn-localizable, and $\qq$ is a Hahn-localization of $\pq$ with support sets $S_{Q^{\q}} = S_{Q}$. 
\end{lemma}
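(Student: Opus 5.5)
We check the two parts of Definition \ref{def:hahn-loc} for $\pq$ on $\hef$ with localization $\qq=\{Q^{\q}\mid Q\in\q\}$ and supports $S_{Q^{\q}}=S_Q$. First, pre-Hahn-localizability. Since $S_R\in\f$ and $Q^{\q}$ extends $Q$, we have $Q^{\q}(S_R)=Q(S_R)=\delta_{QR}$. For $\qq\lll\pq$, take $Q\in\q$ and $P\in\pp$ with $Q\ll P$. Viewing $Q$ and $P$ as elements of $\mathrm{ca}(\pp)$, Lemma \ref{lemma:hahn-extension:absolute-continuity} gives $Q^{\q}\ll P^{\q}$. For $\pq\lll\sconv(\qq)$, fix $P\in\pp$ and choose $R=\sum_k\lambda_kQ_k\in\sconv(\q)$ with $P\ll R$. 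The formula defining the extension,
\[
\mu^{\q}(A)=\sum_{Q\in\q(\mu)}\mu(A\cap S_Q),
\]
is linear in $\mu$ and respects countable convex combinations, by absolute summability and Fubini. Hence $R^{\q}=\sum_k\lambda_kQ_k^{\q}\in\sconv(\qq)$. One caveat: $R$ lies in $\mathrm{ca}(\pp)$ only if it is dominated by some element of $\pp$. To avoid relying on this, I would argue directly. If $R^{\q}(A)=0$, then $Q_k(A\cap S_{Q_k})=0$ for all $k$. By Lemma \ref{lemma:hahn-closure}, $A\cap S_{Q_k}\in\f$, so $R(A\cap S_{Q_k})=0$ and therefore $P(A\cap S_{Q_k})=0$. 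Moreover $\q(P)\subseteq\{Q_k\}$, so $P^{\q}(A)=\sum_{Q\in\q(P)}P(A\cap S_Q)=0$.

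Next comes the localization property, which is the main step. Let $E_Q\in\hef$ with $E_Q\subseteq S_Q$. By Lemma \ref{lemma:hahn-closure}, $E_Q=E_Q\cap S_Q\in\f$. The natural candidate is $S=\bigcup_{Q\in\q}E_Q$, which is a generator of $\hef$ by \eqref{def:sigma-algebra-hahn-extension} and so lies in $\hef$. Property a) holds trivially, since $E_Q\setminus S=\varnothing$. For property b), let $F\in\hef$ satisfy $Q^{\q}(E_Q\setminus F)=0$ for all $Q$. Fix $Q$. Because the supports are strictly disjoint, $S\cap S_Q=E_Q$. Also $Q^{\q}(\compl{S_Q})=0$, since $Q(S_Q)=1$. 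Therefore
\[
Q^{\q}(S\setminus F)=Q^{\q}\bigl((S\cap S_Q)\setminus F\bigr)=Q^{\q}(E_Q\setminus F)=0.
\]

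The hardest part is the bookkeeping in the $\pq\lll\sconv(\qq)$ step. One has to make sure that the extension of a $\sigma$-convex combination equals the $\sigma$-convex combination of the extensions, or else bypass this with the direct nullset argument above. One also has to confirm that $\q(P)$ is contained in $\{Q_k\}$, which follows from Remark \ref{rem:R(Q)-at-most-countable}. The localization property itself is immediate, because strict disjointness makes the union of the $E_Q$ measurable in $\hef$.
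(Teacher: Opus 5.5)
Your proposal is correct and follows the paper's proof step for step. It checks $Q^{\q}(S_R)=\delta_{QR}$ by restriction, gets $\qq\lll\pq$ from Lemma \ref{lemma:hahn-extension:absolute-continuity}, uses $R^{\q}=\sum_k\lambda_kQ_k^{\q}$ for the domination by $\sconv(\qq)$, and takes the plain union $\bigcup_{Q}E_Q\in\hef$ as the essential supremum. Your direct null-set check that $P^{\q}\ll R^{\q}$ (after discarding indices with $\lambda_k=0$) makes explicit a step the paper leaves tacit, which matters because $R\in\sconv(\q)$ need not lie in $\mathrm{ca}(\pp)$ when $\pp$ is not $\sigma$-convex.
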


\begin{proof}
  We first note that $R^{\q}(S_{Q}) = \delta_{RQ}$, since
  \begin{equation*}
    R^{\q}(S_{Q}) = R(S_{Q}) = \delta_{RQ},
  \end{equation*}
 and for every $A\in \hef$, as the supports are disjoint by assumption, we have 
  \begin{equation*}
    Q^{\q}(A) = \sum_{R\in \q(Q)} Q(A\cap S_{R}) = Q(A\cap S_{Q}). 
  \end{equation*}
  Furthermore, we note that Lemma \ref{lemma:hahn-extension:absolute-continuity} implies $\qq \lll \pq$. 
  Next, we show that $\pq \lll \sconv(\qq)$. 
  Let $P^{\q}\in \pq$, and $R = \sum_{k=1}^{\infty} \lambda_{k}Q_{k}\in \sconv(\q)$ with $P\ll R$. 
  We show that $R^{\q} = \sum_{k=1}^{\infty}\lambda_{k} Q_{k}^{\q}$. 
  This is an immediate consequence of Tonelli's Theorem, since $\q(R) \subseteq\{ Q_{k}\mid k\geq 1\}$. 
  For $A\in \hef$ we have
  \begin{equation*}
    R^{\q}(A) = \sum_{Q\in \q(R)} R(A\cap S_{Q}) = \sum_{Q\in \q(R)} \sum_{k=1}^{\infty} \lambda_{k} Q_{k}(A\cap S_{Q}\cap S_{Q_{k}}) = \sum_{k=1}^{\infty} \lambda_{k} Q_{k}^{\q}(A). 
  \end{equation*}

Finally, we establish the localization property (Property 2 of Definition \ref{def:hahn-loc}). 
Let $\{E_{R}\}_{R\in \qq}$ be a family of $\hef$ measurable sets with $E_{R}\subseteq S_{R}$.
Since $E_{R} = E_{R}\cap S_{R}\in \f$ (by Lemma \ref{lemma:hahn-closure}), the definition of $\hef$ implies that $E = \bigcup_{R\in \qq} E_{R}\in \hef$.
We are going to show that $E$ satisfies
\begin{enumerate}
  \item $R(E_{R}\setminus E) = 0$ for all $R\in \qq$.
  \item If for any set $F\in \hef$ we have $R(E_{R}\setminus F) = 0$ for all $R\in \qq$, then $R(E\setminus F) = 0$ for all $R\in \qq$.
\end{enumerate}
The first point is obvious, since $E_{R}\setminus E = \varnothing$ for all $R\in \qq$.
Next, let $F\in \hef$ such that $R(E_{R}\setminus F) = 0$ for all $R\in \qq$.
We have
\begin{equation*}
  R(E\setminus F) = R((E\setminus F)\cap S_{R}) = R(E_{R}\setminus F) = 0,
\end{equation*}
establishing the second property $E$ has to satisfy.
Hence, $E$ is the essential supremum of $\{E_{R}\}_{R\in \qq}$.

\end{proof}

\begin{theorem}\label{thm:duality-wo-hahn-loc}
 Let $\mathcal{P}$ be a convex and pre-Hahn-localizable family of probability measures on $(\Omega,\f)$ with strictly disjointly supported localization $\q$, and $\hef$ its Hahn-extension.  
Then $\mathrm{ca}(\f,\pp)$ is isometrically isomorphic to $\mathrm{ca}(\hef, \pq)$, and we have
  \begin{equation*}
    \mathrm{ca}(\f,\pp)' \simeq \mathbb{L}^{\infty}(\hef,\pq).
  \end{equation*}
\end{theorem}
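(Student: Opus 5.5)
The plan is to show that the extension map $\Phi\colon \mathrm{ca}(\f,\pp)\to \mathrm{ca}(\hef,\pq)$, $\mu\mapsto \mu^{\q}$, from Theorem \ref{thm:hahn-extension-is-well-defined} is a linear isometric bijection. Once that is done, we apply Theorem \ref{thm:duality} to the Hahn-localizable model $(\Omega,\hef,\pq)$ given by Lemma \ref{lemma:hahn-extension-is-hahn-localizable}. This yields $\mathrm{ca}(\f,\pp)'\simeq \mathrm{ca}(\hef,\pq)'\simeq \LL(\hef,\pq)$.

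\textbf{Well-definedness.} By Lemma \ref{lemma:hahn-extension:absolute-continuity}, $\mu\ll P$ implies $\mu^{\q}\ll P^{\q}$, so $\Phi$ indeed lands in $\mathrm{ca}(\hef,\pq)$.

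\textbf{Linearity.} This is immediate from the defining formula $\mu^{\q}(A)=\sum_{Q\in\q(\mu)}\mu(A\cap S_Q)$. The index set $\q(\mu)$ may differ between $\mu$, $\nu$ and $\mu+\nu$. However, terms with $|\mu|(S_Q)=0$ vanish, so one may sum over $\q(\mu)\cup\q(\nu)$, which is still countable.

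\textbf{Isometry.} By Theorem \ref{thm:hahn-extension-is-well-defined},
\[
\|\mu^{\q}\|_{TV}=|\mu^{\q}|(\Omega)=|\mu|^{\q}(\Omega)=|\mu|(\Omega)=\|\mu\|_{TV},
\]
since $|\mu|^{\q}$ extends $|\mu|$ and $\Omega\in\f$. In particular $\Phi$ is injective. Injectivity also follows directly, because restriction to $\f$ is a left inverse of $\Phi$.

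\textbf{Surjectivity.} This is the only step needing care. Take $\nu\in\mathrm{ca}(\hef,\pq)$, so $\nu\ll P^{\q}$ for some $P\in\pp$, and let $\mu=\nu|_{\f}$. First, $\mu\ll P$, since $P^{\q}$ agrees with $P$ on $\f$ and $|\mu|\le|\nu|$ on $\f$; hence $\mu\in\mathrm{ca}(\f,\pp)$. It remains to show $\nu=\mu^{\q}$ on $\hef$. Fix $A\in\hef$. By Remark \ref{rem:R(Q)-at-most-countable} applied to $P$, the set $S=\bigcup_{Q\in\q(P)}S_Q$ is a countable union in $\f$ with $P(\compl{S})=0$. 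Then $P^{\q}(\compl{S})=0$, hence $|\nu|(\compl{S})=0$. Using countable additivity of $\nu$ and strict disjointness of the supports,
\[
\nu(A)=\sum_{Q\in\q(P)}\nu(A\cap S_Q).
\]
By Lemma \ref{lemma:hahn-closure}, each $A\cap S_Q$ lies in $\f$, so $\nu(A\cap S_Q)=\mu(A\cap S_Q)$. Moreover $\q(\mu)\subseteq\q(P)$, and the extra terms satisfy $\mu(A\cap S_Q)=0$. Therefore $\nu(A)=\mu^{\q}(A)$.

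Thus $\Phi$ is an isometric isomorphism, and dual spaces of isometrically isomorphic normed spaces are isometrically isomorphic.

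\textbf{Applying Theorem \ref{thm:duality}.} We need $\pq$ to be convex on $\hef$. Extension is affine: $(\lambda P_1+(1-\lambda)P_2)^{\q}=\lambda P_1^{\q}+(1-\lambda)P_2^{\q}$, which follows from the injectivity and linearity of $\Phi$ established above. Hence convexity of $\pp$ transfers to $\pq$. Theorem \ref{thm:duality} then gives $\mathrm{ca}(\hef,\pq)'\simeq\LL(\hef,\pq)$, completing the proof.
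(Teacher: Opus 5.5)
Your proof is correct and follows the same route as the paper. You identify $\mathrm{ca}(\f,\pp)$ isometrically with $\mathrm{ca}(\hef,\pq)$ via extension/restriction, pass to dual spaces, and invoke Lemma \ref{lemma:hahn-extension-is-hahn-localizable} together with Theorem \ref{thm:duality}.

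The differences are only in the details, and they favour your version. You get the isometry from the identity $|\mu^{\q}|=|\mu|^{\q}$ of Theorem \ref{thm:hahn-extension-is-well-defined}, where the paper compares partitions directly. You also explicitly verify two points the paper uses tacitly: that every $\nu\in\mathrm{ca}(\hef,\pq)$ equals $(\nu|_{\f})^{\q}$, which is needed for the restriction map to be injective, and that $\pq$ is convex, which is needed to apply Theorem \ref{thm:duality}.
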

\begin{proof}
We denote with $J: \mathrm{ca}(\hef, \pq) \to \mathrm{ca}(\f,\pp)$ the restriction onto $\f$, i.e., $J(\mu^{\q})(A) = \mu^{\q}(A) = \mu(A)$ for $A\in \f$.
The map $J$ is linear and surjective.
We are going to show that $J$ is an isometry, which automatically implies that $J$ is injective.
We have
\begin{equation*}
  \begin{split}
    \|\mu^{\q}\|_{TV} & = \sup \biggl\{ \sum_{i=1}^{\infty} \biggl|\sum_{Q\in \q(\mu)}\mu(E_{i}\cap S_{Q})\biggr|\, \bigg\vert \, \Omega = \biguplus_{i=1}^{\infty}E_{i}, E_{i}\in \hef \biggl\}\\
                               & \geq \sup \biggl\{ \sum_{i=1}^{\infty} \biggl|\sum_{Q\in \q(\mu)}\mu(E_{i}\cap S_{Q})\biggr|\, \bigg\vert \, \Omega = \biguplus_{i=1}^{\infty}E_{i}, E_{i}\in \f \biggl\} = \|\mu\|_{TV} = \|J(\mu^{\q})\|_{TV}.
  \end{split}
\end{equation*}

On the other hand, we have 
\begin{equation}
  \begin{split}
   \|\mu^{\q}\|_{TV} & = \sup \biggl\{ \sum_{i=1}^{\infty} \biggl|\sum_{Q\in \q(\mu)}\mu(E_{i}\cap S_{Q})\biggr|\, \bigg\vert \, \Omega = \biguplus_{i=1}^{\infty}E_{i}, E_{i}\in \hef \biggl\}\\
                     &\leq \sup \biggl\{ \sum_{i=1}^{\infty} \sum_{Q\in \q(\mu)}|\mu(E_{i}\cap S_{Q})|\, \bigg\vert \, \Omega = \biguplus_{i=1}^{\infty}E_{i}, E_{i}\in \hef \biggl\}\\
                     &\leq \sup \biggl\{ \sum_{i=1}^{\infty} |\mu(E_{i})|\, \bigg\vert \,\Omega = \biguplus_{i=1}^{\infty}E_{i}, E_{i}\in \f \biggr\} \\
                     &= \|\mu\|_{TV} = \|J(\mu^{\q})\|_{TV},
  \end{split}
\end{equation}
since $\{E_{i}\cap S_{Q}\mid i\geq 1, Q\in \q(\mu)\}$ is a family of pairwise disjoint sets from $\f$.

Since $\mathrm{ca}(\f,\pp)$ and $\mathrm{ca}(\hef,\pq)$ are isometrically isomorphic, so are their dual spaces.
By Lemma \ref{lemma:hahn-extension-is-hahn-localizable}, the family $\pq$ on $(\Omega,\hef)$ is Hahn-localizable. 
Theorem \ref{thm:duality} then implies
\begin{equation*}
  \mathrm{ca}(\f,\pp)' \simeq \mathrm{ca}(\hef,\pq)' \simeq \mathbb{L}^{\infty}(\hef, \pq).
\end{equation*}
\end{proof}

\subsection{Minimality and Uniqueness of the Hahn-extension}\label{sec:hahn-ext:min}
In this section, we are going to show that the Hahn-extension is in some sense the smallest extension of the original $\sigma$-algebra $\mathcal{F}$, for which $\LL(\pp)$ is a dual space of a normed space. 
This is summarized in the following proposition.
\begin{mprop}\label{prop:minimality-of-hahn-extension}
Let $(\Omega, \mathcal{G}, \pp)$ be a robust statistical model with convex $\pp$, and let $\mathcal{F} \subseteq \mathcal{G}$ be a sub-$\sigma$-algebra, such that $(\Omega, \mathcal{F}, \pp)$ is pre-Hahn-localizable, with localization $\mathcal{Q}$.
If $\LL(\mathcal{G}, \pp)$ is the dual space of a normed space, then
\begin{equation*}
  \mathcal{H}_{\mathcal{F}}^{\mathcal{Q}} \subseteq \mathcal{G}^{\pp}. 
\end{equation*}
\end{mprop}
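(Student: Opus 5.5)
The plan is to show that every generator of $\mathcal{H}_{\mathcal{F}}^{\mathcal{Q}}$ lies in $\mathcal{G}^{\pp}$. Since $\mathcal{G}^{\pp}$ is a $\sigma$-algebra containing $\mathcal{F}\subseteq\mathcal{G}$, this gives the inclusion. So fix sets $E_Q\in\mathcal{F}$ with $E_Q\subseteq S_Q$, put $E=\bigcup_{Q\in\q}E_Q$, and aim to prove $E\in\mathcal{G}^{\pp}$. The idea is to build a $\mathcal{G}$-measurable "version" $S$ of $E$ as an order supremum in $\LL(\mathcal{G},\pp)$, and then check that $S$ and $E$ differ only by a $P$-polar set for each $P\in\pp$.

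\emph{Step 1 (Dedekind completeness).} I first derive from the hypothesis that $\LL(\mathcal{G},\pp)$ is the dual of a normed space that it is Dedekind complete. For this I would invoke the duality characterization of \cite{liebrich-model-uncertainty:reverse-approach} (Theorem 5.3 there, already used in the proof of Proposition \ref{thm:char:dedekind-complete}). Note that $(\Omega,\mathcal{G},\pp)$ has the same polar structure along the sets $S_Q\in\mathcal{F}\subseteq\mathcal{G}$. I expect this to be the main obstacle. One must check that the hypotheses of that theorem are met on $\mathcal{G}$, or else argue directly that a dual Banach lattice structure of this type forces order-completeness. If that fails, I would instead try to prove bounded order-completeness of the countable-type family below via weak$^*$ compactness of the unit ball.

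\emph{Step 2 (the supremum agrees with each $E_Q$).} Let $g$ be the least upper bound in $\LL(\mathcal{G},\pp)$ of $\{\ind_{E_Q}\}_{Q\in\q}$; we may take $0\le g\le1$. I then copy the argument $2\Rightarrow3$ of Proposition \ref{thm:char:dedekind-complete}. If $g>\ind_{E_Q}$ on a non-polar subset of $S_Q$, set $h=\ind_{E_Q}\ind_{S_Q}+g\ind_{\compl{S_Q}}$. For $R\neq Q$ we have $\ind_{E_R}=0$ q.s. on $S_Q$, because $E_R\subseteq S_R$ and $S_Q\cap S_R$ is $\pp$-polar. Hence $h$ is a strictly smaller upper bound, a contradiction. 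So $g=\ind_{E_Q}$ $\pp$-q.s. on $S_Q$ for every $Q\in\q$. Put $S=\{g=1\}\in\mathcal{G}$. Then $(S\triangle E)\cap S_Q$ is $\pp$-polar for every $Q$; here I use that the $E_R$ for $R\neq Q$ meet $S_Q$ only in polar sets.

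\emph{Step 3 (completion).} Fix $P\in\pp$. By Remark \ref{rem:R(Q)-at-most-countable}, \eqref{eq:prob1supports}, the countable union $U=\bigcup_{Q\in\q(P)}S_Q\in\mathcal{F}$ has $P(U)=1$. Then
\begin{equation*}
E\triangle S\subseteq \compl{U}\cup\bigcup_{Q\in\q(P)}\bigl((E\triangle S)\cap S_Q\bigr),
\end{equation*}
which is contained in a $P$-null set of $\mathcal{G}$. Hence $E=(S\setminus N)\cup Z$ with $N,Z$ $P$-polar, i.e.\ $E\in\mathcal{G}^{P}$. As $P$ was arbitrary, $E\in\mathcal{G}^{\pp}$, and therefore $\mathcal{H}_{\mathcal{F}}^{\mathcal{Q}}\subseteq\mathcal{G}^{\pp}$.
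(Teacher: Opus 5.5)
Your proposal is correct and follows essentially the paper's route: Dedekind completeness of $\LL(\mathcal{G},\pp)$ via Theorem 5.3 of \cite{liebrich-model-uncertainty:reverse-approach} (as in Lemma \ref{lemma:min-ext:esssup}), the order supremum of $\{\ind_{E_Q}\}_{Q\in\mathcal{Q}}$ identified with $\ind_{E_Q}$ on each $S_Q$ by the $2\Rightarrow 3$ argument of Proposition \ref{thm:char:dedekind-complete}, and the countable decomposition over $\mathcal{Q}(P)$; your Step 3 is slightly more direct than the paper's, since it shows $E\in\mathcal{G}^{P}$ straight from the definition of the completion instead of extending $P$ to the $\sigma$-algebra of Lemma \ref{lemma:min-ext:sigma-alg}. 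The one point to tighten is the last sentence of Step 2, which should rest on the standing assumption of Section \ref{sec:hahn_extension} that the supports are strictly disjoint, so that $E\cap S_Q=E_Q$ exactly; with merely $\pp$-q.s.\ disjoint supports, $\bigcup_{R\neq Q}(E_R\cap S_Q)$ is an uncountable union of polar sets and need not be polar.
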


As an immediate corollary to Proposition \ref{prop:minimality-of-hahn-extension}, we get that the $\pp$-completion of any two Hahn-extensions are identical.
\begin{corollary}\label{cor:hahn-ext-unique}
  Let $(\Omega, \mathcal{F},\pp)$ be pre-Hahn-localizable and $\pp$ convex, with two localizations $\mathcal{Q}$ and $\mathcal{Q}'$, and corresponding support sets $\{S_{Q}\mid Q\in \mathcal{Q}\}$ and $\{T_{Q'}\mid Q'\in \mathcal{Q}'\}$, then we have 
  \begin{equation*}
    (\mathcal{H}_{F}^{\mathcal{Q}})^{\pp} = (\mathcal{H}_{\mathcal{F}}^{\mathcal{Q}'})^{\pp}. 
  \end{equation*}
\end{corollary}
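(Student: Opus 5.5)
Apply Proposition \ref{prop:minimality-of-hahn-extension} twice, once with each extension playing the role of $\mathcal{G}$. Throughout I assume, as in this section, that both localizations are strictly disjointly supported, so that both Hahn-extensions are defined.

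First, set $\mathcal{G} = (\mathcal{H}_{\mathcal{F}}^{\mathcal{Q}'})^{\pp}$, with $\pp$ understood as the family of measures on $\mathcal{G}$ obtained by extending each $P\in \pp$ first to $P^{\mathcal{Q}'}$ and then to the completion. We need $\mathcal{F}\subseteq \mathcal{G}$, which is clear, and that $(\Omega,\mathcal{F},\pp)$ is pre-Hahn-localizable with localization $\mathcal{Q}$, which is the hypothesis. The remaining input is that $\LL(\mathcal{G},\pp)$ is the dual of a normed space.

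For this duality, Lemma \ref{lemma:hahn-extension-is-hahn-localizable} shows that $\pp^{\mathcal{Q}'}$ on $\mathcal{H}_{\mathcal{F}}^{\mathcal{Q}'}$ is Hahn-localizable with strictly disjoint supports $T_{Q'}$. The last part of Lemma \ref{lemma:pre-hahn-loc-implies-pre-hahn-loc-of-closure} then gives Hahn-localizability of its $\pp$-completion. Convexity of $\pp^{\mathcal{Q}'}$ follows from convexity of $\pp$, since $\mu\mapsto\mu^{\mathcal{Q}'}$ is linear by its defining formula. Theorem \ref{thm:duality} therefore yields $\LL(\mathcal{G},\pp)\simeq \mathrm{ca}(\mathcal{G},\pp)'$.

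Proposition \ref{prop:minimality-of-hahn-extension} now gives $\mathcal{H}_{\mathcal{F}}^{\mathcal{Q}}\subseteq \mathcal{G}^{\pp} = \mathcal{G}$, because $\mathcal{G}$ is already $\pp$-complete. Completion is monotone and idempotent, as is immediate from the definition $\f^{\pp}=\bigcap_P \f^P$, so taking $\pp$-completions gives
\begin{equation*}
(\mathcal{H}_{\mathcal{F}}^{\mathcal{Q}})^{\pp}\subseteq (\mathcal{H}_{\mathcal{F}}^{\mathcal{Q}'})^{\pp}.
\end{equation*}
Exchanging the roles of $\mathcal{Q}$ and $\mathcal{Q}'$ gives the reverse inclusion.

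The main obstacle is the bookkeeping of measures across $\sigma$-algebras. We must check that the $\pp$-null sets, and hence the completion, are the same whether computed with $\pp$ on the smaller algebra or with the extended measures on the larger one. For a set in $\mathcal{H}_{\mathcal{F}}^{\mathcal{Q}}$, the $\pp$-completion relative to $\mathcal{G}$ should be formed with the extensions $P^{\mathcal{Q}'}$ restricted to $\mathcal{H}_{\mathcal{F}}^{\mathcal{Q}}\cap\mathcal{G}$. I expect consistency here from uniqueness of extensions: by Lemma \ref{lemma:hahn-closure}, any measure on the larger algebra agreeing with $P$ on $\mathcal{F}$ is determined by $\sum_{Q}P(\cdot\cap S_Q)$ via Remark \ref{rem:R(Q)-at-most-countable}, at least on sets whose traces on the supports lie in $\mathcal{F}$. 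If this compatibility fails, I would fall back on the proof of Proposition \ref{prop:minimality-of-hahn-extension} itself, which identifies sets of $\mathcal{H}^{\mathcal{Q}}_{\mathcal{F}}$ as essential suprema in $\mathcal{G}$ up to $\pp$-polars.
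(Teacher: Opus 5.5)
Your proposal is correct and follows the paper's proof. Like the paper, you apply Proposition~\ref{prop:minimality-of-hahn-extension} with the other Hahn-extension as $\mathcal{G}$, use Theorem~\ref{thm:duality} to supply the dual-space hypothesis, and conclude by symmetry. The only difference is that you take $\mathcal{G}=(\mathcal{H}_{\mathcal{F}}^{\mathcal{Q}'})^{\pp}$, which costs an extra appeal to Lemma~\ref{lemma:pre-hahn-loc-implies-pre-hahn-loc-of-closure}, whereas the paper takes $\mathcal{G}=\mathcal{H}_{\mathcal{F}}^{\mathcal{Q}'}$ directly because the proposition's conclusion already lands in $\mathcal{G}^{\pp}$; your explicit checks of convexity and of consistency of the extended measures, which the paper leaves implicit, are sound.
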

\begin{proof}
  By Theorem \ref{thm:duality}, we get that $\LL(\mathcal{H}_{\mathcal{F}}^{\mathcal{Q}'}, \pp)$ is the dual space of a normed space. 
  Hence, applying Proposition \ref{prop:minimality-of-hahn-extension} to $\mathcal{G} = \mathcal{H}_{\mathcal{F}}^{\mathcal{Q}'}$ yields 
  \begin{equation*}
    \mathcal{H}_{\mathcal{F}}^{\mathcal{Q}} \subseteq (\mathcal{H}_{\mathcal{F}}^{\mathcal{Q}'})^{\pp},
  \end{equation*}
  and since $(\mathcal{H}_{\mathcal{F}}^{\mathcal{Q}'})^{\pp}$ is $\pp$-complete, we get
  \begin{equation*}
    (\mathcal{H}_{\mathcal{F}}^{\mathcal{Q}})^{\pp} \subseteq (\mathcal{H}_{\mathcal{F}}^{\mathcal{Q}'})^{\pp}.
  \end{equation*}
  The reverse inclusion follows by symmetry. 
\end{proof}

The proof of Proposition \ref{prop:minimality-of-hahn-extension} requires a few lemmas. 

\begin{lemma}\label{lemma:min-ext:sigma-alg}
  In the same setting as in Proposition \ref{prop:minimality-of-hahn-extension}, the measures in $\pp$ can be extended to the $\sigma$-algebra
  \begin{equation*}
  \mathcal{Z} = \sigma \biggl( \mathcal{G}\cup \biggl\{ \bigcup_{Q\in \mathcal{Q}} E_{Q}\, \big|\, E_{Q}\subseteq S_{Q}, E_{Q}\in \mathcal{F} \biggr\} \biggr). 
  \end{equation*}
\end{lemma}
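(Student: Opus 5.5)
We mimic the construction behind Theorem \ref{thm:hahn-extension-is-well-defined}, replacing $\f$ by $\mathcal{G}$ as the base $\sigma$-algebra. As in the rest of this section, we assume the supports $S_{Q}\in\f\subseteq\mathcal{G}$ are strictly disjoint. The first step is an analogue of Lemma \ref{lemma:hahn-closure}: for every $A\in\mathcal{Z}$ and every $Q\in\q$ we have $A\cap S_{Q}\in\mathcal{G}$. We prove this with the principle of good sets. The class
\begin{equation*}
\mathcal{G}' = \{A\in\mathcal{Z}\mid A\cap S_{Q}\in\mathcal{G}\text{ for all }Q\in\q\}
\end{equation*}
is a $\sigma$-algebra by the same complement and countable-union computation as in Lemma \ref{lemma:hahn-closure}. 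It contains $\mathcal{G}$. It also contains every generator $\bigcup_{R}E_{R}$, because disjointness of the supports gives $\bigl(\bigcup_{R}E_{R}\bigr)\cap S_{Q}=E_{Q}\in\f\subseteq\mathcal{G}$. Hence $\mathcal{G}'=\mathcal{Z}$.

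The second step relies on Remark \ref{rem:R(Q)-at-most-countable}, applied to $(\Omega,\f,\pp)$. For each $P\in\pp$, the set $\q(P)=\{Q\in\q\mid P(S_{Q})>0\}$ is at most countable. Moreover $P\bigl(\bigcup_{Q\in\q(P)}S_{Q}\bigr)=1$, and this union is a countable union of sets in $\f$, so it lies in $\f$. Since $P$ on $\mathcal{G}$ restricts to $P$ on $\f$, we also have $P\bigl(\compl{(\bigcup_{Q\in\q(P)}S_{Q})}\bigr)=0$ when $P$ is viewed on $\mathcal{G}$. We then define, for $A\in\mathcal{Z}$,
\begin{equation*}
P^{\mathcal{Z}}(A)=\sum_{Q\in\q(P)}P(A\cap S_{Q}).
\end{equation*}
This is well defined by the first step. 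It is countably additive by interchanging two sums of non-negative terms (Tonelli), exactly as in Theorem \ref{thm:hahn-extension-is-well-defined}.

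It remains to check that $P^{\mathcal{Z}}$ extends $P$. For $G\in\mathcal{G}$, the sets $G\cap S_{Q}$ with $Q\in\q(P)$ are pairwise disjoint elements of $\mathcal{G}$, and their union carries full $P$-mass. Therefore
\begin{equation*}
P(G)=P\Bigl(G\cap\bigcup_{Q\in\q(P)}S_{Q}\Bigr)=\sum_{Q\in\q(P)}P(G\cap S_{Q})=P^{\mathcal{Z}}(G).
\end{equation*}

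The only delicate point is the transfer of the countability and full-mass statement of Remark \ref{rem:R(Q)-at-most-countable} from $\f$ to $\mathcal{G}$. This works because the relevant set $\bigcup_{Q\in\q(P)}S_{Q}$ already belongs to $\f$, so its mass is the same under $P$ on $\f$ and under its extension to $\mathcal{G}$.

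If the supports were only $\pp$-q.s. disjoint, we would first disjointify them by removing a $\pp$-null set. Alternatively, we would pass to $\mathcal{G}^{\pp}$, using Lemma \ref{lemma:pre-hahn-loc-implies-pre-hahn-loc-of-closure}. We expect this modification to be the main technical nuisance, rather than the measure extension itself.

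We do not expect to need the hypothesis that $\LL(\mathcal{G},\pp)$ is a dual space for this lemma. That hypothesis should only enter later in the proof of Proposition \ref{prop:minimality-of-hahn-extension}, when comparing $\mathcal{Z}$ with $\mathcal{G}^{\pp}$.
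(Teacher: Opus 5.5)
Your proposal is correct and follows the paper's route. The paper's proof is just the good-sets argument of Lemma \ref{lemma:hahn-closure} with $\mathcal{G}$ in place of $\f$, leaving implicit the extension $P(A)=\sum_{Q\in\q(P)}P(A\cap S_{Q})$ from Theorem \ref{thm:hahn-extension-is-well-defined}; you spell that step out, and you correctly note that $\bigcup_{Q\in\q(P)}S_{Q}\in\f$ is what lets Remark \ref{rem:R(Q)-at-most-countable} carry over to $\mathcal{G}$.

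Your closing aside on merely q.s.-disjoint supports is not needed, since Section \ref{sec:hahn_extension} assumes strict disjointness. The fix you suggest, removing a null set, would not obviously work anyway: the union of the pairwise overlaps over uncountably many pairs need be neither measurable nor null.
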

\begin{proof}Same as the proof of Lemma \ref{lemma:hahn-closure}. 
\end{proof}

\begin{lemma}\label{lemma:min-ext:esssup}
  In the same setting as in Proposition \ref{prop:minimality-of-hahn-extension}, every family $E_{Q}\in \mathcal{G}$ with $E_{Q}\subseteq S_{Q}$, has a $\pp$-essential supremum $E\in \mathcal{G}$, i.e.,
  \begin{enumerate}
  \item $P(E_{Q}\setminus E) = 0$, for all $P\in \pp$.
  \item If for some $G\in \mathcal{G}$, we have $P(E_{Q}\setminus G) = 0$ for all $P\in \mathcal{P}$, then $P(E\setminus G) = 0$ for all $P\in \pp$.
  \end{enumerate}
  Moreover, we have $\ind_{E\cap S_{Q}} = \ind_{E_{Q}}$ $\pp$-q.s. 
\end{lemma}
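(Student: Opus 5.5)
The plan is to reduce the statement to Dedekind completeness of $\LL(\mathcal{G},\pp)$ and then copy the argument of $2\Rightarrow 3$ in Proposition \ref{thm:char:dedekind-complete}.

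\emph{Step 1: the dual hypothesis gives Dedekind completeness.} Suppose $\LL(\mathcal{G},\pp)$ is the dual of a normed space. I want to conclude that $\LL(\mathcal{G},\pp)$ is Dedekind complete. I would try two routes, in this order:
\begin{enumerate}
\item Invoke Theorem 5.3 of \cite{liebrich-model-uncertainty:reverse-approach}, which characterizes Dedekind completeness of $\LL$ through its duality.
\item If that theorem only covers the supported case, argue directly. $\LL(\mathcal{G},\pp)$ is a commutative unital $C^*$-algebra. It is also an AM-space with unit $\ind_\Omega$, so it is isometrically lattice isomorphic to some $C(K)$. Because it is a dual Banach space, Sakai's theorem makes it a $W^*$-algebra. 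Commutative $W^*$-algebras are monotone complete, so $K$ is hyperstonean. Hence every order-bounded family in $\LL(\mathcal{G},\pp)$ has a supremum.
\end{enumerate}
I expect this step to be the main obstacle. One has to make sure the isometric dual structure really transfers to the lattice order. The lattice order agrees with the $C^*$-order, and a $C^*$-algebra that is a dual Banach space is a von Neumann algebra. That is exactly what is needed here.

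\emph{Step 2: the supremum is an indicator.} Take the family $\{\ind_{E_Q}\}_{Q\in\q}\subseteq\LL(\mathcal{G},\pp)$, which is bounded by $\ind_\Omega$, and let $g$ be its supremum.
\begin{itemize}
\item Since $0\vee(g\wedge 1)$ is also an upper bound, we may take $0\le g\le 1$ $\pp$-q.s.
\item Put $E=\{g=1\}\in\mathcal{G}$. Since $g\ge\ind_{E_Q}$ q.s., we get $g=1$ q.s. on $E_Q$, so $\ind_E\ge\ind_{E_Q}$ q.s. for every $Q$. Thus $\ind_E$ is an upper bound, and $g\le\ind_E\le g$, i.e.\ $g=\ind_E$ q.s.
\item Property (1) says exactly that $\ind_E$ is an upper bound.
\item For property (2), take $G\in\mathcal{G}$ with $P(E_Q\setminus G)=0$ for all $P\in\pp$ and $Q\in\q$. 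Then $\ind_G$ is an upper bound, so $\ind_E\le\ind_G$ q.s., i.e.\ $P(E\setminus G)=0$ for all $P\in\pp$.
\end{itemize}

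\emph{Step 3: $\ind_{E\cap S_Q}=\ind_{E_Q}$ $\pp$-q.s.} This follows the argument of $2\Rightarrow3$ in Proposition \ref{thm:char:dedekind-complete}. Fix $Q\in\q$ and define
\begin{equation*}
h=\ind_{E_Q}\ind_{S_Q}+\ind_E\ind_{\compl{S_Q}},
\end{equation*}
which is $\mathcal{G}$-measurable since $S_Q\in\f\subseteq\mathcal{G}$.
\begin{itemize}
\item For $R\ne Q$, the set $E_R\subseteq S_R$ meets $S_Q$ only in a $\pp$-polar, since the supports are $\pp$-q.s.\ disjoint. Hence $h\ge\ind_{E_R}$ q.s.
\item For $R=Q$, clearly $h\ge\ind_{E_Q}$.
\item Since $\ind_{E_Q}\le\ind_E$ q.s., we also have $h\le\ind_E$.
\end{itemize}
So $h$ is an upper bound below the least upper bound $\ind_E$, and therefore $h=\ind_E$ q.s. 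Restricting to $S_Q$ gives $\ind_{E\cap S_Q}=\ind_{E_Q}$ $\pp$-q.s., which completes the proof.
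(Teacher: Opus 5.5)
Your proof is correct and follows essentially the paper's route. Like the paper, you get Dedekind completeness of $\LL(\mathcal{G},\pp)$ from Theorem~5.3 of \cite{liebrich-model-uncertainty:reverse-approach}, set $E=\{g=1\}$ for the order supremum $g$ of the indicators, and use the modified upper bound from the proof of $2\Rightarrow3$ in Proposition~\ref{thm:char:dedekind-complete} to obtain $\ind_{E\cap S_Q}=\ind_{E_Q}$.

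The only real difference is the order of the steps. You get $g=\ind_E$ and property~2 directly from the least-upper-bound property, after truncating $g$ to $[0,1]$. The paper instead first proves $g\ind_{S_Q}=\ind_{E_Q}$ and then uses that each $P$ is carried by the countably many $S_Q$, $Q\in\q(P)$ (Remark~\ref{rem:R(Q)-at-most-countable}). As a result, your essential-supremum part does not depend on the localization structure.
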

\begin{proof}
  Theorem 5.3 in \cite{liebrich-model-uncertainty:reverse-approach} implies that $\LL(\mathcal{G}, \pp)$ is Dedekind-complete. 
  Let $g$ be the (order)-supremum of $\ind_{E_{Q}}$, and set $E = g^{-1}(\{1\})$. 
  In the proof of Proposition \ref{thm:char:dedekind-complete}, we have already seen that $g\ind_{S_{Q}} =\ind_{E_{Q}}$ $\pp$-q.s.  
  Next, we show that $g = \ind_{E}$ $\pp$-q.s.
  Set $A = \{g \neq 0,1\}$, and recall that $g\ind_{S_{Q}} = \ind_{E_{Q}}$ $\pp$-q.s.
  Hence, $P(A) = 0$ for all $P\in \pp$, and $g\in \{0,1\}$ $\pp$-q.s. 
  In other words, we have $g = \ind_{E}$ $\pp$-q.s. 
  Note that this immediately implies $\ind_{E\cap S_{Q}} = \ind_{E_{Q}}$ $\pp$-q.s. 

Finally, we are going to show that $E$ is an essential supremum of $\{E_{Q}\}_{Q\in \mathcal{Q}}$. 
We start by showing that $P(E_{Q}\setminus E) = 0$ for all $P\in \mathcal{P}$. 
Note that 
\begin{equation*}
  \ind_{E_{Q}\setminus E} = \ind_{E_{Q}}\ind_{\compl{E}} = \ind_{E_{Q}}(1 - \ind_{E}) = \ind_{E_{Q}}(1-\ind_{E_{Q}}) = 0,\quad \pp\text{-q.s.,}
\end{equation*}
since $\ind_{E}\ind_{S_{Q}} = g\ind_{S_{Q}} = \ind_{E_{Q}}$ $\pp$-q.s., and $E_{Q}\subseteq S_{Q}$. 
Next, we assume that $G\in \mathcal{G}$ satisfies $P(E_{Q}\setminus G) = 0$ for all $P\in \pp$. We need to show that $P(E\setminus G) = 0$ for all $P\in \pp$. 
Since $\ind_{E\cap S_{Q}} = \ind_{E_{Q}}$ $\pp$-q.s., we have
\begin{equation*}
  P(E\setminus G) = \sum_{Q\in \mathcal{Q}(P)} P(S_{Q}\cap E\cap \compl{G}) = \sum_{Q\in \mathcal{Q}(P)} P(E_{Q}\cap \compl{G})  = 0. 
\end{equation*}
\end{proof}

\begin{proof}[Proof (Proposition \ref{prop:minimality-of-hahn-extension})]
  We will show that $\mathcal{G}^{\pp}$ contains the generator
  \begin{equation*}
    \mathcal{F}\cap \biggl\{ \bigcup_{Q\in \mathcal{Q}} E_{Q}\, \Big|\, E_{Q}\subseteq S_{Q}, E_{Q}\in \mathcal{F} \biggr\} 
  \end{equation*}
  of $\mathcal{H}_{\mathcal{F}}^{\mathcal{Q}}$. 
  We have $\mathcal{F} \subseteq \mathcal{G}$ by assumption, and hence, we only need to show that for every $E_{Q}\in \mathcal{F}$ with $E_{Q} \subseteq S_{Q}$, we have an $E\in \mathcal{G}$ such that 
  \begin{equation*}
    P \biggl( E \triangle \biggl( \bigcup_{Q\in \mathcal{Q}}E_{Q} \biggr) \biggr) = 0. 
  \end{equation*}
  Note that by Lemma \ref{lemma:min-ext:sigma-alg}, the above expression is well-defined
  We choose the essential supremum $E$ of $\{E_{Q}\}_{Q\in \mathcal{Q}}$ (whose existence is guaranteed by Lemma \ref{lemma:min-ext:esssup}), and set $U=\bigcup_{Q\in \mathcal{Q}} E_{Q}$. 
  First, note that 
  \begin{equation*}
    P(U\setminus E) = \sum_{Q\in \mathcal{Q}(P)} P(S_{Q}\cap U \cap \compl{E}) = \sum_{Q\in \mathcal{Q}(P)} P(E_{Q}\setminus E) = 0.
  \end{equation*}
  Moreover, since $\ind_{E\cap S_{Q}} = \ind_{E_{Q}}$ $\pp$-q.s. (see Lemma \ref{lemma:min-ext:esssup}), we have
  \begin{equation*}
    P(E\setminus U) = \sum_{Q\in \mathcal{Q}(P)} P(S_{Q}\cap E \cap \compl{U}) \leq \sum_{Q\in \mathcal{Q}(P)} P(S_{Q}\cap E \cap \compl{E_{Q}}) = 0. 
  \end{equation*}

\end{proof}

\section{Examples of pre-Hahn-localizable models and a non-example}\label{sec:examples}
Let us start with the most simple example: arbitrary discrete measures.
\begin{example}[Arbitrary families of discrete measures]
  Let $\Omega$ be an arbitrary set, $A \subseteq \Omega$ an arbitrary subset, and consider $\mathcal{Q} = \{\delta_{x}\mid x\in A\}$, the set of Dirac probability masses on $A$, and $\pp = \sconv(\mathcal{Q})$. 
  Clearly, $\pp$ is pre-Hahn-localizable, and if the $\sigma$-algebra on $\Omega$ is chosen to be its power set, it is Hahn-localizalbe. 
\end{example}

The next class of examples arises whenever the ``parameters'' of a stochastic model determine the distribution of a stochastic process uniquely, and can be consistently estimated, in the sense that a strong law of large numbers applies. 
Here, a dichotomy arises. 
In many cases the law of a finite sequence $X_{1},\dots,X_{n}$ of such random variables is dominated by a $\sigma$-finite measure, but the law of the entire process $(X_{t})_{t\geq 1}$ is not. 
Our theory covers this exact case. 
This is illustrated in the following example. 

\begin{example}
  Let us denote with $\mathcal{H}$ the set of all positive semi-definite symmetric $d\times d$ matrices.
  We assume that the sequences of i.i.d. $\mathbb{R}^{d}$-valued Gaussian random variables $X^{\mu, \Sigma} = (X_{t}^{\mu, \Sigma})_{t\geq 1}$, $\mu\in \mathbb{R}^{d}$ and $\Sigma\in \mathcal{H}$, are defined on a common probability space  $(\Omega, \mathcal{F}, P)$. 
  Clearly, the pushforward of $P$  under any finite sequence $(X_{1}^{\mu, \Sigma}, \dots, X_{n}^{\mu, \Sigma})$ is dominated by the Lebesgue measure. 
  However, the pushforward of $P$ under the entire sequence $X^{\mu, \Sigma}$ is supported on the set
  \begin{equation*}
    S_{\mu, \Sigma} = \biggl\{ x\in \mathbb{R}^{\mathbb{N}}\, \Big|\, \lim_{n\to \infty} \frac{1}{n} \sum_{k=1}^{n} x_{k} = \mu\text{ and } \lim_{n\to\infty} \frac{1}{n}\sum_{k=1}^{n}x_{k}x_{k}^{T} = \Sigma\biggr\}. 
  \end{equation*}
  Note that $S_{\mu_{1}, \Sigma_{1}}$ and $S_{\mu_{2},\Sigma_{2}}$ are disjoint, unless $\mu_{1} = \mu_{2}$ and $\Sigma_{1} = \Sigma_{2}$. 
We define $\mathcal{Q} = \{Q_{\mu,\Sigma} = P_{X^{\mu, \Sigma}}\mid \mu\in \mathbb{R}^{d}, \Sigma \in \mathcal{H}\}$, and $\pp = \sconv(\mathcal{Q})$. 
Then $\pp$ is pre-Hahn-localizable, with localization $\mathcal{Q}$. 

It is important to note that there is nothing special about Gaussian measures. 
The same argument applies to any sequence of random variables whose distribution is uniquely determined by a set of parameters we can estimate almost surely. 
In particular, the sequences need not be i.i.d. 
\end{example}

The next example shows that the parameters in question by no means need to be finite dimensional. 
\begin{example}
We denote with 
\begin{equation*}
  \mathcal{D} = \biggl\{  g(t)=\sum_{h\in \mathbb{Z}} \gamma(h) e^{iht}\, \Big| \, \sum_{h\in \mathbb{Z}}|\gamma(h)| < \infty, \gamma(0)> 0, \text{ and } \gamma(h) = \gamma(-h), h\in \mathbb{Z} \biggr\}
\end{equation*}
the set of potential spectral densities of a real-valued stochastic process. 
We assume that the gaussian processes $X^{\mu,g} = (X_{t}^{\mu,g})_{t\in \mathbb{Z}}$ with mean $\mu$ and spectral density $g$ are defined on a common probability space $(\Omega, \mathcal{F}, P)$, for $\mu\in \mathbb{R}$, and $g\in \mathcal{D}$. 
The pushforward of $P$ under $X^{\mu,g}$ is supported on the set
\begin{equation*}
  S_{\mu, g} = \biggl\{ x\in \mathbb{R}^{\mathbb{Z}}\, \Big|\, \lim_{n\to \infty} \frac{1}{n} \sum_{t=1}^{\infty} x_{t} = \mu, \lim_{n\to \infty} \frac{1}{n} \sum_{t=1}^{n} x_{t+l}x_{t+j} = \frac{1}{2 \pi}\int_{0}^{2 \pi} g(\theta) e^{-i(l-j)\theta} d\theta, l,j\in \mathbb{Z} \biggr\}. 
\end{equation*}
Again, the sets $S_{\mu_{1}, g_{1}}$ and $S_{\mu_{2}, g_{2}}$ are disjoint, unless $\mu_{1} = \mu_{2}$ and $g_{1} = g_{2}$. 
We set $\mathcal{Q} = \{P_{X^{\mu, g}}\mid \mu\in \mathbb{R}, g\in \mathcal{D}\}$, and $\pp = \sconv(\mathcal{Q})$. 
  The set $\pp$ is pre-Hahn-localizable with localization $\mathcal{Q}$. 
\end{example}
The above examples work in every parametric model, whose parameters can be estimated almost surely.

\begin{example}[Parametric models]
Let $(\Omega, \mathcal{F}, P)$ be a probability space, and let $P_{\theta}$ be the distribution of the sequence $X^{\theta} = (X_{n}^{\theta})_{n\geq 1}$ under $P$, where $\theta \in \Theta \subseteq M$ is a parameter taking values in some metric space $(M,d)$.
We assume that the $X_{n}^{\theta}$ take values in some measurable space $(\Sigma, \mathcal{G})$.
We say that the parametric model $(P_{\theta})_{\theta\in \Theta}$ satisfies an abstract law of large numbers, if for every $n\geq 1$, there is a measurable function $T_{n}: \Sigma^{n}\to M$, such that 
\begin{equation*}
  T_{n}(X_{1}^{\theta}, \dots, X_{n}^{\theta}) \to \theta, \quad P\text{-a.s.}
\end{equation*}
In the above setting, the measures $P_{\theta}$ are supported on the sets 
\begin{equation*}
  T_{\theta} = \biggl\{ x \in \Sigma^{\mathbb{N}} \, \Big|\,  \lim_{n\to \infty} T_{n}(x_{1},\dots, x_{n})  = \theta\biggr\}, 
\end{equation*}
and the family $\pp = \sconv(P_{\theta}\mid \theta \in \Theta)$ is pre-Hahn-localizable. 
\end{example}

The following corollary to Kakutani's Theorem shows that this principle extends to non-parametric settings as well. 
\subsection{A corollary to Kakutani's Theorem}
Let us briefly recall the setup of Kakutani's seminal paper \cite{kakutani}. 
Let $(\Omega_{k}, \mathcal{F}_{k}, \mu_{k})$ be a sequence of probability spaces for $k\in \mathbb{N}$.
We set $\Omega = \bigtimes_{k\geq 1} \Omega_{k}$, $\mathcal{F} = \bigotimes_{k\geq 1} \mathcal{F}_{k}$, and $\mu = \bigotimes_{k\geq 1} \mu_{k}$. 
If we are given a family of sequence of measures $Q_{k}^{i}$ on $\mathcal{F}_{k}$, such that $Q_{k}^{i}\sim \mu_{k}$, meaning that $Q_{k}^{i}$ and $\mu_{k}$ have the same nullsets, we can define the quasi-metric
\begin{equation*}
  \rho(Q_{k}^{i}, \mu_{k}) = \int_{\Omega_{k}} \sqrt{\frac{dQ_{k}^{i}}{d\mu_{k}}} d \mu_{k}. 
\end{equation*} 
The probability measures $Q = \bigotimes_{k\geq 1} Q_{k}^{i}$ und $\mu$ are singular, if and only if $\prod_{k\geq 1} \rho(Q_{k}^{i}, \mu_{k})>0$ (see p. 218, \cite{kakutani}). 
Moreover, we can define 
\begin{equation*}
  \varphi_{k}^{i} = \frac{d Q_{k}^{i}}{d \mu_{k}} \circ p_{k},
\end{equation*}
where $p_{k}: \Omega\to \Omega_{k}$ is the projection onto the $k$-th coordinate. 
It has been shown (p. 221, \cite{kakutani}) that $\prod_{k=1}^{n} \varphi_{k}^{i}$ converges $\mu$-a.s.
Let 
\begin{equation*}
  \mathcal{C}_{0} = \biggl\{\lim_{n\to \infty} \prod_{k=1}^{n} \varphi_{k}^{i}\mid i\in I\biggr\},
\end{equation*}
be the set of potential limit functions.
From $\mathcal{C}_{0}$, we select (by, e.g., using the Zorn's Lemma) a family of functions which are different at every $\omega\in \Omega$, i.e., for all $\omega\in \Omega$ and $f,g\in \mathcal{C}$, we have $f(\omega) \neq g(\omega)$. 
We have the following corollary.
\begin{corollary}
  The family $\mathcal{P} = \sconv(\{\bigotimes_{k\geq 1} Q_{k}^{i_{f}}\mid f\in \mathcal{C}\})$ is pre-Hahn-localizable. 
\end{corollary}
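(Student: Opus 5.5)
The plan is to verify Definition \ref{def:hahn-loc} directly, with localization $\q = \{Q^{f} \mid f\in \mathcal{C}\}$, where $Q^{f} = \bigotimes_{k\geq 1} Q_{k}^{i_{f}}$. For the support sets I take level sets of the Kakutani limits. Write $\varphi^{f} = \limsup_{n\to\infty}\prod_{k=1}^{n}\varphi_{k}^{i_{f}}$, which is an everywhere-defined measurable version of $f$ (taking values in $[0,\infty]$), and put
\begin{equation*}
  S_{f} = \Bigl\{\omega\in\Omega \,\Big|\, \lim_{n\to\infty}\prod_{k=1}^{n}\varphi_{k}^{i_{f}}(\omega) \text{ exists in } [0,\infty] \text{ and equals } +\infty\Bigr\}\in\f .
\end{equation*}
I will then adjust these sets so that they become pairwise disjoint.

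\textbf{Step 1: $Q^{f}(S_{f}) = 1$.} If $Q^{f}\perp\mu$, the classical martingale argument underlying Kakutani's dichotomy applies: the likelihood ratio process $\prod_{k=1}^{n}\varphi_{k}^{i_{f}}$ is a $\mu$-martingale, and the same process, viewed under $Q^{f}$, has reciprocal that is a nonnegative $Q^{f}$-supermartingale. Hence it converges $Q^{f}$-a.s. By Lebesgue decomposition of $Q^{f}$ with respect to $\mu$ along the filtration, the limit is $+\infty$ $Q^{f}$-a.s. exactly on the singular part. This gives $Q^{f}(S_{f})=1$ and $\mu(S_{f})=0$.

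If instead $Q^{f}\sim\mu$, the natural support is $\{\varphi^{f}\in(0,\infty)\}$. Here the sets for different $f$ are not disjoint, and I would expect at most one such $f$ to survive the selection of $\mathcal{C}$: two equivalent product measures would have limits that are a.s. comparable, yet by the pointwise-distinctness requirement they can never coincide. I will need to treat this case separately, possibly by setting aside one exceptional measure.

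\textbf{Step 2: disjointness, the main obstacle.} The selection of $\mathcal{C}$ (pointwise distinct limits) is designed to separate the measures. My plan is to use the Kakutani dichotomy pairwise. For $f\neq g$, the product measures $Q^{f}, Q^{g}$ are either equivalent or singular. Equivalence would force $f/g = dQ^{f}/dQ^{g}$ on the tail structure, and I would try to show that this contradicts the construction. So $Q^{f}\perp Q^{g}$, and consequently $Q^{g}(S_{f})=0$ if I refine the supports to
\begin{equation*}
  S_{f}' = S_{f}\cap\Bigl\{\lim_{n\to\infty}\tfrac{\prod_{k\le n}\varphi_{k}^{i_{g}}}{\prod_{k\le n}\varphi_{k}^{i_{f}}} = 0\Bigr\}.
\end{equation*}
The difficulty is that there are uncountably many $g$, so this intersection is not measurable. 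To get around this, I would reuse the same martingale argument with $Q^{f}$ itself in the role of the reference measure. The set where all ratios to $f$ tend to zero can be described through a single statistic, namely $\varphi^{f}$ compared against the tail behaviour, and this is exactly what the pointwise-distinctness choice allows. I expect this step to be the hardest. If it fails, I will settle for $\pp$-q.s. disjointness, which Definition \ref{def:hahn-loc} permits, since it only needs $Q(S_{R}) = \delta_{QR}$.

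\textbf{Step 3: the domination conditions.} Both $\q\lll\pp$ and $\pp\lll\sconv(\q)$ are immediate because $\pp=\sconv(\q)$. Each $Q^{f}$ lies in $\pp$, and each $P\in\pp$ is itself an element of $\sconv(\q)$. Together with Steps 1 and 2 this verifies Definition \ref{def:hahn-loc}.
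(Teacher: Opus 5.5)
\textbf{Step~2 is a genuine gap, and it cannot be closed with your localization $\q=\{Q^{f}\mid f\in\mathcal{C}\}$.} The requirement $Q(S_{R})=\delta_{QR}$ in Definition~\ref{def:hahn-loc} already forces the members of a localization to be pairwise singular. So falling back to ``$\pp$-q.s.\ disjointness'' changes nothing. Pointwise distinctness of the limit functions does not give singularity. Take $\Omega_{k}=\{0,1\}$ with $\mu_{k}$ the fair coin. Let $Q^{1},Q^{2}$ coincide with $\mu$ in every coordinate $k\geq 2$, with $Q^{1}_{1}(\{1\})=3/4$ and $Q^{2}_{1}(\{1\})=1/4$. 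The products $\prod_{k\leq n}\varphi^{i}_{k}$ are constant in $n$. The limit $f_{1}$ equals $3/2$ or $1/2$ according as $\omega_{1}=1$ or $\omega_{1}=0$, and $f_{2}$ takes the opposite values. Hence $f_{1}(\omega)\neq f_{2}(\omega)$ for every $\omega$. Yet $Q^{1}\sim Q^{2}\sim\mu$, so no measurable $S$ has $Q^{1}(S)=1$ and $Q^{2}(S)=0$. The same example refutes two of your claims: that at most one $\mu$-equivalent member survives the selection (Step~1), and that equivalence of $Q^{f}$ and $Q^{g}$ would contradict the construction (Step~2). The paper's own one-line argument infers disjointness of the sets $\{\lim_{n}\prod_{k\leq n}\varphi^{i_{f}}_{k}=f\}$ from pointwise distinctness of the $f$'s. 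It fails on this example too, since both of its sets are all of $\Omega$.

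\textbf{The missing observation is that pointwise distinctness restricts the \emph{singular} members instead.} If $Q^{f}\perp\mu$, the $\mu$-a.s.\ limit of the likelihood-ratio martingale $\prod_{k\leq n}\varphi^{i_{f}}_{k}$ is the density of the $\mu$-absolutely continuous part of $Q^{f}$, i.e.\ $0$. So two $\mu$-singular members of $\mathcal{C}$ would have limit functions agreeing on a $\mu$-full set. Hence $\mathcal{C}$ contains at most one $f_{0}$ with $Q^{f_{0}}\perp\mu$, and by Kakutani's dichotomy every other $Q^{f}$ is equivalent to $\mu$. In particular $\pp$ is dominated by the finite measure $\mu+Q^{f_{0}}$.

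Now use a two-element localization. Fix one $f_{1}$ with $Q^{f_{1}}\sim\mu$ and put $\q=\{Q^{f_{0}},Q^{f_{1}}\}$. Take $S_{f_{0}}=\{\lim_{n}\prod_{k\leq n}\varphi^{i_{f_{0}}}_{k}=+\infty\}$, which is your Step~1 set, with $Q^{f_{0}}$-measure one and $\mu$-measure zero. Take $S_{f_{1}}=\Omega\setminus S_{f_{0}}$. Then $Q(S_{R})=\delta_{QR}$ and $\q\subseteq\pp$. Every $\sum_{k}\lambda_{k}Q^{f_{k}}\in\pp$ is absolutely continuous with respect to $\tfrac12(Q^{f_{0}}+Q^{f_{1}})$. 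If one of the two types is absent, a one-element localization with support $\Omega$ suffices. Alternatively, apply Lemma~\ref{lemma:char-of-sigma-convex-dominated-families} to the $\sigma$-convex, dominated family $\pp$.

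Your Step~1 martingale argument is correct and is exactly what is needed for $S_{f_{0}}$. Step~3 must be replaced by the domination argument above, since $\pp\neq\sconv(\q)$ for this $\q$.
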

\begin{proof}
  The measure $Q^{f} = \bigotimes_{k\geq 1} Q_{k}^{i_{f}}$ is supported on the set
  \begin{equation*}
    S_{f} = \biggl\{ \omega\in \Omega\mid \lim_{n\to \infty}\prod_{k=1}^{n} \varphi_{k}^{i_{f}}(\omega) = f(\omega)\biggr\}. 
  \end{equation*}
  Since the functions in $\mathcal{C}$ are different at each point, we get $S_{f}\cap S_{g} = \varnothing$. 
\end{proof}
 The next example shows that the theory applies to continuous-time processes as well. 
\subsection{Black-Scholes model with volatility and drift uncertainty}
In this example, we show that our theory can be used to  construct a  Black-Scholes model with uncertain constant volatility and drift and robust $L^1$-$L^{\infty}$ duality. 
Following Example~3.8 in  \cite{cohen} and the discussion in \cite{liebrich-model-uncertainty:reverse-approach} on volatility uncertainty (see 3.2.2 there) we define a special case of their setting.
Let $\Omega$ be the Wiener space with canonical process $(W_t)_{t\geq0}$ starting at 0.
That is, $\Omega$ consists of all continuous functions $\omega:[0,+\infty)\to\mathbb{R}$ with $\omega(0)=0$ and $W_t(\omega)=\omega(t)$, $t\in[0,+\infty)$. Let $(\mathcal{F}_t)_{t\geq0}$ be the natural filtration, i.e., $\mathcal{F}_t=\sigma(\{W_s, 0\leq s\leq t\})$, $t\geq0$.
Let $P_0$ be the Wiener measure. Then $(W_t)_{t\geq0}$ is a standard Brownian motion with respect to $P_0$ for the given filtration. 
There is an adapted process $\langle W\rangle$ such that under each probability measure $P$, with respect to which $(W_t)_{t\geq0}$ is a local martingale, $\langle W\rangle$ agrees with the usual quadratic variation, see \cite{Kar}. Fix a $0<T<\infty$ and a set $\Sigma\subseteq(0,+\infty)$ of possible constant strictly positive volatilities  $\sigma\in\Sigma$.  
For each $\sigma\in\Sigma$ let $Q^{\sigma}$ be such that under $Q^{\sigma}$ the canonical process is a local martingale with quadratic variation $v^{\sigma}_t=\sigma^2t$, $0\leq t\leq T$. Define $S_{Q^{\sigma}}=\{\omega: \langle W\rangle_t=v_t^{\sigma}\text{ for all }s\leq T\}\in\mathcal{F}_T$. Then $Q^{\sigma}(S_{Q^{\sigma}})=1$ and the sets  $S_{Q_{\sigma_{1}}}$ and $S_{Q_{\sigma_{2}}}$ are disjoint for $\sigma_1\neq \sigma_2$.
Observe that this is a special case of  the example in \cite{cohen} and the sets given in \cite{liebrich-model-uncertainty:reverse-approach}, which treat more general choices of processes $v^{\sigma}_t$. This already shows that we can treat a robust Black-Scholes model with uncertain constant volatility as the given setting is pre-Hahn-localizable with disjoint support sets $S_{Q^{\sigma}}$. 
Let us now find a way to introduce an uncertain constant drift parameter $\mu$. To this end let $M\subseteq\mathbb{R}$ be a set of possible constant drift parameters. Fix an arbitrary $\sigma\in\Sigma$. 
Observe that $W_t=\sigma W^{\sigma}_t$, $Q^{\sigma}$-a.s., where $(W^{\sigma}_t)_{t\geq0}$ is a $Q^{\sigma}$-standard Brownian motion. For each $\mu\in M$ define a probability measure $P^{\mu,\sigma}\sim Q^{\sigma}$ with $\frac{dP^{\mu,\sigma}}{dQ^{\sigma}}=\exp(\frac{\mu}{\sigma}W^{\sigma}_T-\frac{\mu^2}{2\sigma^2}T)$. Then, by Girsanov's Theorem, see, e.g., \cite{KaratzasShreve},  $W^{\mu, \sigma}_t:=W^{\sigma}_t-\frac{\mu}{\sigma}t$, $t\in[0,T]$, is a standard Brownian motion with respect to $P^{\mu,\sigma}$. Moreover,   the following holds $P^{\mu,\sigma}$-a.s.
\begin{equation}\label{robustBS}
W_t=\sigma W^{\sigma}_t=\sigma\biggl(W^{\sigma}_t-\frac{\mu}{\sigma}t\biggr)+\mu t=\sigma W^{\mu, \sigma}_t +\mu t.
\end{equation}
Define now $\mathcal{P}=\sconv\{P^{\mu,\sigma}, \mu\in M, \sigma\in\Sigma\}$ and $\mathcal{Q}=\{Q^{\sigma}, \sigma\in\Sigma\}$. 
Then $\q\lll \pp\lll \sconv(\q)$ as, for each $\sigma\in\Sigma$, we have that $P^{\mu,\sigma} \sim Q^\sigma$ for each $\mu \in M$.
This shows that $\pp$ is pre-Hahn-localizable with
strictly disjointly supported localization $\q$. Define now the stock price process $(S_t)_{t\geq0}$ on the Wiener space with $S_0=s_0\in(0,+\infty)$ as  the stochastic exponential of the canonical process under the measures $P\in\mathcal{P}$, i.e., for each $P\in\pp$, $S$ satisfies the following stochastic differential equation:
$$dS_t=S_tdW_t.$$ Then, for $P=P^{\mu,\sigma}$, by (\ref{robustBS}), the stochastic differential equation looks as follows
$$dS_t=S_t(\sigma dW^{\mu,\sigma}_t+\mu dt),$$
which is the usual definition of the stock price process in a Black-Scholes model with $\mu$ and $\sigma$ fixed, that is, $S_t=s_0\exp(\sigma W^{\mu,\sigma}_t+\mu t), t\in[0,T]$, $P^{\mu,\sigma}$-a.s.
Therefore $(S_t)_{t\in[0,T]}$ together with $\pp$ can be seen as robust version of the classical Black Scholes model with uncertain drift and volatility (which includes even mixtures of countably many choices of parameters in the measures). As $\pp$ is pre-Hahn-localizable our theory can be applied to obtain a probability space, for which this robust Black-Scholes model  satisfies the robust $L^{1}$-$L^{\infty}$ duality. The model can be further generalized by using the ideas of \cite{cohen} and the discussion of \cite{liebrich-model-uncertainty:reverse-approach} on volatility uncertainty.
Looking at the examples above, one could gain the impression that our theory only applies to infinite sequences. 
The next example shows that this is not the case.

\subsection{A one step robust binomial model and its Hahn extension}\label{binom1}   
We will define a robust one step binomial model. Note that this will just be the robust binomial model  of \cite{blanch_carassus} for the time set $\mathbb{T}=\{0,1\}$.
Let us recall the model. Let $\Omega=(0,\infty)$ with the Borel $\sigma$-algebra $\mathcal{F}=\mathcal{B}((0,\infty))$ on it. Assume that the riskless interest rate $r=0$.  The price process of the risky asset $(S_t)_{t\in\mathbb{T}}$ is given by $S_0=1$ and $S_{1}=S_0Y_{1}$, where $Y_{1}: (0,+\infty)\to(0,+\infty)$ is a bijective measurable
map. In the one-step case, we can just use the identity map $Y_1(\omega)=\omega$. Let $\mathcal{M}_1$ be the set of all probability measures on $((0,\infty),\mathcal{F})$. Define the model parameters with uncertainty such that the assumptions for each $t$ in \cite{blanch_carassus} apply here for the easy case $t=0$ as follows:
\begin{mass}[Blanchard, Carassus for $t=0$]\label{ass_blan_car_1step} Let $u_0, U_0, d_0, D_0,\pi_0, \Pi_0\in\mathbb{R}$  such that
\begin{enumerate}
\item $0<{\pi}_0\leq  {\Pi}_0<1$,
\item $d_0\leq D_0$, $u_0\leq U_0$,
\item $0<d_0<1<U_0$.
\end{enumerate}
\end{mass}
Let us now define the set of possible models which amounts to the corresponding definitions for $t=T=1$ in \cite{blanch_carassus} with a similar notation as in \cite{liebrich-model-uncertainty:reverse-approach}. For the definition of the set $\mathcal{P}$ the following sets are used. Let $E_0\subset \mathbb{R}^3$ be defined as $E_0=[u_0,U_0]\times[d_0,D_0]\times [{\pi}_0,{\Pi}_0]$ and let $\mathcal{L}_1=\{\pi\delta_{u}+(1-\pi)\delta_{d}\mid (u,d,\pi)\in E_0\}$. We will now define the crucial sets of probability measures describing the uncertainty set of the robust one step binomial model. For a probability measure $R$ on $(\Omega, \mathcal F)$ denote by $R\circ Y_1^{-1}$ the law of $Y_1$ with respect to $R$, that is, for $A\in\mathcal{B}((0,\infty))$, $R\circ Y_1^{-1}(A)=R(Y_1\in A)$.

\begin{mdef}\label{uncertainty_sets} Let
   \begin{equation}
\mathcal{R} =\bigl\{R\in\mathcal{M}_1\mid R\circ Y_{1}^{-1}\in \mathcal{L}_{1}\bigr\}.\label{supported_alt_0}\end{equation}The uncertainty set of \cite{blanch_carassus} (for $t=T=1$) is given as the set $\mathcal{P}$ of all convex combinations of measures in $\mathcal{R}$.
For our purposes we will also allow  $\sigma$-convex-combinations as well, hence we define a slighty extended uncertainty set $\widetilde{\pp}$ as well:
\begin{align}
\mathcal{P} &=\co(\mathcal{R})\label{uncer_set}\\
\widetilde{\pp} &=\sconv(\mathcal{R})=\biggl\{P=\sum_{k=1}^\infty\alpha_kR_k\, \Big|\, R_k\in\mathcal{R}, 0\leq \alpha_k\leq1, \sum_{k=1}^{\infty}\alpha_k=1\biggr\} \label{sigma_pp}\end{align}
\end{mdef}

\begin{remark}\label{rem_supp}
\begin{enumerate}
\item Observe that $\mathcal{R}=\mathcal{R}_1$ and $\mathcal{P}=\mathcal{P}_1$ amount to the analogous sets of \cite{blanch_carassus} and \cite{liebrich-model-uncertainty:reverse-approach} for $t=T=1$. 
\item Observe that the law of $Y_1$ under each measure $R\in\mathcal{R}$ obviously is a Binomial distribution satisfying  $R(Y_1=u_R)={\pi}_R$ and $R(Y_1=d_R)=1-{\pi}_R$ for some $(u_R,d_R,{\pi}_R)\in E_0$. 
By \cite{liebrich-model-uncertainty:reverse-approach}, Proposition~3.10 
applied to the case $t=T=1$, the set $\mathcal{R}$ is a supported alternative of $\mathcal{P}$. The support set $S_R$  of a measure $R\in\mathcal{R}$ with parameters $(u_R,d_R,{\pi}_R)\in E_0$  is given by
\begin{equation}\label{support_R}
   S_R=\{Y_1=u_R\}\cup\{Y_1=d_R\}. 
\end{equation}
Note that, as $Y_1$ is bijective, $S_R=\{Y_1^{-1}(u_R), Y_1^{-1}(d_R)\}$ which, for the identity function  $Y_1(\omega)=\omega$, reduces to $S_R=\{u_R, d_R\}$.
Equation \eqref{support_R} follows from  \cite{liebrich-model-uncertainty:reverse-approach}, Appendix~D, where the proof is given for the technically involved multiperiod case. In the one period case it is obvious. Observe that the concrete choice of $\pi\in[\pi_0,\Pi_0]$ does not change the set $S_R$. 
\end{enumerate}
\end{remark}

\begin{mnot}\label{nota_d_u}
Let $R\in\mathcal{R}$ . 
In the remainder of this subsection and in Section~\ref{binomNA} we will use the notation $u_R$ and $d_R$ for the parameters of the up and down values of $R$, where $u_R\in[u_0,U_0]$, $d_R\in[d_0,D_0]$.
\end{mnot}

We aim at an application of Lemma~\ref{lemma:equivalence_(S)_preHahnloc} and of Section~\ref{sec:hahn_extension} to the one-step binomial model. To this end we would like to find a supported alternative $\widetilde{\rrr}$ for $\widetilde{\pp}$ such that $S_{R_1}\cap S_{R_2}=\varnothing$ for $R_1,R_2\in\widetilde{\rrr}$, $R_1\ne R_2$. 
To achieve this, we will slightly strengthen Assumption~\ref{ass_blan_car_1step} by the following additional restrictions on the possible parameters.

\begin{mass}\label{addass_ud}
    Assume that all condition of Assumption~\ref{ass_blan_car_1step} hold. Assume, moreover, that
    \begin{equation}
      d_0<\min(u_0,D_0), \quad \text{and} \quad \max(u_0,D_0)<U_0. 
    \end{equation}
\end{mass}

Assumption~\ref{addass_ud} has the following consequences. First,  there is real uncertainty in the $u$ as well as the $d$ parameter as $d_0<D_0$ and $u_0<U_0$. Second, the smallest possible value of $d$
is strictly less than the smallest possible value of $u$, and the largest possible value of $d$
is strictly less than the largest possible value of $u$.  For readers with a particular interest in mathematical finance we will comment on no arbitrage considerations in Section~\ref{binomNA} and will see there that this assumption does not restrict the generality in this aspect.

The next result shows that the set $\rrr$ obviously does not have disjoint supports.  As a consequence we have to modify the set to find a strictly disjointly supported alternative, which is then given in Definition~\ref{disjoint_supp} below.

\begin{lemma}\label{not_disjoint}
Under Assumption~\ref{addass_ud} on $E_0$, $\mathcal{R}$ is not a disjointly supported alternative. 
\end{lemma}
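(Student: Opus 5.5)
We prove the statement by exhibiting two measures in $\mathcal{R}$ that are distinct but whose support sets, as given by \eqref{support_R}, intersect in a set that is not $\pp$-polar. Then $\mathcal{R}$ has neither strictly disjoint nor $\pp$-q.s.\ disjoint supports. Since supports of a supported measure are determined up to $\pp$-polars, no other choice of supports can repair this.

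\textbf{Choosing the two measures.} Assumption~\ref{addass_ud} gives $d_0<D_0$ and $u_0<U_0$, so there is real freedom in both the down and the up parameter. Fix any admissible $u\in[u_0,U_0]$ and $\pi\in[\pi_0,\Pi_0]$. Let $R_1$ have parameters $(u,d_0,\pi)$ and $R_2$ have parameters $(u,D_0,\pi)$; both lie in $E_0$, so $R_1,R_2\in\mathcal{R}$. If needed we also use that $d_0<u$ (from $d_0<u_0$), so that each $R_i$ is genuinely two-point and $S_{R_i}=\{u,d_i\}$ with $R_i(\{u\})=\pi>0$. The measures differ because $R_1(\{d_0\})=1-\pi>0$ while $R_2(\{d_0\})=0$, using $d_0\neq D_0$ and $d_0\neq u$.

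\textbf{The supports overlap in a non-polar set.} By \eqref{support_R} with $Y_1=\mathrm{id}$ we have $S_{R_1}\cap S_{R_2}\supseteq\{u\}$, and $R_1(\{u\})=\pi\ge\pi_0>0$. Since $R_1\in\mathcal{R}\subseteq\pp$, the set $\{u\}$ is not $\pp$-polar. Hence the supports of $R_1$ and $R_2$ are not $\pp$-q.s.\ disjoint, which is what ``disjointly supported alternative'' requires.

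\textbf{Main obstacle: independence from the choice of supports.} It remains to rule out that some other choice of support sets could be disjoint. Suppose $T_1,T_2$ are any supports of $R_1,R_2$ in the sense of Definition~\ref{def:supported-measures}. Then $R_1(T_1^{\mathsf c})=0$, and $R_1(\{u\})>0$ forces $u\in T_1$; likewise $R_2(\{u\})>0$ forces $u\in T_2$. Thus $u\in T_1\cap T_2$, which is not $\pp$-polar, for every admissible choice of supports. This is the key point. Once it is established, the lemma follows, and it motivates the modified family of Definition~\ref{disjoint_supp}.
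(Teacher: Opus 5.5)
Your proof is correct. Its skeleton matches the paper's: you exhibit two distinct elements of $\mathcal{R}$ whose supports \eqref{support_R} share a point carrying positive mass, so their intersection is not $\mathcal{P}$-polar. The difference lies in the pair you choose.

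The paper uses a ``crossing'' pair, where a point $a\in[u_0,D_0]$ is the down value of $R_1$ and the up value of $R_2$. This requires $u_0\le D_0$, which is only one of the two configurations allowed by Assumption~\ref{addass_ud}. The case $D_0<u_0$ is covered only by the paper's remark that other overlapping combinations exist.

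Your pair shares the up value $u$ and differs only in the down value. It therefore uses nothing beyond $d_0<D_0$ and $d_0<u_0$, and it handles both configurations uniformly. Your last paragraph also shows explicitly that the conclusion does not depend on the choice of supports, since every support must contain the atom $u$. The paper takes this for granted by working with the fixed sets \eqref{support_R}.

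What the paper's example buys is motivation for Definition~\ref{def_disj_supp}. The crossing overlap is what forces the Dirac family $\widetilde{\mathcal{R}}_1$ and the disjoint ranges of $d$ and $f(d)$. Your shared-coordinate overlap, by contrast, is already removed by the injectivity of $f$.

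There is one small slip. If $u=D_0$ (possible when $u_0\le D_0$), then $R_2=\delta_u$, so $R_2$ is not two-point and $R_2(\{u\})=1$ rather than $\pi$. This is harmless because only positivity is used, or you can simply take $u=U_0>D_0$.
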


\begin{remark}
    Note that this also holds in the general multiperiod case with the supports given as in Appendix~D of \cite{liebrich-model-uncertainty:reverse-approach}. Observe that the supports given in \eqref{support_R} above are exactly these supports in the case of $T=1$. Hence, the fact that the supports, in general,  are not disjoint follows already from the one period case.
\end{remark}

\begin{proof}
 We will choose $R_1$, $R_2$ in $\rrr$ such that $R_1\ne R_2$. Let $(u_1,d_1,\pi_1), (u_2,d_2,\pi_2)\in E_0$, defining $R_1$, $R_2$, respectively.  Recall from Remark~\ref{rem_supp} that $S_{R_i}=\{Y_1=u_i\}\cup\{Y_1=d_i\}$, for $i=1,2$.
  The choice of $\pi_i$, $i=1,2$, is obviously irrelevant for the supports. Even leaving $\pi_i$ aside, under Assumption~\ref{addass_ud} there are several  cases admissible where $\{d_1,u_1\}\cap\{d_2,u_2\}\ne\varnothing$ but still $\{d_1,u_1\}\ne\{d_2,u_2\}$. For example, fix $d_1=a$ with $d_0<1<u_0\leq a\leq  D_0<U_0$, $u_1\in(a, U_0]\subset [u_0,U_0]$. As $u_0\leq a<U_0$ we can choose  $u_2=a$. Choose $d_2\in[d_0,a)$. Then $d_1=u_2=a$ and $d_2<a$, $u_1>a$. Hence, $\{u_1,d_1\}\ne\{u_2,d_2\}$ therefore $R_1\ne R_2$, and since $\{u_1,d_1\}\cap\{u_2,d_2\}=\{a\}$, we get
 $$S_{R_1}\cap S_{R_2}= \{Y_1=a\}\ne\varnothing.$$
 Moreover, observe that $R_1(S_{R_2})\geq R_1(Y_1=d_1=a)= 1-\pi_1>0$ and $R_2(S_{R_1})\geq R_2(Y_1=u_2=a)=\pi_2>0$. Hence, the disjointness property does not even hold in a q.s. way. There are various other possible combinations to  get further non-disjoint supports.
\end{proof}

Our aim is now to find a supported alternative $\widetilde{\rrr}$ with disjoint supports.  We have to represent the supports in a way that makes $\{d,u\}$ unique. We do this by defining a helpful parametrizing function $f$ and the observation that for  certain choices of the set $E_0$, i.e., $u_0\leq D_0$, we have that measures of the form $R\circ Y_1^{-1}=\delta_{a}$, $a\in [u_0,D_0]$, satisfy $R\in\rrr$. 

\begin{mdef}\label{def_disj_supp} Let Assumption~\ref{addass_ud} hold. Let $m_0=\min(u_0,D_0)$ and $M_0=\max(u_0,D_0)$.
  Let $f:[d_0,m_0]\to[M_0,U_0]$ be a continuous strictly decreasing function with $f(d_0)=U_0$ and $f(m_0)=M_0$. Let $\widetilde{\pi}=\frac{\pi_0+\Pi_0}{2}$. Define 
 $$\widetilde{\rrr}_0=\{R\in\rrr \mid R\circ Y_1^{-1}=\widetilde{\pi}\delta_{f(d)}+(1-\widetilde{\pi})\delta_{d}, d\in[d_0,m_0)\}.$$  Define further
 \begin{align*}\widetilde{\rrr}_1 &=\{R\in\rrr \mid R\circ Y_1^{-1}=\delta_{a}, a\in[d_0,D_0]\cap[u_0,U_0]\}\\
 \widetilde{\rrr}_2 &=\{R\in\rrr \mid R\circ Y_1^{-1}=\widetilde{\pi}\delta_{f(d)}+(1-\widetilde{\pi})\delta_{d}, d\in[m_0,D_0]\cap[D_0,M_0)\}.\end{align*}
 The set $\widetilde{\rrr}$ is now given as
 \begin{equation}
     \widetilde{\rrr}=\widetilde{\rrr}_{0}\cup\widetilde{\rrr}_1\cup\widetilde{\rrr}_2.\label{rrr}
 \end{equation}
\end{mdef}

Observe that the choice $\widetilde{\pi}$ for the probability is just an arbitrary fixed choice in $[\pi_0,\Pi_0]$ as the support sets do not depend on the concrete choice of probability $\pi$ (though, note that $0<\widetilde{\pi}<1$ by Assumption \ref{ass_blan_car_1step}). 
Obviously we can choose $f$ as a strictly decreasing line, i.e., $$f(x)=-\frac{U_0-M_0}{m_0-d_0}(x-d_0)+U_0.$$

\begin{theorem}\label{disjoint_supp} Let Assumption~\ref{addass_ud} hold. For all $R_1,R_2\in\widetilde{\rrr}$ of \eqref{rrr} in Definition~\ref{def_disj_supp} with $R_1\ne R_2$ we have that
  $S_{R_1}\cap S_{R_2}=\varnothing$ . Moreover, $\widetilde{\rrr}\subset\rrr$ and it has the same polar sets as $\rrr$. Thus  $\widetilde{\rrr}$ is a supported alternative with strictly disjoint support sets.   
\end{theorem}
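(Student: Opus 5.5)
The plan is to verify the three assertions of Theorem~\ref{disjoint_supp} directly from the explicit form of the supports in \eqref{support_R}. With $Y_1$ the identity, the support of $R\in\rrr$ with parameters $(u_R,d_R,\pi_R)$ is the finite set $\{u_R,d_R\}\subset(0,\infty)$. Every $R\in\rrr$ is a two-point (or one-point) measure, so a Borel set $N$ is $\rrr$-polar if and only if $N$ avoids the set of all possible atoms
\begin{equation*}
  A=\bigcup_{R\in\rrr}\{u_R,d_R\}=[d_0,D_0]\cup[u_0,U_0].
\end{equation*}
The whole proof therefore reduces to bookkeeping of atoms.

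\emph{Step 1: $\widetilde{\rrr}\subset\rrr$.} For $R\in\widetilde{\rrr}_0$ the law of $Y_1$ is $\widetilde\pi\delta_{f(d)}+(1-\widetilde\pi)\delta_d$. Here $d\in[d_0,m_0)\subseteq[d_0,D_0]$ and $f(d)\in(M_0,U_0]\subseteq[u_0,U_0]$, since $f$ is strictly decreasing. Moreover $\widetilde\pi\in[\pi_0,\Pi_0]$, so $(f(d),d,\widetilde\pi)\in E_0$. For $R\in\widetilde{\rrr}_1$ we use the triple $(a,a,\pi)$, which lies in $E_0$ because $a\in[d_0,D_0]\cap[u_0,U_0]$. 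The elements of $\widetilde{\rrr}_2$ are handled like $\widetilde{\rrr}_0$. I read this piece as the boundary case $d=m_0$, $f(m_0)=M_0$, which is needed when $D_0<u_0$, so that $[d_0,D_0]\cap[u_0,U_0]=\varnothing$. Each $R\in\widetilde{\rrr}$ is then supported with support $\{u_R,d_R\}$ by Remark~\ref{rem_supp}, since supportedness is a property of $R$ and of the polar sets, and these coincide (Step 3).

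\emph{Step 2: strict disjointness.} I split $(0,\infty)$ into three zones: the low zone $[d_0,m_0)$, the middle zone $[m_0,M_0]$ and the high zone $(M_0,U_0]$.
\begin{itemize}
  \item Measures in $\widetilde{\rrr}_0$ have one atom in the low zone and one in the high zone. Two such measures with parameters $d\neq d'$ have disjoint supports: $d\neq d'$, $f(d)\neq f(d')$ by strict monotonicity, and low atoms never equal high atoms because $d<m_0\le M_0<f(d')$.
  \item Measures in $\widetilde{\rrr}_1$ have a single atom $a\in[u_0,D_0]\subseteq[m_0,M_0]$. Distinct such measures have distinct atoms.
  \item The measure in $\widetilde{\rrr}_2$ has atoms $m_0,M_0$, which occur only when $m_0=D_0<u_0=M_0$, and then $\widetilde{\rrr}_1=\varnothing$.
\end{itemize}
Hence supports from different pieces lie in different zones, and $S_{R_1}\cap S_{R_2}=\varnothing$ whenever $R_1\neq R_2$.

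\emph{Step 3: equal polar sets.} The inclusion $\widetilde{\rrr}\subset\rrr$ gives $\mathcal N_{\rrr}\subseteq\mathcal N_{\widetilde{\rrr}}$. For the converse it suffices to show that the atoms of $\widetilde{\rrr}$ exhaust $A$. I distinguish two cases.
\begin{itemize}
  \item If $u_0\le D_0$, the atoms are $[d_0,u_0)\cup[u_0,D_0]\cup(D_0,U_0]=[d_0,U_0]=A$. Here surjectivity of $f$ onto $(M_0,U_0]$ follows from continuity and strict monotonicity via the intermediate value theorem.
  \item If $D_0<u_0$, the atoms are $[d_0,D_0)\cup\{D_0,u_0\}\cup(u_0,U_0]=A$.
\end{itemize}
Consequently a Borel set charged by some $R\in\rrr$ contains a point of $A$, which is an atom of some $\widetilde R\in\widetilde{\rrr}$, so $\widetilde R$ charges it as well. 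Together with Step 1 this shows that $\widetilde{\rrr}$ is a supported alternative with strictly disjoint supports.

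The main obstacle is the case analysis in Step 3, in particular making sure that the endpoints $m_0$ and $M_0$ are actually covered. This is exactly where the definition of $\widetilde{\rrr}_2$ (whose index set is written somewhat awkwardly) must be interpreted correctly.
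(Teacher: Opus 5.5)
Your proof is correct and follows essentially the same route as the paper: disjointness by directly comparing the atoms $d\in[d_0,m_0)$, $f(d)\in(M_0,U_0]$ and $a\in[u_0,D_0]$ (your three zones simply merge the paper's two cases $D_0<u_0$ and $D_0\ge u_0$), and equality of polar sets by covering the supports of elements of $\rrr$ with supports of elements of $\widetilde{\rrr}$. The only difference is cosmetic. You cover the whole atom set $[d_0,D_0]\cup[u_0,U_0]$ at once and use that every atom carries positive mass (since $0<\widetilde\pi<1$), whereas the paper covers each $S_Q$ by some $S_{Q_1}\cup S_{Q_2}$ through a four-subcase analysis, invokes supportedness of $Q_1,Q_2$, and records the result as Corollary~\ref{abscontilde}; your covering gives that corollary immediately as well.
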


We will first characterize how the three subsets of $\widetilde{\rrr}$ look for different possibilities of the form of $E_0$.

\begin{lemma} \label{char_rrr}
If $D_0<u_0$ then $\widetilde{\rrr}_1=\varnothing$ and $\widetilde{\rrr}_2=\{R_0\}$, where $R_0\circ Y_1^{-1}=\widetilde{\pi}\delta_{f(D_0)}+(1-\widetilde{\pi})\delta_{D_0}$. As a consequence, for $D_0<u_0$,
\begin{equation}\widetilde{\rrr}=\{R\in\rrr\mid R\circ Y_1^{-1}=\widetilde{\pi}\delta_{f(d)}+(1-\widetilde{\pi})\delta_{d}, d\in[d_0,D_0]\}.\label{D_0<u_0}\end{equation}

If $D_0\geq u_0$, then $\widetilde{\rrr}_1=\{R\in\rrr\mid R\circ Y_1^{-1}=\delta_{a}, a\in[u_0,D_0]\}\subset\widetilde{\rrr}$ and  $\widetilde{\rrr}_2=\varnothing$.
 \end{lemma}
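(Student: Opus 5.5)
The claim is an elementary case distinction on the relative position of $u_0$ and $D_0$. Throughout I use $m_0=\min(u_0,D_0)$, $M_0=\max(u_0,D_0)$ from Definition~\ref{def_disj_supp}, together with Assumption~\ref{addass_ud}, which gives $d_0<m_0\le M_0<U_0$. The point is that $\widetilde{\rrr}_1$ and $\widetilde{\rrr}_2$ are indexed by intervals, so I only need to decide when these intervals are empty, a single point, or a genuine interval. I also need to check that the defining laws really belong to $\mathcal{L}_1$, so that the measures lie in $\rrr$.

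\emph{Case $D_0<u_0$.} Here $m_0=D_0$ and $M_0=u_0$.

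The index set of $\widetilde{\rrr}_1$ is $[d_0,D_0]\cap[u_0,U_0]$. Since $D_0<u_0$, this is empty, so $\widetilde{\rrr}_1=\varnothing$.

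The index set of $\widetilde{\rrr}_2$ is $[m_0,D_0]\cap[D_0,M_0)=[D_0,D_0]\cap[D_0,u_0)=\{D_0\}$, because $D_0<u_0$. So $\widetilde{\rrr}_2$ consists of those $R\in\rrr$ with $R\circ Y_1^{-1}=\widetilde{\pi}\delta_{f(D_0)}+(1-\widetilde{\pi})\delta_{D_0}$.

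I check this law lies in $\mathcal{L}_1$. We have $f(D_0)=f(m_0)=M_0=u_0\in[u_0,U_0]$, $D_0\in[d_0,D_0]$, and $\widetilde{\pi}\in[\pi_0,\Pi_0]$.

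One subtlety is that the statement writes $\widetilde{\rrr}_2=\{R_0\}$, although $\rrr$ consists of measures on $\Omega$. Since $Y_1$ is a bijection (the identity), a measure is determined by its law under $Y_1$. Hence prescribing $R\circ Y_1^{-1}$ pins down $R$ uniquely, and $\widetilde{\rrr}_2$ really is a singleton. The same remark makes each $\widetilde{\rrr}_i$ in bijection with its parameter set.

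Combining, $\widetilde{\rrr}=\widetilde{\rrr}_0\cup\widetilde{\rrr}_2$. The index set of $\widetilde{\rrr}_0$ is $[d_0,D_0)$, and adding the point $D_0$ gives $[d_0,D_0]$, which yields \eqref{D_0<u_0}. Each law in $\widetilde{\rrr}_0$ also lies in $\mathcal{L}_1$: for $d\in[d_0,D_0)$, monotonicity of $f$ gives $f(d)\in(M_0,U_0]\subseteq[u_0,U_0]$.

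\emph{Case $D_0\ge u_0$.} Here $m_0=u_0$ and $M_0=D_0$.

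The index set of $\widetilde{\rrr}_2$ is $[u_0,D_0]\cap[D_0,D_0)$. The second factor is the half-open interval $[D_0,D_0)=\varnothing$, so $\widetilde{\rrr}_2=\varnothing$.

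The index set of $\widetilde{\rrr}_1$ is $[d_0,D_0]\cap[u_0,U_0]$. Using $d_0<u_0$ and $D_0<U_0$, this equals $[u_0,D_0]$, which is nonempty. For $a$ in this interval, $\delta_a=\pi\delta_a+(1-\pi)\delta_a$ with $(u,d,\pi)=(a,a,\widetilde{\pi})\in E_0$, since $a\in[u_0,U_0]\cap[d_0,D_0]$. So these degenerate laws are admissible and the corresponding measures lie in $\rrr$. This gives the stated description of $\widetilde{\rrr}_1$ and the inclusion $\widetilde{\rrr}_1\subset\widetilde{\rrr}$.

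There is no real obstacle. The only points needing care are the half-open interval producing the empty set (in contrast to the singleton in the first case), and justifying membership in $\rrr$ via $f$'s range and the degenerate binomial laws.
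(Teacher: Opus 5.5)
Your proposal is correct and follows the paper's proof essentially line by line. It uses the same case split on $D_0<u_0$ versus $D_0\ge u_0$, identifies the index sets of $\widetilde{\rrr}_1$ and $\widetilde{\rrr}_2$ as empty, the singleton $\{D_0\}$, or $[u_0,D_0]$, and uses the degenerate laws $\delta_a=\pi\delta_a+(1-\pi)\delta_a$ to show that the measures in $\widetilde{\rrr}_1$ lie in $\rrr$. Your extra checks, that $f(D_0)=f(m_0)=u_0$ so that $R_0\in\rrr$, and that prescribing the law under $Y_1$ pins down the measure, are details the paper leaves implicit.
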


\begin{proof} Let $D_0<u_0$. 
Then $[d_0,D_0]\cap[u_0,U_0]=\varnothing$ and hence, $\widetilde{\rrr}_1=\varnothing$. Moreover,  $m_0=D_0$ and $M_0=u_0$, hence $[m_0,D_0]=\{D_0\}$ and $[D_0,M_0)=[D_0,u_0)$,  therefore
$[m_0,D_0]\cap [D_0,M_0)=\{D_0\}$ and  $\widetilde{\rrr}_2=\{R_0\}$. In the definition of the set $\widetilde{\rrr}_0$ only $d\in[d_0,m_0)$ appear, but $m_0=D_0$, hence the measure $R_0$ now adds the right boundary of the interval $[d_0,m_0]=[d_0,D_0]$, and $\widetilde{\rrr}$ is given by \eqref{D_0<u_0}.

Let now $D_0\geq u_0$. Then $m_0=u_0$, $M_0=D_0$. Thus, in this case $[D_0, M_0)=[D_0,D_0)=\varnothing$ and hence $\widetilde{\rrr}_2=\varnothing$. Concerning $\widetilde{\rrr}_1$, note that by Assumption~\ref{addass_ud} it holds that $d_0<u_0\leq D_0<U_0$, hence
$[d_0,D_0]\cap[u_0,U_0]=[u_0,D_0]\neq\varnothing$. 
Let us now show that for all $a\in[u_0,D_0]$ we have that $R_a\in\rrr$ where  $R_a\circ Y_1^{-1}=\delta_{a}$. Indeed, let $\pi\in[\pi_0,\Pi_0]$ arbitrary. As $a\in[d_0,D_0]\cap[u_0,U_0]$ we can choose $a=d_R=u_R$ and observe that $R_a= R\in\rrr$ with $(u_R, d_R, \pi)\in E_0$ where $R$ is given by $R\circ Y_1^{-1}=\pi \delta_{u_R}+(1-\pi)\delta_{d_R}=\delta_{a}$.
\end{proof}
\begin{proof}[Proof (Theorem~\ref{disjoint_supp})]
 By definition and by Lemma~\ref{char_rrr} it is obvious that
$\widetilde{\rrr}\subset \rrr$.  Let us first show that the supports of measures in $\widetilde{\rrr}$ are disjoint.
\newline
\emph{Case~1:} $D_0<u_0$.
By Lemma~\ref{char_rrr} we know that $\widetilde{\rrr}=\{R\in\rrr \mid R\circ Y_1^{-1}=\widetilde{\pi}\delta_{f(d)}+(1-\widetilde{\pi})\delta_{d}, d\in[d_0,D_0]\}$. For each $d\in[d_0,D_0]$ we have that $f(d)=u\in[u_0,U_0]$. Suppose $R_1\ne R_2$ with $R_1,R_2\in \widetilde{\rrr}.$ Let $\{d_i,u_i\}$ with $d_i\in[d_0,D_0]$ and $u_i=f(d_i)$ be the parameters of $R_i$, $i=1,2$, respectively. As $f$ is bijective we have that $d_1=d_2$ if and only if $u_1=u_2$. By assumption, $\{d_1,u_1\}\neq\{d_2,u_2\}$ as $R_1\neq R_2$, hence $d_1\neq d_2$ and $u_1\neq u_2$ has to hold. Moreover $d_i\neq u_j$, for $i\neq j$ with $i,j\in\{1,2\}$. Indeed $u_i=f(d_i)\in[u_0,U_0]$, $i=1,2$, and $d_i\in[d_0,D_0]$ and, as $D_0<u_0$, obviously
 $[u_0,U_0]\cap [d_0,D_0]=\varnothing$. Hence, whenever $R_1\neq  R_2$ it follows that $\{d_1,u_1\}\cap\{d_2,u_2\}=\varnothing$. Therefore $S_{R_1}\cap S_{R_2}=\varnothing$ as, by Remark~\ref{rem_supp}, $S_{R_i}=\{Y_1=u_i\}\cup\{Y_1=d_i\}$, for $i=1,2$. 
\newline
\emph{ Case~2:} $D_0\geq u_0$. By Lemma~\ref{char_rrr} we know that $\widetilde{\rrr}=\widetilde{\rrr}_0\cup \widetilde{\rrr}_1$.

For $R\in \widetilde{\rrr}_0$, we have that $d_0\leq d<m_0\leq D_0$ and $u_0\leq M_0< f(d)=u\leq U_0$.
By an analogous argument as in Case~1 we see that for measures $R_1,R_2\in \widetilde{\rrr}_0$, we have that $S_{Q_1}\cap S_{Q_2}=\varnothing$. 
Indeed, in Case~2, $m_0=u_0$, $M_0=D_0$, and hence $f: [d_0,u_0]\to[D_0,U_0]$. 
Let now $R_1\ne R_2\in \widetilde{\rrr}_0$ for $d_1, d_2\in [d_0, u_0)$. 
Then, as  $f$ is bijective, we have that $d_1\neq d_2$ and $u_1=f(d_1)\neq f(d_2)=u_2$ as before. 
Assume, e.g., $d_1=u_2=f(d_2)=:a$ would hold. We have $a=d_1\in[d_0,u_0)$  and $a=f(d_2)\in(D_0,U_0]$ by the definition of $\widetilde{\rrr}_0$. 
This is not possible as in Case~2 $[d_0,u_0)\cap (D_0,U_0]=\varnothing$. Therefore  $\{d_1,u_1\}\cap\{d_2,u_2\}=\varnothing$. and thus $S_{R_1}\cap S_{R_2}=\varnothing$. 

Consider now two measures $R_1\neq R_2$ with $R_1, R_2\in \widetilde{\rrr}_1$.
It is obvious that the support of $R_i$ with  
 $R_i\circ Y_1 ^{-1}=\delta_{a_i}$, $a_i\in[d_0,D_0]\cap[u_0,U_0]=[u_0,D_0]$,  is of the form $S_{R_i}=\{Y_1={a_i}\}$. Hence, it has to hold that $a_1\neq a_2$ and so $S_{R_1}\cap S_{R_2}=\varnothing$. 
 
 It remains to show that $R_1\in\widetilde{\rrr}_0$ and $R_2\in \widetilde{\rrr}_1$  have disjoint supports. Let $\{d,u\}$, with $d\in[d_0,u_0)$ and $u=f(d)$, be the parameters defining $R_1$ and $a\in[u_0,D_0]$ be the parameter defining $R_2$. Obviously $d\neq a$ as $d<u_0$ and $a\geq u_0$. By definition $u=f(d)\in (D_0, U_0]$, hence $u\neq a$. Therefore $S_{R_1}\cap S_{R_2}=\varnothing$. Hence, we have seen that $\widetilde{\rrr}$ is indeed disjointly supported.
\newline\newline
Now, we will prove that $\widetilde{\rrr}$  and $\rrr$ have the same polar sets. Indeed, as $\widetilde{\rrr}\subset\rrr$ it is obvious that for $A\in\mathcal{F}$ with $\sup_{R\in\rrr}R(A)=0$ it follows that $\sup_{R\in\widetilde{\rrr}}R(A)=0$. Suppose now that there exists $Q\in\rrr\setminus\widetilde{\rrr}$ and $A\in\mathcal{F}$ with $\sup_{R\in\widetilde{\rrr}}R(A)=0$ but 
 \begin{equation}\label{badsetmeas} Q(A)>0.\end{equation} 
 Let $S_Q=\{Y_1=u_Q\}\cup\{Y_1=d_Q\}$ with $d_Q\in[d_0,D_0]$ and $u_Q\in[u_0,U_0]$. 
 
 Assume first $D_0<u_0$. 
 Then $u_Q\neq f(d_Q)$, because otherwise $Q\in\widetilde{\rrr}$ which was excluded. 
 Define ${Q}_1, {Q}_2\in\widetilde{\rrr}$ with the corresponding parameters as follows: $d_1=d_Q$, $u_1=f(d_Q)$, $u_2=u_Q$, $d_2=f^{-1}(u_Q)$. By \eqref{D_0<u_0} of Lemma~\ref{char_rrr}, $Q_1, Q_2\in\widetilde{\rrr}$. 
 Moreover, it holds that
 \begin{equation}S_Q=\{Y_1=d_Q\}\cup\{Y_1=u_Q\}\subseteq S_{Q_1}\cup S_{Q_2}\label{bigger_supp}\end{equation}
 As $Q_1, Q_2\in\widetilde{\rrr}$, by assumption $Q_i(A)=0$, $i=1,2$. But then, obviously, $Q_i(A)=Q_i(A\cap S_{Q_i})=0$, for $i=1,2$. 
 Recall that $\widetilde{\rrr}\subset{\rrr}$ and each $R\in\rrr$ is supported, therefore $Q_i$, $i=1,2$ are supported. 
 Therefore,  $\sup_{R\in\rrr}R(A\cap S_{Q_i})=0$, for $i=1,2$. 
 From this together with \eqref{bigger_supp} it follows that
 $$Q(A)=Q(A\cap S_Q)\leq Q(A\cap S_{Q_1})+Q(A\cap S_{Q_2})=0.$$ 
 Thus we get a contradiction to \eqref{badsetmeas} and $A$ is a polar set for $\rrr$ in the case $D_0<u_0$.

 Assume now that $D_0\geq u_0$. 
 We distinguish four cases.
 \begin{enumerate}
   \item[(i)] Assume first that $d_Q\in[d_0,u_0)$ and $u_Q\in(D_0,U_0]$. It holds that $u_Q\neq f(d_Q)$ (and thus $d_Q\neq f^{-1}(u_Q)$) because otherwise $Q\in\widetilde{\rrr}_0\subset\widetilde{\rrr}$ which was excluded. Similarily as before choose $Q_1, Q_2\in \widetilde{\rrr}_0$ with $d_1=d_Q$, $u_1=f(d_Q)$, $d_2=f^{-1}(u_Q)$, $u_2=u_Q$. In this case $S_Q\subset S_{Q_1}\cup S_{Q_2}$ and as above this implies that $Q(A)=0$. 
   \item[(ii)]  Assume that $d_Q\in[u_0,D_0]$ and  $u_Q\in(D_0,U_0]$. 
     Then choose $Q_1\in \widetilde{\rrr}_1$ with $a=d_Q$. Choose  $Q_2$ as in (i). 
     Again, we get $S_Q\subset S_{Q_1}\cup S_{Q_2}$.
     As $Q_1, Q_2\in \widetilde{\rrr}\subset\rrr$ and therefore are supported, this implies again that $Q(A)=0$.
   \item[(iii)]  Assume that $d\in[d_0,u_0)$ and  $u_Q\in[u_0,D_0]$. Choose $Q_1$ as in (i) and choose $Q_2\in \widetilde{\rrr}_1$ with $a=u_Q$. Then $S_Q\subset S_{Q_1}\cup S_{Q_2}$ and as $Q_1, Q_2\in \widetilde{\rrr}\subset\rrr$, they are supported, which implies again that $Q(A)=0$.
   \item[(iv)] Finally assume that $d_Q, u_Q\in [u_0,D_0]$. Choose $Q_1$ as in (ii) and $Q_2$ as in (iii).  $u_Q=d_Q$ is not possible because otherwise $Q\in\widetilde{\rrr}_1\subset{\widetilde{\rrr}}$ which was excluded, hence $Q_1\neq Q_2$. Moreover, then $S_Q= S_{Q_1}\cup S_{Q_2}$.  Again, as $Q_1, Q_2\in \widetilde{\rrr}\subset\rrr$   are supported this implies  that $Q(A)=0$ as above.
 \end{enumerate}
   Hence, we get a contradiction to \eqref{badsetmeas} in all four cases and $A$ is a polar set for $\rrr$ in the case $D_0\geq u_0$.
 \end{proof}

A carefully reading of the proof of Theorem~\ref{disjoint_supp} implies the following corollary.

\begin{corollary}\label{abscontilde}
For each  $Q\in\rrr\setminus\widetilde{\rrr}$ there exist $Q_1, Q_2\in\widetilde{\rrr}$ such that  $S_Q\subseteq S_{Q_1}\cup S_{Q_2}$. 
In particular, this implies that $Q\ll\frac{Q_1+Q_2}{2}$. 
\end{corollary}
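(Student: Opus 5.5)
The first claim comes straight out of the proof of Theorem~\ref{disjoint_supp}. There, for every $Q\in\rrr\setminus\widetilde{\rrr}$ with parameters $(u_Q,d_Q)$, we built measures $Q_1,Q_2\in\widetilde{\rrr}$ satisfying $S_Q\subseteq S_{Q_1}\cup S_{Q_2}$. In the case $D_0<u_0$ these were $Q_1$ with parameters $(f(d_Q),d_Q)$ and $Q_2$ with parameters $(u_Q,f^{-1}(u_Q))$, and the inclusion is \eqref{bigger_supp}. In the case $D_0\geq u_0$ there were four subcases (i)--(iv), split by where $d_Q$ and $u_Q$ lie relative to $u_0$ and $D_0$. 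In each subcase either $Q_1$ or $Q_2$ is taken from $\widetilde{\rrr}_0$ through $f$ or $f^{-1}$, or from $\widetilde{\rrr}_1$ as a Dirac law at $d_Q$ or $u_Q$. I would collect these choices and verify each one: the point $d_Q$ must lie in the support of one of the two measures and $u_Q$ in the support of the other. This follows from Remark~\ref{rem_supp}, $S_R=\{Y_1=u_R\}\cup\{Y_1=d_R\}$. One case needs a separate remark. In subcase (i), $f(d_Q)=u_Q$ is excluded, but $Q_1$ and $Q_2$ may still coincide. This does no harm, since the inclusion still holds.

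For the absolute continuity, let $A\in\f$ with $\frac{Q_1+Q_2}{2}(A)=0$. Then $Q_1(A\cap S_{Q_1})=0$ and $Q_2(A\cap S_{Q_2})=0$. Each $Q_i\in\widetilde{\rrr}\subset\rrr$ is supported with support $S_{Q_i}$ (Remark~\ref{rem_supp}, via Proposition~3.10 of \cite{liebrich-model-uncertainty:reverse-approach}). By Definition~\ref{def:supported-measures}, $A\cap S_{Q_i}$ is therefore polar, in particular for $Q\in\rrr$. Here the polars are those of $\pp$, and these coincide with the polars of $\rrr$ because $\pp=\co(\rrr)$. Hence
\[
Q(A)=Q(A\cap S_Q)\le Q(A\cap S_{Q_1})+Q(A\cap S_{Q_2})=0,
\]
so $Q\ll \frac{Q_1+Q_2}{2}$.

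The step that needs care is the case bookkeeping. Subcases (ii)--(iv) as written in the proof of the theorem refer to choices made in other subcases, and in (iii) the condition is stated for $d$ rather than $d_Q$. So I would restate each pair $(Q_1,Q_2)$ explicitly and check that it lies in $\widetilde{\rrr}$. Membership requires $d\in[d_0,u_0)$ for $\widetilde{\rrr}_0$, with $f^{-1}(u_Q)\in[d_0,u_0)$ when $u_Q\in(D_0,U_0]$, and $a\in[u_0,D_0]$ for $\widetilde{\rrr}_1$. Once these memberships are confirmed, the measure-theoretic part above is immediate.
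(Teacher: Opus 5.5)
Your proposal is correct and follows the paper's own argument: the corollary is read off from the pairs $Q_1,Q_2\in\widetilde{\rrr}$ built in the proof of Theorem~\ref{disjoint_supp} (the case $D_0<u_0$ and subcases (i)--(iv)), followed by the supportedness step $Q(A)\le Q(A\cap S_{Q_1})+Q(A\cap S_{Q_2})=0$. Your extra bookkeeping only makes that argument more careful. This includes allowing $Q_1=Q_2$, which does occur when $u_Q=f(d_Q)$ but $Q$'s weight differs from $\widetilde{\pi}$, so that $Q\notin\widetilde{\rrr}$ even though the paper's exclusion argument suggests otherwise.
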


Theorem \ref{disjoint_supp} together with Theorem \ref{thm:duality-wo-hahn-loc} allow us to construct a robust, one-step binomial model, which is Hahn-localizable. We follow the construction given in Definition~\ref{def_disj_supp}. 
Let $\Omega = (0,\infty)$, $\mathcal{F} = \mathcal{B}(\Omega)$ and $\widetilde{ \pp }$ as in \eqref{sigma_pp}. 
The model $(\Omega, \mathcal{F}, \widetilde{ \pp })$ is pre-Hahn-localizable, with strictly disjointly supported localization $\widetilde{ \mathcal{R} }$. 
Clearly, $\widetilde{ \mathcal{R} }$ satisfies Property~1 of Definition \ref{def:hahn-loc}. 
Moreover, since $\widetilde{ \mathcal{R}} \subseteq \mathcal{R} \subset\widetilde{ \mathcal{P}}$, we have $\widetilde{ \mathcal{R}} \lll \mathcal{R} \lll\widetilde{ \mathcal{P}}$.
Hence, we only need to show that $\mathcal{R}\lll \sconv(\widetilde{ \mathcal{R} })$. 
Fix $Q \in \mathcal{R}$. 
Then either $Q\in \widetilde{ \mathcal{R}}$ and there is nothing to do, or $Q\in\rrr\setminus\widetilde{ \mathcal{R}}$. 
From Corollary~\ref{abscontilde}, we know that there are ${ Q }_{1}, { Q }_{2}\in \widetilde{ \mathcal{R} }$ such that $Q \ll \frac{1}{2}( { Q }_{1} +{ Q }_{2})$.
Hence, as this implies $\mathcal{R}\lll \co(\widetilde{ \mathcal{R}})$ and as $\widetilde{\mathcal{P}}= \co_{\sigma}(\rrr)$, clearly $\widetilde{\mathcal{P}}\lll  \co_{\sigma}(\widetilde{ \mathcal{R}})$, and Property~2 of Definition~\ref{def:hahn-loc} is satisfied. 

Now, denote with $\widetilde{\mathcal{F}} = \mathcal{H}_{\mathcal{F}}^{\widetilde{ \mathcal{R} }}$ the Hahn-extension of $\mathcal{F}$ for the localization $\widetilde{ \mathcal{R} }$, and denote with $\widetilde{ \pp }_{1}$ the extension of $\widetilde{ \pp }$ to $\f$ (as in Section \ref{sec:hahn_extension}). 
This implies that $(\Omega, \widetilde{\mathcal{F}},\widetilde{ \pp }_{1})$ is Hahn-localizable, and $\LL(\widetilde{ \pp }_{1})$ thus satisfies the robust $L^{1}$-$L^{\infty}$ duality.  

\begin{corollary}\label{Hahnlocaliz_binom}
  Given Assumption \ref{addass_ud}, the robust one step binomial model based on $(\Omega, \widetilde{\mathcal{F}}, \widetilde{ \pp }_{1})$ is Hahn-localizable. 
\end{corollary}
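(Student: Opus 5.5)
The plan is to verify the hypotheses of Lemma \ref{lemma:hahn-extension-is-hahn-localizable} for the model $(\Omega,\mathcal{F},\widetilde{\pp})$ with localization $\widetilde{\mathcal{R}}$, and then read off Hahn-localizability of the extended model directly from that lemma.

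First, we need that $(\Omega,\mathcal{F},\widetilde{\pp})$ is pre-Hahn-localizable with a strictly disjointly supported localization. This is essentially the discussion preceding the corollary, and I would write it out in order.

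\begin{enumerate}
\item Property~1 of Definition \ref{def:hahn-loc}. By Theorem \ref{disjoint_supp}, the supports $S_R=\{Y_1=u_R\}\cup\{Y_1=d_R\}$ for $R\in\widetilde{\mathcal{R}}$ are pairwise disjoint Borel sets. Each $R$ is concentrated on $S_R$, so $R(S_{R'})=\delta_{RR'}$.
\item The relation $\widetilde{\mathcal{R}}\lll\widetilde{\pp}$ is trivial, since $\widetilde{\mathcal{R}}\subseteq\mathcal{R}\subseteq\widetilde{\pp}$.
\item For $\widetilde{\pp}\lll\sconv(\widetilde{\mathcal{R}})$, take $P=\sum_k\alpha_kR_k\in\widetilde{\pp}$. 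For each $R_k\notin\widetilde{\mathcal{R}}$, Corollary \ref{abscontilde} gives $R_k\ll\frac12(Q_1^k+Q_2^k)$ with $Q_1^k,Q_2^k\in\widetilde{\mathcal{R}}$. For $R_k\in\widetilde{\mathcal{R}}$, put $Q_1^k=Q_2^k=R_k$. Then $P\ll\sum_k\alpha_k\frac12(Q_1^k+Q_2^k)\in\sconv(\widetilde{\mathcal{R}})$, which is immediate from null sets.
\end{enumerate}

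Second, with strict disjointness established, we form $\widetilde{\mathcal{F}}=\mathcal{H}_{\mathcal{F}}^{\widetilde{\mathcal{R}}}$. Theorem \ref{thm:hahn-extension-is-well-defined} extends every $P\in\widetilde{\pp}$ to $P^{\widetilde{\mathcal{R}}}$ on $\widetilde{\mathcal{F}}$, and we let $\widetilde{\pp}_1$ be the family of these extensions. Lemma \ref{lemma:hahn-extension-is-hahn-localizable} then gives that $\widetilde{\pp}_1$ is Hahn-localizable on $(\Omega,\widetilde{\mathcal{F}})$, with localization $\{R^{\widetilde{\mathcal{R}}}\mid R\in\widetilde{\mathcal{R}}\}$ and the same supports $S_R$. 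This is exactly the claim.

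The only genuine content is the disjointness and polar-set equivalence in Theorem \ref{disjoint_supp}, together with the domination of Corollary \ref{abscontilde}, both of which are already established. The main point to watch is the $\sigma$-convex combination step in item~3. There I have to check that $\sum_k\alpha_k\frac12(Q_1^k+Q_2^k)$ is indeed an element of $\sconv(\widetilde{\mathcal{R}})$: this requires reindexing a countable family with possible repetitions, which is harmless because coefficients of repeated measures can be merged. I also need absolute continuity to pass through countable sums, which holds since a null set for the sum is null for every summand with positive weight.

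If a dual statement is desired as well, Theorem \ref{thm:duality-wo-hahn-loc} applied to the convex model (or Corollary \ref{cor:duality}) then yields $\mathrm{ca}(\mathcal{F},\widetilde{\pp})'\simeq\LL(\widetilde{\mathcal{F}},\widetilde{\pp}_1)$.
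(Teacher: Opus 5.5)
Your proposal is correct and follows the paper's own argument. You establish pre-Hahn-localizability of $(\Omega,\mathcal{F},\widetilde{\pp})$ with the strictly disjointly supported localization $\widetilde{\mathcal{R}}$ via Theorem~\ref{disjoint_supp} and Corollary~\ref{abscontilde}, and then apply Lemma~\ref{lemma:hahn-extension-is-hahn-localizable} to the Hahn-extension. Your explicit dominating measure $\sum_k \alpha_k\frac12(Q_1^k+Q_2^k)\in\sconv(\widetilde{\mathcal{R}})$ simply spells out the step the paper passes over with ``clearly''.
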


\subsection{A non-example and the dual of $\mathrm{ca}(\pp)$ in this case}
Finally, we give an example of a statistical model, which is not pre-Hahn-localizable. 
Let $\Omega = (0,1)$, $\mathcal{F} = \mathcal{B}(\Omega)$ the Borel $\sigma$-algebra on $\Omega$, and $\lambda$ the Lebesgue measure. 
We consider the set $\mathcal{P}$ of all probability measures $P$ on $\Omega$ of the form
\begin{equation*}
P= \frac{1}{2}\lambda + \frac{1}{2}\sum_{k=1}^{\infty} \alpha_{k}\delta_{x_{k}},
\end{equation*}
for some $x_{k}\in (0,1)$, and $\alpha_{k}\geq 0$ with $\sum_{k=1}^{\infty} \alpha_{k} = 1$. 
Assume that $\mathcal{P}$ is pre-Hahn-localizable, with localization $\mathcal{Q}$, and supports $\{S_{Q}\mid Q\in \mathcal{Q}\}$.
Recall that, by Definition~\ref{P-positiveSQ} and Remark~\ref{rem:R(Q)-at-most-countable}, for every $P\in \pp$, the set
 \begin{equation*}
   \mathcal{Q}(P) = \{ Q\in\mathcal{Q}\mid P(S_{Q})>0\}
 \end{equation*}
 is at most countable, and $P(\bigcup_{ Q\in \mathcal{Q}(P)} S_Q) = 1$. 
 As a consequence, at least one of the support sets $S_{Q}$ must be uncountable.
 We denote this support with $S$, and write $Q$ for the corresponding dominating  measure  in $\mathcal{Q}$ supported on $S$.
 In particular, $Q$ must dominate every $P$ of the form $P^{x} = \frac{1}{2}\lambda + \frac{1}{2} \delta_{x}$, for $x\in S$.
 This implies $Q(\{x\})>0$ for every $x\in S$.   Indeed, suppose there would exists $x\in S$ with $Q(\{x\})=0$. By Property~2 of Definition~\ref{def:hahn-loc} there exists $R\in \co_{\sigma}(\mathcal{Q})$  with $P^x\ll R$. 
   Note that for all $\tilde{Q}\in\mathcal{Q}$ with $\tilde{Q}\ne Q$ we have that $\tilde{Q}(\{x\})\leq \tilde{Q}(S)=0$ as $S$ is the support set of $Q$.
   This implies $Q(\{x\})>0$, for otherwise we would get $R(\{x\})=0$ and $P^x\not\ll R$ (as
 clearly $P^x(\{x\})>0$).
But then $Q(\{x\})>0$ for every $x\in S$, which is a contradiction to $Q(\Omega)<\infty$, since $S$ is uncountable. 
 However, our theory still provides insights in such cases. 

\begin{corollary}\label{cor:abs-cont+hahn}
Let $(\Omega, \f)$ be a measurable space, and $\pp$ a convex family of probability measures on $\f$. Assume that there is a $\sigma$-finite measure $\lambda$ on $\f$, and $\q\subseteq \pp$ such that
\begin{enumerate}
 \item there are pairwise disjoint sets $S_{Q}\in \f$ with $R(S_{Q}) = \delta_{RQ}$ for all $Q,R\in \q$,
 \item $\lambda$ is pairwise singular to any $Q\in \q$, and
\item every measure $P\in \pp$ can be written as
  \begin{equation*}
    P = P_{\lambda} + P_{\q},
  \end{equation*}
  where $P_{\lambda}\ll \lambda$ and $P_{\q}\ll \sum_{k=1}^{\infty}\lambda_{k} Q_{k}$ with $\lambda_{k}\geq 0$, $\sum_{k=1}^{\infty}\lambda_{k}=1$, and $Q_{k}\in \q$. 
\end{enumerate}
Then every $\mu \in \mathrm{ca}(\pp)$ can be written as $\mu  = \mu_{\lambda} + \mu_{\q}$ with $\mu_{\lambda}\ll \lambda$ and $\mu_{\q}\lll \sconv(\q)$, and every element $x'$ of the dual space of $\mathrm{ca}(\pp)$ is of the form
\begin{equation*}
  x'(\mu) = \int f_{\lambda}d \mu_{\lambda} + \int f_{\q} d \mu_{\q}^{\q},
\end{equation*}
where $f_{\lambda}\in L^{\infty}(\f,\lambda)$, $f_{\q}\in \mathbb{L}^{\infty}(\hef, \sconv(\q)^{\q})$.
\end{corollary}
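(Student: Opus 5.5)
The plan is to split $\mathrm{ca}(\pp)$ into a direct sum of two bands, one dominated by $\lambda$ and one localized along $\q$, and then identify the dual of each piece.

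\textbf{Decomposition.} Let $\mu\in\mathrm{ca}(\pp)$ with $|\mu|\ll P\in\pp$, and write $P=P_\lambda+P_\q$ as in assumption 3. Since $\lambda$ is singular to every $Q\in\q$, and $P_\q\ll\sum_k\lambda_kQ_k$, the measure $P_\q$ is singular to $\lambda$. Choose a set $N\in\f$ with $\lambda(N)=0$ and $Q_k(\compl{N})=0$ for all $k$; this is possible because we only need countably many null sets. Set $\mu_\lambda=\mu(\cdot\cap\compl{N})$ and $\mu_\q=\mu(\cdot\cap N)$. Then $\mu_\lambda\ll P_\lambda\ll\lambda$ and $\mu_\q\ll P_\q\ll\sum\lambda_kQ_k$, so $\mu_\q\lll\sconv(\q)$. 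This decomposition is the Lebesgue decomposition of $\mu$ with respect to $\lambda$, so it does not depend on the choice of $N$. Hence
\[
\mathrm{ca}(\pp)=X_\lambda\oplus X_\q,
\qquad
\|\mu\|=\|\mu_\lambda\|+\|\mu_\q\|,
\]
where $X_\lambda=\{\mu_\lambda\}\subseteq\mathrm{ca}(\lambda)$ and $X_\q\subseteq\mathrm{ca}(\sconv(\q))$. The norm identity holds because the two parts are mutually singular (Lemma \ref{lemma:TV-norm-of-singular-sum}). Consequently every $x'\in\mathrm{ca}(\pp)'$ is the sum $x'|_{X_\lambda}\circ\pi_\lambda+x'|_{X_\q}\circ\pi_\q$, with $\pi_\lambda,\pi_\q$ the projections onto the two summands.

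\textbf{The $\lambda$-part.} Since $\lambda$ is $\sigma$-finite, pass to an equivalent probability measure $\tilde\lambda$. Then $X_\lambda$ sits isometrically in $L^1(\tilde\lambda)$ via Radon--Nikodym derivatives. I would check that $X_\lambda$ is all of $\mathrm{ca}(\lambda)$, or at least norm-dense in it; if $X_\lambda$ is only a proper subspace, I would instead extend $x'|_{X_\lambda}$ by Hahn--Banach. Either way, $L^1$--$L^\infty$ duality produces $f_\lambda\in L^\infty(\lambda)$ with $x'(\mu_\lambda)=\int f_\lambda\,d\mu_\lambda$.

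\textbf{The $\q$-part.} The family $\sconv(\q)$ is pre-Hahn-localizable with localization $\q$ and strictly disjoint supports, directly from assumption 1. Every $\mu_\q$ lies in $\mathrm{ca}(\sconv(\q))$, and conversely every element of $\mathrm{ca}(\sconv(\q))$ arises this way, since $\q\subseteq\pp$ and $\pp$ is convex. So $X_\q$ should equal $\mathrm{ca}(\sconv(\q))$, or at least be a closed sublattice on which one extends as above. Theorem \ref{thm:duality-wo-hahn-loc}, applied to the convex set $\sconv(\q)$, identifies $\mathrm{ca}(\f,\sconv(\q))'$ with $\LL(\hef,\sconv(\q)^\q)$ through the extension $\mu\mapsto\mu^\q$ of Theorem \ref{thm:hahn-extension-is-well-defined}. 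This yields $f_\q$ with $x'(\mu_\q)=\int f_\q\,d\mu_\q^\q$, and adding the two representations gives the claimed formula.

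\textbf{Main obstacle.} I expect the hard part to be showing that the two pieces are exactly the full spaces $\mathrm{ca}(\lambda)$ and $\mathrm{ca}(\sconv(\q))$, or else justifying the Hahn--Banach extension while keeping the representation faithful. A related subtlety is confirming that the decomposition is independent of $P$, so that the projections are well defined and linear; the Lebesgue-decomposition argument above is meant to handle this. The existence claim only requires some representing pair $(f_\lambda,f_\q)$, so the extension route is acceptable if the equalities fail.
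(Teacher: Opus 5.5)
Your argument is correct and is essentially the paper's. You split $\mu$ along a $\lambda$-null set carrying the $Q_k$, restrict $x'$ to the two pieces, and represent them via $L^{1}(\lambda)'\simeq L^{\infty}(\lambda)$ and Theorem~\ref{thm:duality-wo-hahn-loc}. The latter is applied to the convex family $\sconv(\q)$, which is pre-Hahn-localizable with strictly disjointly supported localization $\q$.

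Where you deviate, you are more careful than the paper, and both refinements should be kept.

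\textbf{Lebesgue decomposition.} Uniqueness of the Lebesgue decomposition is what makes $\mu\mapsto\mu_{\lambda}$ and $\mu\mapsto\mu_{\q}$ well defined and linear. The paper's sets $\Omega_{\lambda},\Omega_{\q}$ depend on the chosen $P$, and it does not address this.

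\textbf{Hahn--Banach.} The Hahn--Banach extension is not an optional fallback but necessary. The paper asserts that $\mathrm{ca}(\f,\lambda)$ and $\mathrm{ca}(\f,\sconv(\q))$ are subspaces of $\mathrm{ca}(\f,\pp)$, but the hypotheses do not give this. For instance, take $\Omega=(0,1)$, $\q=\{\delta_{x}\mid x\in\Omega\}$, $\pp=\co(\q)$ and $\lambda$ Lebesgue measure. Then $X_{\lambda}=\{0\}\neq\mathrm{ca}(\lambda)$. Also, $\sum_{k}2^{-k}\delta_{1/(k+1)}$ lies in $\mathrm{ca}(\sconv(\q))$ but not in $\mathrm{ca}(\pp)$. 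So your tentative claim that every element of $\mathrm{ca}(\sconv(\q))$ arises as some $\mu_{\q}$ is false, and so is $X_{\lambda}=\mathrm{ca}(\lambda)$.

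Since Hahn--Banach works on any linear subspace, you need no closedness or sublattice property of $X_{\lambda}$, $X_{\q}$. Extend $x'|_{X_{\lambda}}$ to $\mathrm{ca}(\lambda)$ and $x'|_{X_{\q}}$ to $\mathrm{ca}(\sconv(\q))$, represent the extensions, and evaluate them at $\mu_{\lambda}$ and $\mu_{\q}$. This gives the claimed formula.
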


\begin{proof}
We first show that any $\mu\in \mathrm{ca}(\pp)$ can be written as
\begin{equation}\label{eq:theorem:lambda+Q:decomposition-of-signed-measures}
  \mu  = \mu_{\lambda} + \mu_{\q},
\end{equation}
with $\mu_{\lambda}\ll \lambda$ and $\mu_{\q}\ll Q^{*}$ for some $Q^{*}\in \sconv(\q)$. 
Let $\mu\in \mathrm{ca}(\pp)$ and $P\in \pp$ with $\mu \ll P$. 
Let $P = P_{\lambda} + P_{\q}$ be the decomposition of $P$ according to the assumptions of the theorem, with $Q^{*}\in \sconv(\q)$ such that $P_{\q}\ll Q^{*}$. 
Then $P_{\lambda}$ and $P_{\q}$ are singular, and there are disjoint sets $\Omega_{\lambda}, \Omega_{\q}\in \f$ supporting $P_{\lambda}$ and $P_{\q}$ respectively. 
We set $\mu_{\lambda}(F) = \mu(F\cap \Omega_{\lambda})$ and $\mu_{\q}(F) = \mu(F\cap \Omega_{\q})$, and claim that $\mu_{\lambda}\ll \lambda$ and $\mu_{\q}\ll Q^{*}$. 
We will only show that $\mu_{\lambda}\ll \lambda$, since the proof for $\mu_{\q} \ll Q^{*}$ can be done in the same manner. 
Let $F\in \f$ with $\lambda(F) = 0$. 
Then $P_{\lambda}(F) = P(F\cap \Omega_{\lambda}) = 0$, and hence $\mu_{\lambda}(F) = \mu(F\cap \Omega_{\lambda}) = 0$. 

Now, let $x'\in \mathrm{ca}(\pp)'$, and note that $x'(\mu) = x'(\mu_{\lambda}) + x'(\mu_{\q})$, where $\mu = \mu_{\lambda} + \mu_{\q}$ as in \eqref{eq:theorem:lambda+Q:decomposition-of-signed-measures}. 
Since $\mathrm{ca}(\f,\lambda)$ is a (closed) subspace of $\mathrm{ca}(\f,\pp)$, $x'|_{\mathrm{ca}(\f,\lambda)}\in \mathrm{ca}(\f,\lambda)'$.
Since $\lambda$ is $\sigma$-finite, we get that $\mathrm{ca}(\lambda)$ is isometrically isomorphic to $L^{1}(\lambda)$.
As such, $L^{1}(\f,\lambda)' \simeq L^{\infty}(\f,\lambda)$, and there is a $f_{\lambda}\in L^{\infty}(\f,\lambda)$ such that 
\begin{equation*}
  x'(\mu_{\lambda}) = \int f_{\lambda}d \mu_{\lambda}.
\end{equation*}

On the other hand, $\mathrm{ca}(\f,\sconv(\q))$ is also a (closed) subspace of $\mathrm{ca}(\f,\pp)$, and $\mathrm{ca}(\f,\sconv(\q))$ does satisfy the assumptions of Theorem \ref{thm:duality-wo-hahn-loc} (with $\q\subseteq \sconv(\q)$), i.e.
\begin{equation*}
  \mathrm{ca}(\f,\sconv(\q))' \simeq \mathbb{L}^{\infty}(\f^{\sconv(\q)}, \sconv(\q)^{\q}).
\end{equation*}
Again, $x'|_{\mathrm{ca}(\f,\sconv(\q))}\in \mathrm{ca}(\f,\sconv(\q))'$ and there is a $f_{\q}\in  \mathbb{L}^{\infty}(\f^{\sconv(\q)}, \sconv(\q)^{\q})$ such that 
\begin{equation*}
  x'(\mu_{\q}) = \int f_{\q}d \mu_{\q}.
\end{equation*}
Indeed, looking at Definition \ref{def:supported-measures}, it is easy to see that  $\f^{\sconv(\q)} = \hef$. 
\end{proof}

\section{Characterization of strictly unbiased hypothesis}\label{sec:kraft_testable_hyp}
Let $(\Omega, \mathcal{F})$ be a measurable space, and let the null and alternative hypothesis $\mathcal{H}_{0}$ and $\mathcal{H}_{1}$ be two families of probability measures on $(\Omega, \mathcal{F})$. 
A test is a measurable function $\phi: \Omega \to \mathbb{R}$, such that $0 \leq \phi \leq 1$ $\mathcal{H}_{0}\cup \mathcal{H}_{1}$-q.s. 
The worst case type-I error and power of a test $\phi$ are given by $\sup_{H\in \mathcal{H}_{0}} \mathbb{E}_{H}( \phi )$ and $\inf_{H\in \mathcal{H}_{1}} \mathbb{E}_{H}( \phi )$. 
The risk of a test $\phi$ is given by the sum of the worst type-I and type-II errors, i.e., by
\begin{equation*}
  R(\phi) = R(\phi, \mathcal{H}_{0}, \mathcal{H}_{1}) = \sup_{\mu \in \mathcal{H}_{0}} \mathbb{E}_{\mu}( \phi ) +  \sup_{\nu \in \mathcal{H}_{1}} \mathbb{E}_{\nu}(1- \phi ).
\end{equation*}
A test $\phi$ is called strictly unbiased, if 
\begin{equation*}
  \sup_{H\in \mathcal{H}_{0}} \mathbb{E}_{H}( \phi ) < \inf_{H\in \mathcal{H}_{1}} \mathbb{E}_{H}( \phi ),
\end{equation*}
or equivalently, $R(\phi) <1$. 
Some argue \cite{larsson2026completecharacterizationtestablehypotheses} that strict unbiasedness is a ``reasonable minimum requirement'' for a test. 
In the case where $\mathcal{H}_{0}$ and $\mathcal{H}_{1}$ have a common dominating $\sigma$-finite measure, Kraft \cite{kraft} gave a full characterization of unbiased tests in terms of the total variation distance between $\co(\mathcal{H}_{0})$ and $\co(\mathcal{H}_{1})$, given by
\begin{equation*}
  \mathrm{d}_{TV}(\mu, \nu) = \frac{1}{2}\|\mu - \nu\|_{TV}. 
\end{equation*}
\begin{theorem}[\cite{kraft}]\label{thm:kraft}
If $\mathcal{H}_{0}$ and $\mathcal{H}_{1}$ have a common $\sigma$-finite dominating measure $\rho$, then for every $\varepsilon>0$
\begin{equation*}
  \exists \text{ test }\phi\in L^{\infty}(\rho): \inf_{\nu\in \mathcal{H}_{1}}\mathbb{E}_{\nu}( \phi ) > \sup_{\mu\in \mathcal{H}_{0}} \mathbb{E}_{\mu}( \phi ) + \varepsilon \iff \mathrm{d}_{TV}(\co(\mathcal{H}_{0}), \co(\mathcal{H}_{1})) > \varepsilon. 
\end{equation*}
In fact, we have 
\begin{equation*}
  \inf_{\substack{\phi \in L^{\infty}(\rho)\\ 0 \leq \phi \leq 1 \rho\text{-a.s.}}} R(\phi) = 1- \mathrm{d}_{TV}(\co(\mathcal{H}_{0}), \co(\mathcal{H}_{1})). 
\end{equation*}
\end{theorem}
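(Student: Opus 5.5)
The plan is to prove the identity for the minimal risk first; the equivalence with $\varepsilon$ then follows from it. Write $\mathcal{C}_i = \co(\mathcal{H}_i)$. Since $\mathbb{E}_{\mu}(\phi)$ is affine in $\mu$, both the supremum over $\mathcal{H}_0$ and the infimum over $\mathcal{H}_1$ are unchanged when the hypotheses are replaced by their convex hulls. Hence
\begin{equation*}
  R(\phi) = 1 - \inf_{\mu\in \mathcal{C}_0,\,\nu\in \mathcal{C}_1} \int \phi\, d(\nu-\mu).
\end{equation*}

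The inequality $\inf_\phi R(\phi) \geq 1 - \mathrm{d}_{TV}(\mathcal{C}_0,\mathcal{C}_1)$ is elementary. For every test $\phi$ and every pair $\mu\in\mathcal{C}_0$, $\nu\in\mathcal{C}_1$, the Hahn decomposition of $\nu-\mu$ gives $\int \phi\, d(\nu-\mu) \leq (\nu-\mu)^{+}(\Omega) = \mathrm{d}_{TV}(\mu,\nu)$, because $(\nu-\mu)(\Omega)=0$ forces $(\nu-\mu)^{+}(\Omega) = (\nu-\mu)^{-}(\Omega)$. Taking the infimum over $(\mu,\nu)$ yields the bound.

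For the reverse inequality I will use a minimax argument built on $L^1$--$L^\infty$ duality with respect to $\rho$. Replace $\rho$ by an equivalent probability measure. Every element of $\mathcal{C}_0$ and $\mathcal{C}_1$ is then identified with its density in $L^1(\rho)$. Set $K = \{\phi\in L^\infty(\rho)\mid 0\leq \phi\leq 1\}$. This set is convex and weak$^*$-compact by Banach--Alaoglu, since $L^\infty(\rho) = L^1(\rho)'$ for $\sigma$-finite $\rho$. The set $\mathcal{C}_1 - \mathcal{C}_0\subseteq L^1(\rho)$ is convex. The pairing $(\phi,\eta)\mapsto\int\phi\,\eta\,d\rho$ is bilinear and weak$^*$-continuous in $\phi$ for each fixed $\eta$. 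Sion's (or Fan's) minimax theorem therefore gives
\begin{equation*}
  \sup_{\phi\in K}\inf_{\eta\in \mathcal{C}_1-\mathcal{C}_0}\int \phi\,\eta\,d\rho = \inf_{\eta}\sup_{\phi\in K}\int \phi\,\eta\,d\rho = \inf_{\eta} \int \eta^{+}d\rho = \mathrm{d}_{TV}(\mathcal{C}_0,\mathcal{C}_1).
\end{equation*}
The maximiser for fixed $\eta$ is $\phi=\ind_{\{\eta>0\}}$, and $\int\eta^+d\rho = \tfrac12\|\eta\|_{L^1}$ because $\int\eta\,d\rho=0$. Substituting into the expression for $R$ gives the stated identity.

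The $\varepsilon$-equivalence then reads directly off $\sup_\phi \inf_{\mu,\nu}\int\phi\,d(\nu-\mu) = \mathrm{d}_{TV}$. If some test achieves a gap strictly larger than $\varepsilon$, then $\mathrm{d}_{TV}>\varepsilon$ by the elementary inequality. Conversely, if $\mathrm{d}_{TV}>\varepsilon$, the supremum exceeds $\varepsilon$, so some $\phi\in K$ already does.

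The main obstacle is the minimax step. It needs compactness of one side, and that is exactly where $L^\infty(\rho)=L^1(\rho)'$ and the weak$^*$ topology enter. I would check carefully that $K$ is weak$^*$-compact and that the objective has the required semicontinuity and convexity in each variable. No compactness of $\mathcal{C}_0$ or $\mathcal{C}_1$ is needed, since Sion's theorem requires compactness of only one of the two sets.
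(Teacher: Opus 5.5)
Your proof is correct. The elementary bound, the weak$^*$-compactness of $\{0\leq\phi\leq 1\}$ via $L^1(\rho)'=L^\infty(\rho)$, Sion's theorem with the tests as the compact side, and the final reading-off of the $\varepsilon$-equivalence all go through. The paper does not reprove Kraft's theorem (it is cited from \cite{kraft}), but your argument is exactly the dominated-case specialization of the paper's proof of its robust analogue at the end of Section~\ref{sec:kraft_testable_hyp}, with $L^1(\rho)'=L^\infty(\rho)$ playing the role that $\mathrm{ca}(\pp)'\simeq\LL(\pp)$ plays there.
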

In the case of non-dominated families of hypothesis, there exist two main approaches.
Le Cam \cite{lecam} suggested so-called ``generalized tests''.
A generalized test is an element $\phi\in \mathrm{ca}'$, such that $0\leq \phi(\mu)\leq 1$ for every $\mu\in \mathrm{ca}_{1}$, where $\mathrm{ca}$ denotes the set of all finite signed measures on $(\Omega, \mathcal{F})$ and $\mathrm{ca}_{1} \subseteq \mathrm{ca}$ denotes the set of probability measures. 
The existence of a strictly unbiased generalized test has an analogous characterization as the one provided in Theorem \ref{thm:kraft}. 
\begin{theorem}[\cite{lecam}]
  Let $\mathcal{H}_{0}$ and $\mathcal{H}_{1}$ be arbitrary families of probability measures. Then for every $\varepsilon>0$
  \begin{equation*}
    \exists \text{ generalized test }\phi: \inf_{\nu\in \mathcal{H}_{1}}\mathbb{E}_{\nu}( \phi ) > \sup_{\mu\in \mathcal{H}_{0}} \mathbb{E}_{\mu}( \phi ) + \varepsilon \iff \mathrm{d}_{TV}(\co(\mathcal{H}_{0}), \co(\mathcal{H}_{1})) > \varepsilon.
\end{equation*}
In fact, we have 
\begin{equation*}
  \inf_{\substack{\phi \in \mathrm{ca}'\\ \forall \mu\in \mathrm{ca}_{1}: 0 \leq \phi(\mu)\leq 1}} R(\phi) = 1- \mathrm{d}_{TV}(\co(\mathcal{H}_{0}), \co(\mathcal{H}_{1})). 
\end{equation*}
\end{theorem}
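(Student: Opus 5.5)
We prove Le Cam's theorem by a separation (minimax) argument in the Banach space $\mathrm{ca}$ with total variation norm and its dual $\mathrm{ca}'$. First we reduce the risk identity to the equivalence. For any generalized test $\phi$, $\phi$ is linear on $\mathrm{ca}$ and takes values in $[0,1]$ on $\mathrm{ca}_{1}$. Hence $\sup_{\mu\in\mathcal{H}_0}\phi(\mu)=\sup_{\mu\in\co(\mathcal{H}_0)}\phi(\mu)$, and likewise $\inf_{\nu\in \mathcal{H}_1}\phi(\nu) = \inf_{\nu\in\co(\mathcal{H}_1)}\phi(\nu)$. So $1-R(\phi)=\inf_{\nu,\mu}\phi(\nu-\mu)$, with $\mu,\nu$ ranging over the convex hulls. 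Thus it suffices to prove
\begin{equation*}
  \sup_{\phi}\inf_{\mu\in\co(\mathcal{H}_0),\,\nu\in\co(\mathcal{H}_1)}\phi(\nu-\mu)=\mathrm{d}_{TV}(\co(\mathcal{H}_0),\co(\mathcal{H}_1)),
\end{equation*}
where $\phi$ ranges over generalized tests. The $\varepsilon$-equivalence then follows, with strictness handled by choosing $\phi$ nearly optimal and noting the supremum need not be attained.

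The easy inequality ``$\leq$'' goes as follows. For $\mu,\nu$ probability measures, $\nu-\mu$ has total mass zero. Write $\nu-\mu=\frac12\|\nu-\mu\|_{TV}(\alpha-\beta)$ with $\alpha,\beta\in\mathrm{ca}_1$, obtained from the Jordan decomposition normalized by $(\nu-\mu)^{+}(\Omega)=\mathrm{d}_{TV}(\mu,\nu)$. Then $\phi(\nu-\mu)=\mathrm{d}_{TV}(\mu,\nu)(\phi(\alpha)-\phi(\beta))\le \mathrm{d}_{TV}(\mu,\nu)$, and taking the infimum over $\mu,\nu$ gives the bound.

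For ``$\geq$'', set $d=\mathrm{d}_{TV}(\co(\mathcal{H}_0),\co(\mathcal{H}_1))$ and $K=\co(\mathcal{H}_1)-\co(\mathcal{H}_0)$. The set $K$ is convex, and every element of $K$ has norm $\ge 2d$. Let $B$ be the open TV-ball of radius $2d$ in the closed subspace $\mathrm{ca}_0=\{\sigma:\sigma(\Omega)=0\}$. Then $K\cap B=\varnothing$. By Hahn--Banach separation in $\mathrm{ca}_0$ there is a functional $\psi$ with $\|\psi\|\le 1$ and $\psi\ge 2d$ on $K$; if $d=0$ take $\psi=0$. Extend $\psi$ to $\mathrm{ca}$ with the same norm by Hahn--Banach.

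The main obstacle is turning $\psi$ into a generalized test. Set $\phi=\frac12(\psi+c\,\mathbf 1)$, where $\mathbf 1(\sigma)=\sigma(\Omega)$ and the constant $c$ is chosen so that $0\le\phi\le1$ on $\mathrm{ca}_1$. Since $|\psi(\alpha)|\le\|\alpha\|=1$ for $\alpha\in\mathrm{ca}_1$, the choice $c=1$ gives $\phi(\alpha)=\frac12(\psi(\alpha)+1)\in[0,1]$. Moreover $\phi(\nu-\mu)=\frac12\psi(\nu-\mu)\ge d$ on $K$, because $\mathbf 1$ vanishes on $\mathrm{ca}_0$. The factor $\frac12$ exactly matches $\mathrm{d}_{TV}=\frac12\|\cdot\|_{TV}$, which confirms the normalization. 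This yields the reverse inequality, hence both displayed identities.
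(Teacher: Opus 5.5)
Your proof is correct. The paper does not prove this statement; it quotes it from Le Cam. The closest argument in the paper is its proof of the pre-Hahn-localizable analogue, which runs through Sion's minimax theorem. That proof uses two facts: the set of tests is weak*-compact (by Banach--Alaoglu, available there because of the duality $\mathrm{ca}(\pp)'\simeq\LL(\pp)$), and $\mathrm{d}_{TV}(\mu,\nu)=\sup_{\phi}\bigl(\mathbb{E}_{\nu}(\phi)-\mathbb{E}_{\mu}(\phi)\bigr)$. Moved to Le Cam's setting, that route would require you to note that generalized tests form a weak*-closed subset of the unit ball of $\mathrm{ca}'$, since $|\phi(\sigma)|\le\sigma^{+}(\Omega)+\sigma^{-}(\Omega)=\|\sigma\|_{TV}$, and then to invoke a minimax theorem.

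You instead use a single Hahn--Banach separation of the convex set $\co(\mathcal{H}_{1})-\co(\mathcal{H}_{0})$ from the open ball of radius $2d$, followed by the affine correction $\phi=\frac12(\psi+\mathbf{1})$. This is more elementary and needs neither compactness nor a minimax theorem. It also produces an explicit optimal generalized test, so the supremum is in fact attained; your remark that it ``need not be attained'' is unnecessary.

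The minimax formulation has the advantage of being phrased directly in terms of tests. Your argument would nevertheless also carry over to the paper's setting. Separating in $\mathrm{ca}(\pp)$ and shifting by $\frac12\mathbf{1}$ gives a functional that the duality theorem represents by some $f\in\LL(\pp)$ with $0\le f\le 1$ $\pp$-q.s. In both routes the duality is the essential input.

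Two cosmetic points:
\begin{enumerate}
\item The detour through $\mathrm{ca}_{0}$ and the subsequent extension are unnecessary. You can separate $K$ from the open ball of radius $2d$ in $\mathrm{ca}$ directly.
\item You should normalize to $\|\psi\|=1$ exactly rather than $\|\psi\|\le 1$. It is the identity $\sup_{B}\psi=2d\|\psi\|$ that yields $\psi\ge 2d$ on $K$.
\end{enumerate}
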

As has already been pointed out by Le Cam, a generalized test may (in general) not correspond to a measurable function, and it is a priori unclear how a generalized test is to be computed in practice. 

The second approach to non-dominated families of hypothesis is due to \cite{larsson2026completecharacterizationtestablehypotheses}. 
The results in \cite{larsson2026completecharacterizationtestablehypotheses} avoid generalized tests.
As a price, their characterizations are formulated in terms of non-$\sigma$-additive sets functions. 

Let $\mathrm{ba}$ be the set of finitely additive set functions on $(\Omega, \mathcal{F})$, and denote with $\Phi_{0} = \{f: \Omega\to [0,1]\mid f \text{ measurable}\}$. 
The space $\mathrm{ba}$ is the dual space of the space of bounded, real-valued, $\mathcal{F}$-measurable functions, equipped with the supremum norm. We denote with $\overline{\co}^{*}(A)$ the closure of a set $A \subseteq \mathrm{ba}$ in the weak* topology. 
In \cite{larsson2026completecharacterizationtestablehypotheses}, the following theorem is established, as a robust generalization of Theorem \ref{thm:kraft}. 
\begin{theorem}\label{thm:larsson}
  Let $\mathcal{H}_{0}$ and $\mathcal{H}_{1}$ be arbitrary sets of probability measures, then for every $\varepsilon>0$
  \begin{equation*}
    \exists \text{ test }\phi \in \Phi_{0}: \inf_{\nu\in \mathcal{H}_{1}}\mathbb{E}_{\nu}( \phi ) > \sup_{\mu\in \mathcal{H}_{0}} \mathbb{E}_{\mu}( \phi ) + \varepsilon \iff \mathrm{d}_{TV}(\overline{\co}^{*}(\mathcal{H}_{0}), \overline{\co}^{*}(\mathcal{H}_{1})) > \varepsilon. 
\end{equation*}
In fact, we have 
\begin{equation*}
  \inf_{\substack{\phi \text{ meas.}\\ 0 \leq \phi \leq 1}} R(\phi) = 1- \mathrm{d}_{TV}(\overline{\co}^{*}(\mathcal{H}_{0}), \overline{\co}^{*}(\mathcal{H}_{1})). 
\end{equation*}
\end{theorem}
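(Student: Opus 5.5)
We would treat this as a minimax problem in the duality between the bounded $\mathcal{F}$-measurable functions $B(\mathcal{F})$ with the supremum norm and $\mathrm{ba}=B(\mathcal{F})'$. The relevant sets are $\Phi_{0}\subseteq B(\mathcal{F})$ and $C_{i}=\overline{\co}^{*}(\mathcal{H}_{i})\subseteq \mathrm{ba}$ for $i=0,1$.

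\emph{Step 1: reduction to the weak* closures.} For a fixed test $\phi\in\Phi_{0}$, the map $\mu\mapsto \mu(\phi)$ is linear and weak*-continuous on $\mathrm{ba}$. Hence
\begin{equation*}
  \sup_{\mu\in\mathcal{H}_{0}}\mathbb{E}_{\mu}(\phi)=\sup_{\mu\in C_{0}}\mu(\phi), \qquad \sup_{\nu\in\mathcal{H}_{1}}\mathbb{E}_{\nu}(1-\phi)=\sup_{\nu\in C_{1}}\nu(1-\phi).
\end{equation*}
It follows that
\begin{equation*}
  R(\phi)=\sup_{(\mu,\nu)\in C_{0}\times C_{1}} \bigl(1+(\mu-\nu)(\phi)\bigr),
\end{equation*}
where we use $\nu(1)=1$; this identity survives weak* limits. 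The sets $C_{0},C_{1}$ are convex and bounded in norm, since they consist of positive normalized finitely additive set functions. By Banach--Alaoglu they are weak* compact.

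\emph{Step 2: minimax.} Consider $f(\phi,(\mu,\nu))=1+(\mu-\nu)(\phi)$ on $\Phi_{0}\times(C_{0}\times C_{1})$. It is affine in $\phi$ and affine and weak*-continuous in $(\mu,\nu)$. The set $\Phi_{0}$ is convex, and $C_{0}\times C_{1}$ is convex and compact. Sion's minimax theorem therefore applies, with compactness needed only on the measure side, and gives
\begin{equation*}
  \inf_{\phi\in\Phi_{0}}R(\phi)=\sup_{(\mu,\nu)\in C_{0}\times C_{1}}\ \inf_{\phi\in\Phi_{0}}\bigl(1+(\mu-\nu)(\phi)\bigr).
\end{equation*}
We expect checking the hypotheses here to be the main point of care. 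In particular, we must use the version of Sion's theorem in which only one side is compact, and we must verify continuity in the weak* topology on that compact side.

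\emph{Step 3: the inner infimum.} For $\eta=\mu-\nu\in\mathrm{ba}$ with $\eta(\Omega)=0$ we will show
\begin{equation*}
  \inf_{\phi\in\Phi_{0}}\eta(\phi)=-\sup_{A\in\mathcal{F}}\eta(A)=-\tfrac12\|\eta\|_{TV}=-\mathrm{d}_{TV}(\mu,\nu).
\end{equation*}
Indicators lie in $\Phi_{0}$, which gives one inequality. For the other, a simple $\phi\in\Phi_{0}$ is a convex combination of indicators of nested level sets, so $\eta(\phi)\geq -\sup_{A}\eta(A)$. General $\phi\in\Phi_{0}$ are uniform limits of simple ones, and $\eta$ is norm continuous, which gives the inequality for all tests. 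The identity $\sup_{A}\eta(A)=\tfrac12\|\eta\|_{TV}$ for $\eta(\Omega)=0$ follows from the Jordan decomposition in $\mathrm{ba}$. Plugging in, we get
\begin{equation*}
  \inf_{\phi}R(\phi)=1-\inf_{\mu\in C_{0},\,\nu\in C_{1}}\mathrm{d}_{TV}(\mu,\nu)=1-\mathrm{d}_{TV}(C_{0},C_{1}).
\end{equation*}

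\emph{Step 4: the $\varepsilon$-equivalence.} By definition,
\begin{equation*}
  R(\phi)=1+\sup_{\mathcal{H}_{0}}\mathbb{E}(\phi)-\inf_{\mathcal{H}_{1}}\mathbb{E}(\phi),
\end{equation*}
so a test has gap larger than $\varepsilon$ if and only if $R(\phi)<1-\varepsilon$.
\begin{enumerate}
\item If $\mathrm{d}_{TV}(C_{0},C_{1})>\varepsilon$, then $\inf_{\phi}R(\phi)<1-\varepsilon$, so some $\phi\in\Phi_{0}$ satisfies $R(\phi)<1-\varepsilon$. This direction needs no attainment of the infimum.
\item Conversely, if such a $\phi$ exists, then $1-\mathrm{d}_{TV}(C_{0},C_{1})\leq R(\phi)<1-\varepsilon$.
\end{enumerate}
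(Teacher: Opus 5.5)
Your proof is correct, and it follows the route the paper attributes to Larsson et al.\ (the paper itself only cites this theorem without proof): Sion's minimax theorem in the $B(\mathcal{F})$--$\mathrm{ba}$ duality, with compactness on the measure side obtained from the weak*-closed convex hulls $\overline{\co}^{*}(\mathcal{H}_{i})$, followed by the identity $\inf_{\phi\in\Phi_{0}}(\mu-\nu)(\phi)=-\mathrm{d}_{TV}(\mu,\nu)$ in $\mathrm{ba}$. The paper's own analogous theorem is the mirror image: there the compactness sits on the test side, coming from $\mathrm{ca}(\pp)'\simeq\LL(\pp)$, so $\co(\mathcal{H}_{i})$ never needs to be closed.
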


If one is aiming for the utmost generality, i.e., arbitrary sets $\mathcal{H}_{0}$ and $\mathcal{H}_{1}$, then the examples in \cite{larsson2026completecharacterizationtestablehypotheses} suggest that passing to non-$\sigma$-additive set functions is somewhat unavoidable. 
However, using the theory developed in this paper, we can show that for a large class of non-dominated models, neither generalized tests, nor non-$\sigma$-additive measures are necessary.  
As in \cite{larsson2026completecharacterizationtestablehypotheses}, we apply a minimax theorem. 
\begin{theorem}[Sion's 
Minimax Theorem, \cite{sion}]\label{thm:sion}
  Let $T$ and $S$ be topological vector spaces, $X \subseteq T$ convex and $Y \subseteq S$ convex and compact. Assume that $f: X \times Y \to \mathbb{R}$ satisfies
  \begin{enumerate}
  \item $f(\cdot, y)$ is upper semicontinuous and quasi-concave on $X$, for every fixed $y\in Y$, and 
  \item $f(x, \cdot)$ is lower semicontinuous and quasi-convex on $Y$, for every fixed $x\in X$.
  \end{enumerate}
  Then we have
  \begin{equation*}
 \sup _{x\in X}\newinf_{y\in Y}f(x,y)=\newinf _{y\in Y}\sup _{x\in X}f(x,y).
  \end{equation*}

\end{theorem}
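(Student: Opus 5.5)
We would follow Komiya's elementary proof of Sion's theorem, which uses only connectedness of intervals and compactness of $Y$. The inequality $\sup_{x\in X}\inf_{y\in Y}f(x,y)\le \inf_{y\in Y}\sup_{x\in X}f(x,y)$ is trivial, since $\inf_{y}f(x,y)\le f(x,y')\le \sup_{x'}f(x',y')$ for all $x,y'$. For the reverse inequality we argue by contradiction. Suppose there is a real $a$ with
\begin{equation*}
\sup_{x\in X}\newinf_{y\in Y}f(x,y)<a<\newinf_{y\in Y}\sup_{x\in X}f(x,y).
\end{equation*}
For each $x\in X$ the set $\{y\in Y\mid f(x,y)\le a'\}$ is closed by lower semicontinuity of $f(x,\cdot)$. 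For $a'<a$ close to $a$ these closed sets have empty intersection, because $\inf_y\sup_x f(x,y) > a$ means no single $y$ satisfies $f(x,y)\le a'$ for all $x$. Since $Y$ is compact, finitely many $x_1,\dots,x_n$ already give $\min_{y\in Y}\max_{i}f(x_i,y)>a'$. So it suffices to prove the finite statement: for finitely many $x_1,\dots,x_n\in X$ and any $b$ with $\min_{y}\max_i f(x_i,y)>b$ there is $x_0\in X$ with $\min_{y} f(x_0,y)>b$. Taking $b=a'$ with $a'>\sup_x\inf_y f(x,y)$ (still below $a$) then gives the contradiction.

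The core is the two-point lemma. Let $x_1,x_2\in X$ and suppose $\min_y\max(f(x_1,y),f(x_2,y))>b$. We claim some $x_0$ on the segment $[x_1,x_2]$ has $\min_y f(x_0,y)>b$. Suppose not, and pick $c$ with $b<c<\min_y\max_i f(x_i,y)$. For each $z\in[x_1,x_2]$ set $A=\{y\mid f(x_1,y)\le c\}$, $B=\{y\mid f(x_2,y)\le c\}$ and $C_z=\{y\mid f(z,y)\le c\}$. These sets have the following properties.
\begin{enumerate}
\item They are closed and convex, by lower semicontinuity and quasi-convexity in $y$.
\item $A\cap B=\varnothing$, by the choice of $c$.
\item Each $C_z$ is nonempty, since $\min_y f(z,y)\le b<c$.
\item $C_z\subseteq A\cup B$, by quasi-concavity in $x$, because $f(z,y)\ge\min(f(x_1,y),f(x_2,y))$.
\end{enumerate}
Since $C_z$ is convex, hence connected, it lies entirely in $A$ or entirely in $B$. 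Define $I=\{z\mid C_z\subseteq A\}$ and $J=\{z\mid C_z\subseteq B\}$. These partition the segment, with $x_1\in I$ and $x_2\in J$.

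We then show that $I$ and $J$ are both open in the segment, which contradicts connectedness of the segment. This openness is the main technical obstacle, and we expect it to need a shrinking-level trick. Take $z\in I$ and pick $y\in C_z$ with $f(z,y)<c$; this is possible if we work with $b<c'<c$ and use $\min_y f(z,y)\le b$. By upper semicontinuity of $f(\cdot,y)$, nearby $z'$ also satisfy $f(z',y)<c$. Hence $y\in C_{z'}\cap A$, so $C_{z'}\subseteq A$, that is, $z'\in I$. The same argument shows $J$ is open.

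Finally, the $n$-point case follows by induction on $n$. Assume the claim for $n-1$ points and suppose it fails for $x_1,\dots,x_n$. Restrict $Y$ to the compact convex set $Y'=\{y\mid f(x_n,y)\le b'\}$ for suitable $b'$. On $Y'$ the induction hypothesis applies to $x_1,\dots,x_{n-1}$ and produces a point $x'$. The two-point lemma applied to $x'$ and $x_n$ on all of $Y$ then yields $x_0$, which completes the induction and the proof.
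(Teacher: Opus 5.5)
The paper gives no proof of this theorem. It is quoted from Sion~\cite{sion} and used as a black box in Section~\ref{sec:kraft_testable_hyp}, where the weak$^*$-compact set of tests $\Phi$ plays the role of $Y$.

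Your argument is Komiya's elementary proof, written in the present notation where the compact variable is $y$, and it is correct. All three stages go through:
\begin{enumerate}
\item the finite-intersection reduction to finitely many points $x_1,\dots,x_n$;
\item the two-point lemma: the dichotomy $C_z\subseteq A$ or $C_z\subseteq B$ comes from connectedness of convex sets, and $I$ and $J$ are open by upper semicontinuity of $f(\cdot,y)$ at a witness $y$ with $f(z,y)\le b<c$, so your auxiliary level $c'$ is unnecessary;
\item the induction over $Y'=\{y\mid f(x_n,y)\le b\}$, i.e.\ $b'=b$.
\end{enumerate}

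When you write it out, tighten three points. First, state the finite claim for every compact convex subset of $Y$, because the induction invokes it on $Y'$; if $Y'=\varnothing$, simply take $x_0=x_n$. Second, the strict inequality $\min_{y}\max(f(x',y),f(x_n,y))>b$ follows from the pointwise inequality only because a finite maximum of lower semicontinuous functions attains its minimum on the compact set $Y$; say so. Third, argue connectedness of the segment through the parametrization $t\mapsto(1-t)x_1+tx_2$ of $[0,1]$, along which $f(\cdot,y)$ stays upper semicontinuous.

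Compared with the cited result, your route is self-contained: it uses only connectedness of intervals and compactness of $Y$. To my recollection, Sion's original proof instead goes through a Knaster--Kuratowski--Mazurkiewicz-type covering lemma. For the paper either suffices, since only the conclusion is used.
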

However, there is an important difference: in \cite{larsson2026completecharacterizationtestablehypotheses} the weak*-compactness is imposed on the measures $\co(\mathcal{H}_{0})$ and $\co(\mathcal{H}_{1})$ by passing to the weak*-closure in $\mathrm{ba}$. 
We exploit that the set of tests itself is already weak*-compact, if $\mathrm{ca}(\pp)' \simeq \LL(\pp)$, resulting in the following theorem. 

\begin{theorem}
Let $\pp$ be a pre-Hahn-localizable family of probability measures on $(\Omega, \mathcal{F})$ with strictly disjoint support sets, and assume that $\mathcal{H}_{0},\mathcal{H}_{1} \lll \pp$. Then for every $\varepsilon>0$
\begin{equation*}
  \exists \text{ test }\phi \in \LL(\mathcal{H}_{\mathcal{F}}, \pp): \inf_{\nu\in \mathcal{H}_{1}}\mathbb{E}_{\nu}( \phi ) > \sup_{\mu\in \mathcal{H}_{0}} \mathbb{E}_{\mu}( \phi ) + \varepsilon \iff \mathrm{d}_{TV}(\co(\mathcal{H}_{0}), \co(\mathcal{H}_{1})) > \varepsilon. 
\end{equation*}

\end{theorem}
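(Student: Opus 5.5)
We prove the risk identity
\begin{equation*}
  \inf_{\phi} R(\phi) = 1 - \mathrm{d}_{TV}(\co(\mathcal{H}_{0}),\co(\mathcal{H}_{1})),
\end{equation*}
where the infimum runs over tests $\phi$ in $\LL(\mathcal{H}_{\mathcal{F}}^{\q},\pp^{\q})$ with $0\le\phi\le 1$ $\pp$-q.s.; the stated equivalence for each $\varepsilon$ then follows. The setup is as follows. We may replace $\pp$ by $\sconv(\pp)$, since $\mathcal{H}_0,\mathcal{H}_1\lll\pp$ is preserved and the Hahn-extension depends only on $\q$ and the supports. By Theorem~\ref{thm:hahn-extension-is-well-defined} every $\mu\in\mathrm{ca}(\pp)$, in particular every element of $\co(\mathcal{H}_0)$ and $\co(\mathcal{H}_1)$, extends uniquely to $\mu^{\q}$ on $\hef$. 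Because $J$ in Theorem~\ref{thm:duality-wo-hahn-loc} is an isometry, $\mathrm{d}_{TV}$ is unchanged by this extension. Theorem~\ref{thm:duality-wo-hahn-loc} also gives $\mathrm{ca}(\f,\pp)'\simeq \LL(\hef,\pq)$ via $\phi\mapsto(\mu\mapsto\int\phi\,d\mu^{\q})$. We equip $\LL$ with the resulting weak* topology $\sigma(\LL,\mathrm{ca}(\pp))$.

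\emph{Easy direction.} If a test $\phi$ satisfies $\inf_{\mathcal{H}_1}\mathbb{E}_\nu\phi>\sup_{\mathcal{H}_0}\mathbb{E}_\mu\phi+\varepsilon$, the same inequality holds for all $\mu\in\co(\mathcal{H}_0)$ and $\nu\in\co(\mathcal{H}_1)$ by linearity. Since $0\le\phi\le1$, we have $\int\phi\,d(\nu-\mu)\le \mathrm{d}_{TV}(\mu,\nu)$: split $\nu-\mu$ into its positive and negative parts, which have equal mass $\tfrac12\|\nu-\mu\|$. Taking the infimum gives $\mathrm{d}_{TV}(\co\mathcal{H}_0,\co\mathcal{H}_1)\ge \inf_{\mu,\nu}\int\phi\, d(\nu-\mu)>\varepsilon$. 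The strict inequality survives because the left-hand gap is strictly larger than $\varepsilon$ uniformly. This direction needs only $\sigma$-additivity and no duality.

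\emph{Hard direction.} We apply Sion's Theorem~\ref{thm:sion} with the following data:
\begin{itemize}
  \item $Y=\{\phi\in\LL: 0\le\phi\le1\ \pp\text{-q.s.}\}$;
  \item $X=\co(\mathcal{H}_0)\times\co(\mathcal{H}_1)\subseteq\mathrm{ca}(\pp)^2$, with the norm topology;
  \item $f((\mu,\nu),\phi)=\int\phi\,d\mu^{\q}+\int(1-\phi)\,d\nu^{\q}$.
\end{itemize}
The set $Y$ is convex. It is weak* compact by Banach--Alaoglu, because it is the intersection of the unit-scale ball around $\tfrac12$ with the weak*-closed positivity constraints $\int\phi\,d\mu\ge0$ and $\int(1-\phi)\,d\mu\ge0$ for $\mu\ge0$ in $\mathrm{ca}(\pp)$. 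These constraints characterize q.s. bounds, by the argument in Lemma~\ref{lemma:h_in_Linft} using measures with density $\ind_A$ with respect to some $P$. The function $f$ is bilinear. It is weak* continuous in $\phi$ for fixed $(\mu,\nu)$, by the definition of the weak* topology; it is norm continuous in $(\mu,\nu)$ for fixed $\phi$, since $\|\phi\|\le1$. Sion then gives
\begin{equation*}
  \inf_{\phi\in Y}\sup_{X} f=\sup_{X}\inf_{\phi\in Y} f .
\end{equation*}
The left side is $\inf_\phi R(\phi)$, because the supremum of a linear functional over a convex hull equals the supremum over the generating set.

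For the right side, fix $(\mu,\nu)$. We have $\inf_\phi f=1-\sup_\phi\int\phi\,d(\nu-\mu)$. The supremum equals $(\nu-\mu)^+(\Omega)=\mathrm{d}_{TV}(\mu,\nu)$, attained by $\phi=\ind_{D}$ with $D\in\f$ a Hahn set of $\nu-\mu$. This $\phi$ already lies in $\f\subseteq\hef$ and satisfies $0\le\phi\le1$. Hence
\begin{equation*}
  \inf_{\phi\in Y}R(\phi)=1-\inf_{X}\mathrm{d}_{TV}(\mu,\nu)=1-\mathrm{d}_{TV}(\co\mathcal{H}_0,\co\mathcal{H}_1).
\end{equation*}
Now suppose $\mathrm{d}_{TV}>\varepsilon$. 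Then $\inf R<1-\varepsilon$, so there is a test with $R(\phi)<1-\varepsilon$, which is exactly $\inf_{\mathcal{H}_1}\mathbb{E}_\nu\phi>\sup_{\mathcal{H}_0}\mathbb{E}_\mu\phi+\varepsilon$. The infimum is in fact attained, by compactness and lower semicontinuity of $\sup_X f(\cdot,\phi)$, but attainment is not needed.

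The main obstacle is the compactness step. We must check that the order interval $[0,1]$ in $\LL(\hef,\pq)$ is weak* closed, which requires that testing against positive elements of $\mathrm{ca}(\pp)$ detects $\pp$-q.s. negativity on the extended $\sigma$-algebra. For this we would use Lemma~\ref{lemma:hahn-extension:absolute-continuity} and the fact that $\hef$-sets are locally in $\f$ (Lemma~\ref{lemma:hahn-closure}), so that $P^{\q}(\phi<0)>0$ produces $\mu=P^{\q}|_{\{\phi<0\}}$ restricted to $\f$ with $\int\phi\,d\mu^{\q}<0$. We must also check that Sion applies with the roles of $X$ and $Y$ as stated, i.e. that $f(\cdot,\phi)$ is concave (here linear) in $(\mu,\nu)$ and $f((\mu,\nu),\cdot)$ is linear in $\phi$, with compactness on the $\phi$-side.
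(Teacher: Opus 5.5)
Your proposal is correct and follows essentially the same route as the paper: pass to the Hahn-extension and use $\mathrm{ca}(\f,\pp)'\simeq\LL(\hef,\pq)$ to get weak*-compactness of the set of tests. Then apply Sion's theorem with $X=\co(\mathcal{H}_0)\times\co(\mathcal{H}_1)$ to obtain $\inf_\phi R(\phi)=1-\mathrm{d}_{TV}(\co(\mathcal{H}_0),\co(\mathcal{H}_1))$.

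Your extra care is welcome. Passing to $\sconv(\pp)$ secures the convexity hypothesis of Theorem~\ref{thm:duality-wo-hahn-loc}, which the paper uses silently. Your check of weak*-closedness is also sound, though it can be bypassed. Since the identification is isometric, $\{\phi: 0\le\phi\le 1\ \pp\text{-q.s.}\}$ is exactly the closed $\LL$-ball of radius $\tfrac12$ around the constant $\tfrac12$, and so it is weak*-compact directly by Banach--Alaoglu.
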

\begin{proof}
  Using Lemma \ref{lemma:hahn-extension:absolute-continuity}, we can uniquely extend $\mathcal{H}_{0}$ and $\mathcal{H}_{1}$ 
    to the Hahn-extension $\mathcal{H}_{\mathcal{F}}$ of $\mathcal{F}$, while preserving the property that $\mathcal{H}_{0}, \mathcal{H}_{1}\lll \pp$. 
  Hence, we may assume without loss of generality, that $(\Omega, \mathcal{F}, \pp)$ is Hahn-localizable. 
  We denote with 
  \begin{equation*}
    \Phi = \{f\in \LL(\pp)\mid 0 \leq f \leq 1, \pp\text{-q.s.}\}
  \end{equation*}
  the set of tests. 
  Note that $\Phi$ is the intersection of the unit ball in $\LL(\pp)$ with the set
  \begin{equation*}
    \Phi_{+} = \{f\in \LL(\pp)\mid f \geq 0, \pp\text{-q.s.}\}. 
  \end{equation*}
  Since $\mathrm{ca}(\pp)'\simeq \LL(\pp)$ by Theorem \ref{thm:duality}, the unit ball is weak*-compact.
  Moreover, $\Phi_{+}$ is weak*-closed, and hence $\Phi$ is weak*-compact as the intersection of a closed with a compact set. 
  Recall that the total variation distance between two probability measurs $\mu, \nu\in \mathrm{ca}(\pp)$ is given by
  \begin{equation*}
    \mathrm{d}_{TV}(\mu,\nu) = \sup_{\phi\in \Phi}\mathbb{E}_{\nu}( \phi ) - \mathbb{E}_{\mu}( \phi ),
  \end{equation*}
  and hence
  \begin{equation*}
    \begin{split}
      \mathrm{d}_{TV}(\co(\mathcal{H}_{0}), \co(\mathcal{H}_{1})) & = \inf_{\substack{\mu \in \co(\mathcal{H}_{0})\\ \nu\in \co(\mathcal{H}_{1})}}\sup_{\phi\in \Phi}\mathbb{E}_{\nu}( \phi ) - \mathbb{E}_{\mu}( \phi ) \\
                                                                  & = - \sup_{\substack{\mu \in \co(\mathcal{H}_{0})\\ \nu\in \co(\mathcal{H}_{1})}}\inf_{\phi\in \Phi}\mathbb{E}_{\mu}( \phi ) - \mathbb{E}_{\nu}( \phi ),
    \end{split}
  \end{equation*}
  where the last manipulation took place to transform the problem into the setting of Sion's Minimax Theorem. 
 We apply Theorem \ref{thm:sion} with the following data: $T= \mathrm{ca}(\mathcal{H}_{0})\times \mathrm{ca}(\mathcal{H}_{1})$ (equipped with the product of the norm topology), $Y = \LL(\pp)$ (equipped with the weak* topology), $X = \co(\mathcal{H}_{0})\times \co(\mathcal{H}_{1})$, $Y = \Phi$, and $f((\mu,\nu), \phi) = \mathbb{E}_{\mu}( \phi ) - \mathbb{E}_{\nu}( \phi )$, which is linear and continuous in both its arguments.
  This yield
  \begin{equation*}
    \begin{split}
    \mathrm{d}_{TV}(\co(\mathcal{H}_{0}), \co(\mathcal{H}_{1})) & = - \inf_{\phi\in \Phi} \sup_{\substack{\mu \in \co(\mathcal{H}_{0})\\ \nu\in \co(\mathcal{H}_{1})}}\mathbb{E}_{\mu}( \phi ) - \mathbb{E}_{\nu}( \phi )\\
                                                                & = 1 - \inf_{\phi\in \Phi} \sup_{\substack{\mu \in \co(\mathcal{H}_{0})\\ \nu\in \co(\mathcal{H}_{1})}}\mathbb{E}_{\mu}( \phi ) - \mathbb{E}_{\nu}(1- \phi )\\
                                                                & = 1 - \inf_{\phi\in \Phi} \sup_{\substack{\mu \in \mathcal{H}_{0}\\ \nu\in \mathcal{H}_{1}}}\mathbb{E}_{\mu}( \phi ) - \mathbb{E}_{\nu}(1- \phi ) = 1 - \inf_{\phi\in \Phi}R(\phi). \\
    \end{split}
  \end{equation*}
  From this, the theorem follows immediately.
\end{proof}

The state of the literature is summerized in the following table: 
\begin{center}
  \begin{tabular}{cccc}
  \toprule
  Reference &Domination & Tests & Characterization\\
  \midrule
  Kraft \cite{kraft} & $\sigma$-finite measure $\rho$ & $0 \leq \phi \leq 1$ $\rho$-a.s. & $\co(\mathcal{H}_{0})$ vs. $\co(\mathcal{H}_{1})$\\
  Le Cam \cite{lecam} & arbitrary & generalized tests & $\co(\mathcal{H}_{0})$ vs. $\co(\mathcal{H}_{1})$\\
  Larsson et al. \cite{larsson2026completecharacterizationtestablehypotheses} & arbitrary & $0 \leq \phi \leq 1$ pointwise & $\overline{\co}^{*}(\mathcal{H}_{0})$ vs. $\overline{\co}^{*}(\mathcal{H}_{1})$ \\
  present work & pre-Hahn-loc. $\pp$ & $0\leq \phi \leq 1$ $\pp$-q.s. & $\co(\mathcal{H}_{0})$ vs. $\co(\mathcal{H}_{1})$\\
  \bottomrule
\end{tabular}
\end{center}
\section{Appendix: Some remarks on No Arbitrage in the robust binomial model and its Hahn extension}\label{binomNA} 
 For readers with a particular interest in mathematical finance we will comment on  no arbitrage in the robust binomial model as defined in Subsection~\ref{binom1} in the context of the Hahn-extension. We recall the robust no arbitrage condition NA($\mathcal{P}$) for a set of probability measures $\mathcal{P}$ of \cite{BN:15} as well as the classical no arbitrage condition NA($P$) for a fixed probability measure $P$. 
 We will give a short overview of the implications of the strictly supported alternative introduced in Definition~\ref{def_disj_supp}.  Observe that concerning arbitrage considerations the newly introduced set $\widetilde{\rrr}$ does not restrict the generality of these aspects, see Remark~\ref{NA_equal_r_tilde_r}. Recall  that in Subsection~\ref{binom1} the interest rate $r=0$ and that there is only one risky asset which  is $S=(S_0,S_1)$. By definition $S_0=1$ and $S_1$ is $\mathcal{F}$-measurable. As trading strategies are predictable, which means here $\mathcal{F}_0$-measurable, where $\mathcal{F}_0=\{\varnothing, \Omega\}$, we have that they are just constants $H=H_0\in\mathbb{R}$. Therefore everything only depends on the sign of $S_1-S_0$. Let us first give the definition of the classical NA condition for a fixed probability measure $P$.

\begin{mdef}\label{na_standard}
The market satisfies the condition NA$(P)$ if, for all $H\in\mathbb{R}$,
$H(S_1-S_{0})\geq 0\quad\text{${P}$-a.s.}\quad\text{implies}\quad H(S_1-S_{0})= 0\quad\text{${P}$-a.s.}$ This is obviously equivalent to the following: $P(S_1-S_0>0)>0$ if and only if $P(S_1-S_0<0)>0$.
\end{mdef}

Note that the classical binomial model with $r=0$, which can be described by a fixed measure $R\in\mathcal{R}$ such that $R\circ Y^{-1}=\pi\delta_{u}+(1-\pi)\delta_{d}$, satisfies the condition NA$(R)$ if and only if  $d<1<u$, see, e.g., \cite{shreve1}. 
The robust NA conditon is now a generalization to a set $\pp'$ replacing the specific $P$. By this replacement model uncertainty is introduced.

\begin{mdef}\label{na_qs}
For a set of probability measures $\pp'$ the one step robust binomial market  satisfies the condition NA$(\mathcal{P}')$ if, for all $H\in\mathbb{R}$,
$H(S_1-S_{0})\geq 0\quad\text{$\mathcal{P}'$-q.s.}\quad\text{implies}\quad H(S_1-S_{0})= 0\quad\text{$\mathcal{P}'$-q.s.}$ This is equivalent to the following: there exists $P\in\pp'$ with $P(S_1-S_0>0)>0$ if and only if there exists $P'\in\pp'$ such that  $P'(S_{1}-S_{0}<0)>0$.
\end{mdef}

Let us give the very easy proof of the claim in Definition~\ref{na_qs}. Indeed, assume that NA$(\mathcal{P}')$ holds and suppose that there would exist $P\in\pp'$ with $P(S_1-S_0>0)>0$ but $P'(S_1-S_0\geq 0)=1$ for all $P'\in\pp'$. Then this is an obvious arbitrage for any $H>0$, a contradicition. Assume on the other hand the condition on the set $\pp'$ is satisfied and there would be an arbitrage, i.e., an $H\in\mathbb{R}$ such that $P'(H(S_1-S_0)\geq 0)=1$, for all $P'\in\pp'$ and there would exist $P$ with $P(H(S_1-S_0)>0)>0$. If $H>0$ then $P(S_1-S_0>0)>0$, hence there exists $P'$ such that $P'(S_1-S_0<0)>0$, a contradiction. Analogously for $H<0$. 
\newline\newline
Note that under the assumptions on the parameters of the robust binomial model given in Subsection~\ref{binom1} the market satisfies NA($\mathcal{R}$) and therefore also NA($\pp$) and NA($\widetilde{\pp})$.  

In Lemma \ref{sigma_p_prop} below, we summarize these NA properties and the fact that the sets  $\mathcal{R}$ and $\widetilde{\pp}$ have the same polar sets.
  We give the straightforward proof of Lemma~\ref{sigma_p_prop}  for our case of one time period. Observe that the lemma is correct in the multiperiod case as well.

Indeed, if  the more involved definition of the corresponding set $\pp$ in \cite{blanch_carassus} is used,  Lemma~4.4 therein shows that NA holds for the corresponding more general sets $\mathcal{\rrr}$ and $\mathcal{\pp}$. It is easy to see that then NA also holds for  $\widetilde{\pp}=\sconv(\mathcal{P})$. Note that for $T=1$ the more general definition of $\mathcal{\pp}$ reduces exactly to our definition. 
The $\mathcal{R}$ is a supported alternative of $\mathcal{\pp}$ by Proposition~3.10 in \cite{liebrich-model-uncertainty:reverse-approach}, for the multiperiod case (hence, in particular it holds in the one period case).

\begin{lemma}\label{sigma_p_prop}
$\widetilde{\pp}$ is $\sigma$-convex, by definition, and has the same polar sets as $\mathcal{R}$. Thus $\widetilde{\pp}$ is of class (S) with supported alternative $\mathcal{R}$. The conditions $NA(\mathcal{R})$ and
$NA(\widetilde{\pp})$ are satisfied .    
\end{lemma}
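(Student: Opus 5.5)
The lemma has four claims: $\sigma$-convexity of $\widetilde{\pp}$, equality of polar sets with $\mathcal{R}$, the class (S) property with supported alternative $\mathcal{R}$, and the two no-arbitrage conditions. I will prove them in that order.

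\emph{$\sigma$-convexity.} This is immediate from \eqref{sigma_pp}. A countable convex combination of countable convex combinations of elements of $\mathcal{R}$ can be rearranged, since all weights are non-negative and absolutely summable, into a single countable convex combination of elements of $\mathcal{R}$.

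\emph{Polar sets.} Since $\mathcal{R}\subseteq\widetilde{\pp}$, every $\widetilde{\pp}$-polar is $\mathcal{R}$-polar. Conversely, let $N\in\mathcal{F}$ with $R(N)=0$ for all $R\in\mathcal{R}$. For $P=\sum_k\alpha_kR_k\in\widetilde{\pp}$ we get $P(N)=\sum_k\alpha_kR_k(N)=0$, so $N$ is $\widetilde{\pp}$-null. Hence $\mathcal{N}_{\widetilde{\pp}}=\mathcal{N}_{\mathcal{R}}$.

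\emph{Class (S).} By Definition \ref{def:supported-measures}, it remains to check that each $R\in\mathcal{R}$ is supported with respect to $\widetilde{\pp}$. Take $S_R=\{Y_1=u_R\}\cup\{Y_1=d_R\}$ as in \eqref{support_R}. Then $R(\compl{S_R})=0$. Suppose $R(N\cap S_R)=0$. Because $0<\pi_0\le\pi_R\le\Pi_0<1$, $R$ charges both atoms $\{Y_1=u_R\}$ and $\{Y_1=d_R\}$ with positive mass. (If $u_R=d_R$, the set $S_R$ is a single atom of mass one.) So $N\cap S_R$ contains neither point of $S_R$, i.e. $N\cap S_R=\varnothing$, which is trivially $\widetilde{\pp}$-polar. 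The only subtle point is that $\pp$-polarity must be taken with respect to $\widetilde{\pp}$. Since $N\cap S_R$ is empty, this is harmless.

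\emph{No arbitrage.} By the characterization in Definition \ref{na_qs}, and since $S_1-S_0=Y_1-1$, NA$(\mathcal{R})$ reduces to showing two things: some $R\in\mathcal{R}$ gives positive probability to $\{Y_1>1\}$, and some $R'\in\mathcal{R}$ gives positive probability to $\{Y_1<1\}$. By Assumption \ref{ass_blan_car_1step}, $d_0<1<U_0$. Choose $R$ with parameters $(U_0,d_0,\pi_0)\in E_0$. Then $R(Y_1=U_0)=\pi_0>0$ with $U_0>1$, and $R(Y_1=d_0)=1-\pi_0>0$ with $d_0<1$. So both conditions hold, even for a single $R$, and NA$(\mathcal{R})$ follows. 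The condition in Definition \ref{na_qs} depends only on the polar sets: the existence of $P$ with $P(A)>0$ is equivalent to $A$ being non-polar. Since $\widetilde{\pp}$ and $\mathcal{R}$ have the same polar sets, NA$(\widetilde{\pp})$ follows at once. No step is a real obstacle; the only care needed is noting that the NA criterion is a statement about polar sets, which is what transfers it from $\mathcal{R}$ to $\widetilde{\pp}$.
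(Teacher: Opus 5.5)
Your proof is correct and follows the paper's route: the same single witness $R$ with $R\circ Y_1^{-1}=\pi_0\delta_{U_0}+(1-\pi_0)\delta_{d_0}$ gives NA$(\mathcal{R})$, and equality of polar sets follows directly from $\widetilde{\pp}=\sconv(\mathcal{R})$. There are two minor deviations. First, you verify supportedness of each $R\in\mathcal{R}$ by hand, where the paper cites Liebrich et al.; your argument implicitly uses that $Y_1$ is bijective, so that $S_R$ consists of at most two points each charged by $R$, and you should say so explicitly. Second, you transfer NA to $\widetilde{\pp}$ via equality of polar sets rather than via the inclusion $\mathcal{R}\subset\widetilde{\pp}$, which in the paper works only because the witness $R$ already lies in $\mathcal{R}$.
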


\begin{proof}
That both sets of probability measures have the same polar sets is obvious as $\widetilde{\pp}=\co_{\sigma}(\rrr)$. 
That $\mathcal{\rrr}$ is supported follows from  \cite{liebrich-model-uncertainty:reverse-approach} as said above.
We will now show NA($\mathcal{R}$). Choose $R\in\mathcal{R}$ as follows: $R\circ Y^{-1}=\pi_0\delta_{U_0}+(1-\pi_0)\delta_{d_0}$. By assumption $0<\pi_0<1$, $U_0>1$ and $d_0<1$. Therefore $R\in\mathcal{R}$  and the market is a one period binomial model under $R$ with $d=d_0<1<U_0=u$, therefore $R(S_1-S_0>0)=R(S_1-S_0=U_0-1)=\pi_0>0$ and $R(S_1-S_0<0)=R(S_1-S_0=d_0-1)=1-\pi_0>0$. Therefore NA($\mathcal{P}'$) holds for $\mathcal{P}'=\mathcal{R}$ and, as $\mathcal{\rrr}\subset \mathcal{P}\subset\widetilde{\mathcal{P}}$ NA($\mathcal{P}'$) holds for $\mathcal{P}'=\mathcal{P}$ and $=\widetilde{\pp}$ as well. 
\end{proof}

In Subsection~\ref{binom1} we made the additional Assumption~\ref{addass_ud}. Observe that these additional parameter restrictions still allow for various combinations of no arbitrage and arbitrage for particular choices of measures $P$ in the sense of Definition~\ref{na_standard}. Indeed, the following Lemma~\ref{na_combinations} holds. Note that this lemma shows that the strong NA condition of  \cite{blanch_carassus} still does not hold, compare this to Lemma~4.4 of \cite{blanch_carassus}.

\begin{lemma}\label{na_combinations} Under Assumption~\ref{addass_ud} the condition NA$(\pp')$ holds for $\pp'=\rrr, \pp, \widetilde{\pp}$. However, the set $E_0$ can be chosen such that there exist measures $R\in\rrr$ for which NA$(R)$ is not satisfied.
\end{lemma}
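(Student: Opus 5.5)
The first assertion follows from results already proved, and the second I would prove with one explicit choice of parameters.

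For the first assertion, Assumption~\ref{addass_ud} contains all conditions of Assumption~\ref{ass_blan_car_1step}. Lemma~\ref{sigma_p_prop} only uses $0<\pi_0<1$ and $d_0<1<U_0$. It exhibits $R\in\rrr$ with $R\circ Y_1^{-1}=\pi_0\delta_{U_0}+(1-\pi_0)\delta_{d_0}$, which gives
\begin{equation*}
R(S_1-S_0>0)=\pi_0>0 \quad\text{and}\quad R(S_1-S_0<0)=1-\pi_0>0.
\end{equation*}
This yields NA$(\rrr)$ through the equivalent formulation in Definition~\ref{na_qs}. Since $\rrr\subset\pp\subset\widetilde{\pp}$, and the NA condition only asks for the existence of measures charging $\{S_1-S_0>0\}$ and $\{S_1-S_0<0\}$, the same witnessing measure transfers to $\pp$ and to $\widetilde{\pp}$. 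Conversely, if some measure in $\pp'$ charges one of the two sides, $R$ above charges the other. So NA$(\pp')$ holds for $\pp'=\rrr,\pp,\widetilde{\pp}$.

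For the second assertion I would give a concrete $E_0$. Take
\begin{equation*}
d_0=\tfrac12,\quad D_0=2,\quad u_0=\tfrac32,\quad U_0=3,\quad \pi_0=\tfrac14,\quad \Pi_0=\tfrac34.
\end{equation*}
These values satisfy the required conditions:
\begin{itemize}
\item $0<\pi_0\le\Pi_0<1$, $d_0\le D_0$ and $u_0\le U_0$;
\item $0<d_0<1<U_0$;
\item $d_0=\tfrac12<\min(u_0,D_0)=\tfrac32$ and $\max(u_0,D_0)=2<U_0=3$.
\end{itemize}
Now choose $(u,d,\pi)=(3,2,\tfrac12)\in E_0$ and let $R\in\rrr$ be the measure with $R\circ Y_1^{-1}=\tfrac12\delta_3+\tfrac12\delta_2$. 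Then $S_1-S_0=Y_1-1\in\{2,1\}$ holds $R$-a.s. Hence $R(S_1-S_0>0)=1$ while $R(S_1-S_0<0)=0$, so by Definition~\ref{na_standard} NA$(R)$ fails, and $H=1$ is an arbitrage under $R$.

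Since $D_0\ge u_0$ here, one could alternatively use the Dirac measure $\delta_a$ with $a\in[u_0,D_0]=[\tfrac32,2]$, which lies in $\rrr$ by Lemma~\ref{char_rrr}. I do not expect any real obstacle; the only care needed is verifying all parameter inequalities, which I have done above.
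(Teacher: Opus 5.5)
Your proof is correct and essentially the paper's: the first part reuses the witness $R\circ Y_1^{-1}=\pi_0\delta_{U_0}+(1-\pi_0)\delta_{d_0}$ from Lemma~\ref{sigma_p_prop}, and the second exhibits an admissible $E_0$ with some $R\in\rrr$ whose two support points lie on the same side of $1$. The only difference is the direction of the arbitrage. The paper takes $0<d_0<u_0\le D_0<1<U_0$ and $u\in[u_0,1]$, so a short position $H<0$ is an arbitrage. You take $D_0>1$ and $d=2>1$, so $H=1$ is one, and this works equally well.
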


\begin{proof} NA$(\pp')$ still holds for $\pp'=\rrr, \pp, \widetilde{\pp}$, see the proof of Lemma~\ref{sigma_p_prop}, the measure $R$ with $R\circ Y^{-1}=\pi_0\delta_{U_0}+(1-\pi_0)\delta_{d_0}$ is still in $\mathcal{R}$.

Now, choose $E_0$ such that, e.g., $0<d_0<u_0\leq D_0<1<U_0$, which is possible under Assumption~\ref{addass_ud}. Now choose any $d\in[d_0,D_0]$, $u\in[u_0,1]$, $\pi\in[\pi_0,\Pi_0]$. Obviously this choice is not unique. For the measure $R\in\rrr$ corresponding to $(u,d,\pi)$ we have that $0<d<1$ but $u\leq 1$. Hence there is an arbitrage in the classical binomial model given by $R$ with these parameters.
  
\end{proof}
\begin{remark}\label{NA_equal_r_tilde_r}
Recall the Hahn-extension from Subsection~\ref{binom1} where $\widetilde{\mathcal{F}} = \mathcal{H}_{\mathcal{F}}^{\widetilde{ \mathcal{R} }}$ and $\widetilde{ \pp }_{1}$ is the extension of $\widetilde{ \pp }$ to $\widetilde{\mathcal{F}}$ (as in Section \ref{sec:hahn_extension}). In Corollary~\ref{Hahnlocaliz_binom} we saw that
 $(\Omega, \widetilde{\mathcal{F}},\widetilde{ \pp }_{1})$ is Hahn-localizable, and $\LL(\widetilde{ \pp }_{1})$ satisfies the robust $L^{1}$-$L^{\infty}$ duality by Theorem \ref{thm:duality}. Let us now shortly remark that the Hahn extension does not change the robust NA condition: 
    observe that $\mathrm{NA}(\widetilde{\pp})$ holds if and only if $\mathrm{NA}(\widetilde{ \pp }_{1})$  holds.
        To see this, note that the events in Definition \ref{na_qs}  are already in $\mathcal{F}$, and hence the extended probability measures on $\widetilde{\mathcal{F}}$ agree with the original probability measures on $\mathcal{F}$. 
\end{remark}

\bibliographystyle{plain}
\bibliography{main}
\end{document}